\numberwithin{equation}{section}
\newtheorem{thm}{Theorem}[section]
\newtheorem{conj}[thm]{Conjecture}
\newtheorem{cor}[thm]{Corollary}
\newtheorem{lemma}[thm]{Lemma}
\newtheorem{prop}[thm]{Proposition}
\newtheorem{defn}[thm]{Definition}
\newtheorem{rmk}[thm]{Remark}
\newtheorem{example}[thm]{Example}
\newcommand{\Q}{\mathbb Q}
\begin{document}

\title[Taut Foliations, Positive 3-Braids, and the L-Space Conjecture]
{Taut Foliations, Positive 3-Braids, and the L-Space Conjecture}

\author{Siddhi Krishna}
\address{Department of Mathematics, Boston College\\ Chestnut Hill, MA 02467}
\email{siddhi.krishna@bc.edu}

\maketitle

\medskip

\noindent {\bf Abstract.} 
We construct taut foliations in every closed 3--manifold obtained by $r$--framed Dehn surgery along a positive 3--braid knot $K$ in $S^3$, where $r < 2g(K)-1$ and $g(K)$ denotes the Seifert genus of $K$. This confirms a prediction of the L--space Conjecture. For instance, we produce taut foliations in every non--L--space obtained by surgery along the pretzel knot $P(-2,3,7)$, and indeed along every pretzel knot $P(-2,3,q)$, for $q$ a positive odd integer. This is the first construction of taut foliations for every non--L--space obtained by surgery along an infinite family of hyperbolic L--space knots. Additionally, we construct taut foliations in every closed 3--manifold obtained by $r$--framed Dehn surgery along a positive 1--bridge braid in $S^3$, where $r <g(K)$.

\section{Introduction}

The L-space Conjecture predicts a surprising relationship between Floer-homological, algebraic, and geometric properties of a closed 3-manifold $Y$:

\begin{conj}[The L-space Conjecture \cite{BoyerGordonWatson, Juhasz:Survey}] \label{conj:LSpace} 
Suppose $Y$ is an irreducible rational homology 3-sphere. Then the following are equivalent: 
\begin{enumerate}
\item $Y$ is a non-L-space (i.e.~ the Heegaard Floer homology of $Y$ is not ``simple"),
\item $\pi_1(Y)$ is left-orderable, and
\item $Y$ admits a taut foliation.
\end{enumerate}
\end{conj}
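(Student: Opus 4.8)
The plan is to prove the three-way equivalence one implication at a time, and to be upfront that only one of the six implications is currently within reach of a uniform argument; the remaining ones are exactly where the difficulty of the conjecture resides. I would begin with the implication $(3) \Rightarrow (1)$, which is the only direction available in full generality. If $Y$ is an irreducible rational homology sphere carrying a taut foliation, then by the Eliashberg--Thurston theorem the foliation can be perturbed to a tight contact structure, and Ozsv\'ath--Szab\'o's work relating such contact structures to Heegaard Floer homology shows that the rank of $\widehat{HF}(Y)$ is strictly greater than $|H_1(Y;\mathbb{Z})|$. Equivalently, $Y$ is not an L-space, giving $(3) \Rightarrow (1)$. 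This step is genuinely complete, and I would treat it as the anchor of the argument.

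For the converse $(1) \Rightarrow (3)$ --- produce a taut foliation on any non-L-space --- and for the two implications linking left-orderability, the honest plan is a reduction along the geometric decomposition of $Y$ followed by a piece-by-piece construction. After cutting $Y$ along its JSJ tori, the Seifert fibered pieces are handled by the classification of which Seifert fibered spaces admit horizontal (hence taut) foliations, due to Eisenbud--Hirsch--Neumann, Jankins--Neumann, and Naimi, whose numerical criterion one checks against the non-L-space condition; graph manifold pieces are then assembled using the gluing results of Boyer--Clay and the foliation-detection machinery of Hanselman--Rasmussen--Rasmussen--Watson. The left-orderability equivalence $(2) \Leftrightarrow (3)$ I would approach through the universal circle of a taut foliation (Thurston, Calegari--Dunfield): a taut foliation with hyperbolic leaves yields a nontrivial action of $\pi_1(Y)$ on a circle, from which one attempts to extract a left-order, while in the reverse direction one would try to promote a left-order to a branched surface carrying a taut foliation.

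The hard part will be the hyperbolic case of $(1) \Rightarrow (3)$, and this is precisely the gap that no uniform method closes. Non-L-spaceness is a finite, algebraic condition on $\widehat{HF}(Y)$, and there is no known mechanism that converts ``the rank is too large'' into the existence of a nowhere-zero, integrable plane field; the Floer side likewise carries no intrinsic handle on a universal circle or a left-order, so $(1) \Rightarrow (2)$ is obstructed for the same reason. Consequently any realistic attack on this direction is forced to proceed family by family, building the foliation by hand --- typically via a sutured manifold hierarchy (Gabai) or an explicit branched surface carrying taut foliations that realize an interval of surgery slopes. This is exactly the role played by the construction in the present paper: for surgeries on positive 3-braid knots it supplies the taut foliations predicted by $(1) \Rightarrow (3)$ for all slopes $r < 2g(K)-1$, confirming the conjecture on an infinite family of hyperbolic L-space knots where no general argument is available.
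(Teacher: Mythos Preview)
The statement you are attempting to prove is Conjecture~\ref{conj:LSpace}, the L-space Conjecture. The paper does not prove it; it is stated as an open conjecture with attribution to Boyer--Gordon--Watson and Juh\'asz, and the paper's contribution (Theorem~\ref{thm:main}) is to verify one direction on a specific infinite family, not to establish the equivalence in general. So there is no ``paper's own proof'' against which to compare your proposal.

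Your write-up is an honest outline of the state of the art, not a proof, and you say so yourself: only $(3)\Rightarrow(1)$ is settled in general, and the remaining implications are open. That assessment is correct, and your description of the known implication matches what the paper records (the paper additionally cites Bowden and Kazez--Roberts alongside Ozsv\'ath--Szab\'o, since Eliashberg--Thurston requires $C^2$ smoothness and the approximation results extend this to $C^0$ foliations). But the subsequent paragraphs on JSJ reduction, Seifert-fibered criteria, universal circles, and branched-surface constructions are a plan of attack, not an argument: the hyperbolic case of $(1)\Rightarrow(3)$ and both directions involving $(2)$ remain genuinely open outside graph manifolds, and nothing in your sketch closes those gaps. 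In short, there is no gap in your reasoning so much as a category error---you have written a status report on an open conjecture rather than a proof, because no proof currently exists.
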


Work by many researchers fully resolves Conjecture \ref{conj:LSpace} in the affirmative for graph manifolds \cite{BoyerClay2, BoyerClay, BoyerGordonWatson, BrittenhamNaimiRoberts, ClayLidmanWatson, EisenbudHirschNeumann, HRRW,  LiscaStipsicz}. 
Combining results of Ozsv\'ath-Szab\'o, Bowden, and Kazez-Roberts proves that if $Y$ admits a taut foliation, then $Y$ is a non-L-space \cite{OSz:HolDisks, Bowden:Approx, KazezRoberts}. Here, we investigate the converse.

One strategy for producing non-L-spaces is via Dehn surgery. A non-trivial knot $K \subset S^3$ is an \textbf{L-space knot} if \textit{some} non-trivial surgery along $K$ produces an L-space. Lens spaces are prominent examples of L-spaces, so any knot with a non-trivial surgery to a lens space (notably Berge knots \cite{Berge}) is an L-space knot. Berge-Gabai knots are the subclass of 1-bridge braids in $S^3$ admitting lens space surgeries \cite{Gabai:1BridgeBraids, Berge}, yet \textit{every} 1-bridge braid is an L-space knot \cite{GLV:11LSpace}.

In fact, if $K$ is an L-space knot, \textit{infinitely} many surgeries along $K$ yield L-spaces. In particular, for any $K$ realized as the closure of a positive braid, the set of L-space surgery slopes is either $[2g(K)-1, \infty) \cap \Q$, or the empty set \cite{Livingston:TauInvariant, OSz:LensSpaceSurgeries, KMOSz, Rasmussen2}. Thus, $r$-framed Dehn surgery along \textit{any} non-trivial knot realized as a positive braid closure yields a non-L-space for all $r < 2g(K)-1$. This viewpoint guides our treatment of Conjecture \ref{conj:LSpace}, which predicts these manifolds admit taut foliations.

\begin{thm}\label{thm:main}
Let $K$ be a knot in $S^3$, realized as the closure of a positive 3-braid. Then for every $r <2g(K)-1$, the knot exterior $X_K := S^3 - \accentset{\circ}{\nu}(K)$ admits taut foliations meeting the boundary torus $T$ in parallel simple closed curves of slope $r$. Hence the manifold obtained by $r$-framed Dehn filling, $S^3_r(K)$, admits a taut foliation.
\end{thm}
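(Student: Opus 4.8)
The plan is to construct the foliations directly, via an explicit branched surface tailored to the braid word, in the tradition of Gabai and Roberts. First I would fix a convenient presentation of $K$: since conjugation does not change the closure, every positive $3$-braid is conjugate (after collecting the central full twists $\Delta^2 = (\sigma_1\sigma_2)^3$) to a positive word $\beta = \Delta^{2d}\,\sigma_1^{a_1}\sigma_2^{b_1}\cdots\sigma_1^{a_k}\sigma_2^{b_k}$, and I would set aside at the outset the genuinely degenerate cases (Markov-destabilizable words, the unknot, and so on). Applying Seifert's algorithm to $\beta$ produces a fiber surface $F$ for the fibered complement $X_K \to S^1$: three disks joined by $c$ positively half-twisted bands, where $c$ is the word length of $\beta$. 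Since $F$ is connected with a single boundary component, $\chi(F) = 3 - c = 1 - 2g$, which records the basic identity $2g-1 = c-3$ that will govern the range of slopes.

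Next I would build the branched surface $B \subset X_K$ by modifying the fibration, whose monodromy is a product of \emph{positive} Dehn twists and whose fibers meet the boundary torus $T$ in the Seifert longitude (slope $0$). The construction spirals a collar of $F$ around $T$, so that $\tau := B \cap T$ becomes a train track carrying an interval of slopes rather than the single slope $0$, and inserts finitely many monkey-saddle branch loci — one family recording the bands of $F$, another recording the spiraling — so that $B$ fully carries two-dimensional laminations. Positivity of every band (equivalently, right-veeringness of the monodromy) is exactly what keeps the branching coherently co-orientable and prevents any Reeb-like complementary piece from being forced, while the total band count $c$ controls how far the slopes carried by $\tau$ can be pushed: down to, but not including, $2g-1 = c-3$.

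With $B$ in hand, for each rational $r < 2g-1$ I would exhibit an explicit non-negative integer weighting of the sectors of $B$ satisfying the switch equations and realizing boundary slope $r$ on $\tau$; the weights are read off arithmetically from the exponents $(a_i,b_i)$, and the inequality $r < 2g-1$ is precisely the feasibility condition for this linear system. This gives a lamination $\mathcal{L}_r$ carried by $B$ meeting $T$ in parallel simple closed curves of slope $r$. I would then verify that $B$ is essential (laminar) — no disk or half-disk of contact, no sphere or torus leaf, and, via Li's sink-disk criterion, that every carried lamination is essential — split $\mathcal{L}_r$ open, and foliate the complementary $I$-bundle and product-solid-torus regions compatibly to obtain a taut foliation $\mathcal{F}_r$ of $X_K$ meeting $T$ in slope-$r$ curves, with tautness coming from a transversal assembled from the $S^1$-direction of the original fibration. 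Finally, gluing in the filling solid torus $V$ with meridian $r$ and extending $\mathcal{F}_r$ across $V$ by a matching taut foliation of the solid torus yields a taut foliation of $S^3_r(K)$, proving the theorem.

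The main obstacle is the uniformity required in the middle steps: producing, for \emph{every} admissible exponent vector at once, a single branched surface whose boundary train track realizes the entire half-line of slopes $(-\infty,\, 2g-1)$ while remaining co-orientable and sink-disk-free. Pinning the right endpoint to be exactly $2g-1$, rather than a weaker bound, is the delicate point, and it is here that the rigid shape of positive $3$-braids — alternating blocks of $\sigma_1$'s and $\sigma_2$'s, with no third generator — must be exploited in an essential way.
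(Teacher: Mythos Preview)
Your high-level strategy --- build a laminar branched surface containing the fiber, verify Li's sink-disk criterion, show the boundary train track carries every rational slope in $(-\infty,\,2g-1)$, extract essential laminations, extend across the $I$-bundle complement to taut foliations of $X_K$, then fill --- matches the paper's. But what you have written is a plan, not a proof: you correctly flag the construction of $B$ and the sharp slope endpoint as the main obstacle, and then leave it unresolved. The paper's branched surface is not built by spiraling a collar or inserting abstract monkey saddles. It attaches $c_1+c_2-2$ explicit \emph{product disks} to a single fiber $F\times\{\tfrac12\}$, using the normal form $\beta=\sigma_1^{a_1}\sigma_2^{b_1}\cdots\sigma_1^{a_k}\sigma_2^{b_k}$ with every $a_i\ge 2$ (no $\Delta^{2d}$ is separated off). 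After a standardizing isotopy that pushes the image arcs into the Seifert disks, the co-orientation of each disk is recorded as a cusp direction $(\leftarrow)$ or $(\rightarrow)$ written beneath the corresponding letter of $\beta$, and the sink-disk analysis becomes purely combinatorial: product-disk sectors are never sinks, a band sector is a sink precisely under the local pattern $(\leftarrow\,\rightarrow)$ on two consecutive equal letters, and the Seifert-disk and polygon sectors are checked directly. This forces a three-way case split (the paper's Types~A,~B,~C, according to $k$ and the $b_i$), each with its own cusping recipe.

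The endpoint $2g-1$ is \emph{not} obtained by solving a weight system on the sectors of $B$. It comes from counting, along $\partial F$, the endpoints of the product-disk arcs that ``contribute maximally'' to $\tau$, and subtracting the number of \emph{linked} pairs among them. There are $c_1+c_2-2$ disks, hence that many maximally-contributing endpoints, and the cusping recipes are engineered so that exactly one linked pair appears; this yields the interval $(-\infty,\,c_1+c_2-3)=(-\infty,\,2g(K)-1)$. Your claim that positivity alone ``is exactly what keeps the branching coherently co-orientable and prevents any Reeb-like complementary piece'' and that ``the total band count $c$ controls how far the slopes carried by $\tau$ can be pushed'' is where the genuine gap lies: positivity does give co-orientability, but without the explicit cusping recipes and the linked-arc bookkeeping there is no mechanism forcing the sharp bound --- a generic choice of co-orientations either creates sink disks or produces many linked pairs, and one recovers only something like Roberts' $r<1$ rather than $r<2g-1$.
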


\begin{rmk} Theorem \ref{thm:main} can be reformulated as follows: for $K$ and $r$ as above, the manifold $S_r^3(K)$ admits a taut foliation, such that the core of the Dehn surgery is a closed transversal. 
\end{rmk}

A 3-stranded twisted torus knot is a knot obtained as the closure of $(\sigma_1 \  \sigma_2)^q (\sigma_2)^{2s}$, where $q$ and $s$ are positive integers, and $\sigma_1, \sigma_2$ are the standard Artin generators. Vafaee proved every 3-stranded twisted torus knot is an L-space knot \cite{Vafaee:TwistedTorusKnots}. Moreover, if an L-space knot admits a presentation as a 3-braid closure, then $K$ is a twisted torus knot \cite{LeeVafaee}. Thus, hyperbolic L-space knots are abundant among positive 3-braid closures. Applying Theorem \ref{thm:main} yields:

\begin{cor}
In Conjecture \ref{conj:LSpace}, (1) $\iff$ (3) holds for all Dehn surgeries along an infinite family of hyperbolic L-space knots. \hfill $\Box$
\end{cor}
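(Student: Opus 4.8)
The corollary is obtained by packaging Theorem \ref{thm:main} with the inputs already recalled in the introduction, so the plan is mainly to organize those pieces. First I would fix an infinite family $\mathcal{F}$ of hyperbolic L-space knots, each presented as the closure of a positive $3$-braid. Vafaee's theorem \cite{Vafaee:TwistedTorusKnots} supplies the candidates: every $3$-stranded twisted torus knot --- the closure of $(\sigma_1\sigma_2)^q(\sigma_2)^{2s}$ with $q,s$ positive integers, which is manifestly a positive $3$-braid word --- is an L-space knot, and infinitely many of these are hyperbolic. Concretely one may take $\mathcal{F} = \{\,P(-2,3,q) : q \text{ a sufficiently large positive odd integer}\,\}$, the pretzel knots discussed in the introduction, which belong to this family and are hyperbolic L-space knots. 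Fix $K \in \mathcal{F}$ and put $g = g(K)$. Since $K$ is at once a positive braid closure and an L-space knot, the dichotomy for L-space surgery slopes quoted in the introduction \cite{Livingston:TauInvariant, OSz:LensSpaceSurgeries, KMOSz, Rasmussen2} forces the set of L-space slopes of $K$ to be exactly $[2g-1,\infty) \cap \Q$.

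Now fix a slope $r \in \Q$ and compare conditions (1) and (3) of Conjecture \ref{conj:LSpace} for $Y = S^3_r(K)$, according to the value of $r$. If $r \geq 2g-1$, then $Y$ is an L-space, so (1) fails; and the contrapositive of the implication (3) $\Rightarrow$ (1) --- the combination of \cite{OSz:HolDisks, Bowden:Approx, KazezRoberts} noted earlier --- shows $Y$ admits no taut foliation, so (3) fails too. If instead $r < 2g-1$, then $r$ is not an L-space slope, so $Y$ is a non-L-space and (1) holds, while Theorem \ref{thm:main} exhibits a taut foliation in $Y$ (meeting the surgery torus in curves of slope $r$), so (3) holds. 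In either range (1) $\iff$ (3); since this holds for every $K \in \mathcal{F}$ and every $r$, the corollary follows.

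Given Theorem \ref{thm:main}, I do not anticipate a genuine obstacle in the corollary itself --- all the real content is upstream, in the proof of Theorem \ref{thm:main}. The only friction is matching the standing hypotheses of Conjecture \ref{conj:LSpace}: for $r \neq 0$ the group $H_1(S^3_r(K))$ is finite, so $Y$ is a rational homology sphere, and it is irreducible (a closed orientable $3$-manifold carrying a taut foliation is irreducible unless it is $S^1 \times S^2$, which cannot arise by surgery on a nontrivial knot, while irreducibility of the L-space surgeries follows from standard surgery results); the case $r = 0$ is read off directly, since $S^3_0(K)$ is not a rational homology sphere and hence not an L-space, and Theorem \ref{thm:main} still supplies the taut foliation. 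The one step requiring an external input is producing an honestly infinite, honestly hyperbolic sub-family, and the $P(-2,3,q)$ examples recalled in the introduction already serve this purpose.
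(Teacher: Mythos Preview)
Your proposal is correct and follows exactly the route the paper intends: take an infinite family of hyperbolic $3$-stranded twisted torus knots (Vafaee supplies the L-space property, and the $P(-2,3,q)$ pretzels give an explicit hyperbolic subfamily), split the surgery slopes at $2g(K)-1$, apply Theorem \ref{thm:main} on the non-L-space side, and invoke the Ozsv\'ath--Szab\'o/Bowden/Kazez--Roberts direction on the L-space side. The paper records no proof beyond the $\Box$, so you have simply written out what the text leaves as immediate.
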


Baker-Moore, strengthening results of Lidman-Moore, proved that the only L-space Montesinos knots are the pretzel knots $P(-2,3,q)$, for $q \geq 1, \ q$ odd \cite{LidmanMoore:PretzelKnots, BakerMoore:Montesinos}. These knots are realized as closures of positive 3-braids (see Figure \ref{fig:pretzelknotisotopy}). Applying Theorem \ref{thm:main}, we deduce:

\begin{cor}\label{thm:pretzelknots}
Let $K$ be an L-space Montesinos knot in $S^3$. Then for any $r$-framed surgery on $K$, the surgered manifold $Y = S_r^3(K)$ is a non-L-space $\iff$ $Y$ admits a taut foliation. \hfill $\Box$
\end{cor}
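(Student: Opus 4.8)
The plan is to deduce the biconditional directly from Theorem \ref{thm:main}, the classification of L-space Montesinos knots, and the (now standard) fact that a taut foliation obstructs an L-space structure. First I would invoke the theorem of Baker--Moore, building on Lidman--Moore \cite{LidmanMoore:PretzelKnots, BakerMoore:Montesinos}: the only L-space Montesinos knots in $S^3$ are the pretzel knots $P(-2,3,q)$ with $q \geq 1$ odd. Thus the hypothesis that $K$ is an L-space Montesinos knot forces $K = P(-2,3,q)$ for some such $q$. I would then record that each such $K$ is isotopic to the closure of a positive $3$-braid --- this is precisely what Figure \ref{fig:pretzelknotisotopy} exhibits --- so that the hypotheses of Theorem \ref{thm:main} are met for $K$.

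For the direction ``non-L-space $\Rightarrow$ taut foliation'', I would use that $K$ is simultaneously an L-space knot and a positive braid closure, so (as recalled in the introduction, via \cite{Livingston:TauInvariant, OSz:LensSpaceSurgeries, KMOSz, Rasmussen2}) its set of L-space surgery slopes is exactly $[2g(K)-1,\infty) \cap \Q$. Hence $Y = S^3_r(K)$ fails to be an L-space precisely when $r < 2g(K)-1$, and for every such $r$ Theorem \ref{thm:main} produces a taut foliation in $S^3_r(K) = Y$. For the converse ``taut foliation $\Rightarrow$ non-L-space'', I would cite the combination of Ozsv\'ath--Szab\'o, Bowden, and Kazez--Roberts \cite{OSz:HolDisks, Bowden:Approx, KazezRoberts}, which shows any closed $3$-manifold carrying a taut foliation is a non-L-space (equivalently, an L-space admits no taut foliation). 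Chaining these two implications yields the claimed equivalence for every slope $r$, including $r=0$, where $Y$ has $b_1 > 0$ and is a non-L-space by convention while $0 < 2g(K)-1$ since $K$ is nontrivial.

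All the geometric substance here sits in Theorem \ref{thm:main}; the rest is bookkeeping, and I do not expect a genuine obstacle. The only point that requires a little attention is confirming that the equivalence holds uniformly across all slopes: at the boundary slope $r = 2g(K)-1$ one gets an L-space, which admits no taut foliation by the cited obstruction, and at slopes yielding exceptional (lens space, small Seifert fibered, or potentially reducible) fillings one should simply note that the ``taut foliation $\Rightarrow$ non-L-space'' implication is valid for arbitrary closed $3$-manifolds and needs no rational homology sphere or irreducibility hypothesis. So the proof amounts to organizing which of the already-cited results governs which range of $r$.
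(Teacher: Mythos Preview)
Your proposal is correct and follows essentially the same route as the paper: the paper's ``proof'' is the sentence preceding the corollary, invoking Baker--Moore/Lidman--Moore to identify $K$ as $P(-2,3,q)$, Figure~\ref{fig:pretzelknotisotopy} to realize it as a positive $3$-braid closure, and Theorem~\ref{thm:main} together with the Ozsv\'ath--Szab\'o/Bowden/Kazez--Roberts obstruction for the two directions. Your write-up simply makes explicit the bookkeeping (the L-space slope interval, the $r=0$ case) that the paper leaves implicit.
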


We note that Delman-Roberts recover Corollary \ref{thm:pretzelknots} in forthcoming work \cite{DelmanRoberts}.

\begin{example} 
\textup{The Fintushel-Stern pretzel knot $P(-2,3,7)$ is a hyperbolic knot in $S^3$ admitting lens space surgeries \cite{FintushelStern}, hence is an L-space knot. It can be realized as a positive 3-braid closure in $S^3$ (see Figure \ref{fig:pretzelknotisotopy}). In Section \ref{section:example}, we explicitly construct the family of taut foliations meeting the boundary torus $T$ in all rational slopes $r < 2g(K)-1 = 9$.}
\end{example}

Tran, generalizing work of Nie \cite{Nie:LOPretzelKnots}, showed that for any $K$ in an infinite subfamily $\mathcal{F}$ of 3-stranded twisted torus knots, and $r \geq 2g(K)-1$, $\pi_1(S^3_r(K))$ is not left-orderable \cite{Tran}. The L-space pretzel knots comprise a proper subset of $\mathcal{F}$. We conclude:

\begin{cor}
Suppose $Y$ is obtained by $r$-framed Dehn surgery along $K$ in $S^3$, for $K$ a 3-stranded twisted torus knot in $\mathcal{F}$, and $r \in \Q$. Then
\begin{center}
$\pi_1(Y)$ is not left-orderable $\impliedby$ $Y$ is an L-space $\iff$ $Y$ does not admit a taut foliation.
\end{center}
That is, $(2) \implies (1) \iff (3)$ of Conjecture \ref{conj:LSpace} holds for manifolds obtained by Dehn surgeries along knots in $\mathcal{F}$. 
\end{cor}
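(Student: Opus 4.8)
The plan is to assemble this corollary from Theorem~\ref{thm:main}, the classification of L-space surgery slopes for positive braid closures, Vafaee's theorem, Tran's non-left-orderability result, and the result of Ozsv\'ath--Szab\'o, Bowden, and Kazez--Roberts that taut foliations obstruct L-spaces. First I would record that a knot $K\in\mathcal{F}$, being a $3$-stranded twisted torus knot, is the closure of the positive $3$-braid $(\sigma_1\sigma_2)^q\sigma_2^{2s}$ with $q,s$ positive integers, so that both Theorem~\ref{thm:main} and Tran's theorem apply to $K$; moreover $K$ is an L-space knot by Vafaee. By the cited work of Livingston, Ozsv\'ath--Szab\'o, Kronheimer--Mrowka--Ozsv\'ath--Szab\'o, and Rasmussen, the set of slopes $r$ with $S^3_r(K)$ an L-space is then exactly $[2g(K)-1,\infty)\cap\Q$. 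So the key dichotomy is: $Y = S^3_r(K)$ is an L-space $\iff r\ge 2g(K)-1$, and a non-L-space $\iff r < 2g(K)-1$.

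Next I would verify the equivalence $(1)\iff(3)$ of Conjecture~\ref{conj:LSpace}, i.e. that $Y$ is a non-L-space iff $Y$ admits a taut foliation, by splitting on the two ranges of $r$. If $r < 2g(K)-1$, then $Y$ is a non-L-space by the dichotomy above, and Theorem~\ref{thm:main} directly furnishes a taut foliation in $Y = S^3_r(K)$. If $r\ge 2g(K)-1$, then $Y$ is an L-space, and the combined results of Ozsv\'ath--Szab\'o, Bowden, and Kazez--Roberts show an L-space carries no taut foliation, so both sides of the biconditional fail. In either case $(1)\iff(3)$ holds. For the implication $(2)\implies(1)$ I would argue the contrapositive $\neg(1)\implies\neg(2)$: if $Y$ is an L-space, then $r\ge 2g(K)-1$ by the dichotomy, and Tran's theorem (applicable since $K\in\mathcal{F}$) gives that $\pi_1(S^3_r(K))$ is not left-orderable.

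The genuine content of the corollary is entirely in Theorem~\ref{thm:main}; the remainder is bookkeeping, and I expect no real obstacle. The one point needing a moment's care is that the three thresholds in play---the L-space surgery threshold $2g(K)-1$, the bound $r < 2g(K)-1$ of Theorem~\ref{thm:main}, and the bound $r\ge 2g(K)-1$ of Tran's theorem---partition $\Q$ with neither gap nor overlap, so that every slope $r$ falls under exactly one of the two cases above. I would also observe that the degenerate slopes (for example $r=0$, or slopes producing reducible or Seifert-fibered fillings, such as those coming from any torus knot contained in $\mathcal{F}$) need no separate discussion: for $r < 2g(K)-1$ Theorem~\ref{thm:main} applies verbatim, and for $r\ge 2g(K)-1$ the taut-foliation obstruction and Tran's theorem already cover them.
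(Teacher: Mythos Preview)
Your proposal is correct and matches the paper's approach: the paper does not write out a proof of this corollary, but the argument you give---combining Theorem~\ref{thm:main}, Vafaee's result that $3$-stranded twisted torus knots are L-space knots, the L-space surgery slope characterization for positive braid closures, Tran's non-left-orderability result for $r\ge 2g(K)-1$, and the Ozsv\'ath--Szab\'o/Bowden/Kazez--Roberts obstruction---is precisely what the surrounding text sets up. Your observation that the three thresholds line up exactly at $2g(K)-1$ is the only point requiring care, and you handle it correctly.
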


Our methods for proving Theorem ~\ref{thm:main} are constructive. Inspired by work of Roberts \cite{Roberts:Part1, Roberts:Part2}, we build \textbf{sink disk free} branched surfaces in fibered knot exteriors. By Li \cite{TaoLi:SinkDisk, TaoLi:BoundarySinkDisk}, these branched surfaces carry essential laminations. We first extend these laminations to taut foliations in knot exteriors, and then to taut foliations in surgered manifolds. 

Conjecture \ref{conj:LSpace} predicts Theorem \ref{thm:main} holds for any knot $K$ realized as a positive braid closure, on any number of strands. Any such $K$ is fibered; applying \cite{Roberts:Part2}, $S^3_r(K)$ admits a taut foliation for any $r <1$. An adaptation of our techniques partially closes the gap between Roberts' result and the prediction for 1-bridge braids in $S^3$:

{\begin{thm}\label{thm:1bridgebraids}
Let $K$ be any (positive) 1-bridge braid in $S^3$, i.e.~ $K$ is a knot in $S^3$, realized as the closure of a braid $\beta$ on $w$ strands, where $$\beta = (\sigma_b \sigma_{b-1} \ldots \sigma_2 \sigma_1) (\sigma_{w-1} \sigma_{w-2} \ldots \sigma_2 \sigma_1)^t$$ for $ w \geq 3, 1 \leq b \leq w-2, t \geq 1$. Then for every $r<g(K)$, the knot exterior $X_K := S^3 - \accentset{\circ}{\nu}(K)$ admits taut foliations meeting the boundary torus $T$ in parallel simple closed curves of slope $r$. Hence the manifold obtained by $r$-framed Dehn filling, $S^3_r(K)$, admits a taut foliation.
\end{thm}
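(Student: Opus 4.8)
The plan is to follow the same branched-surface strategy used for Theorem~\ref{thm:main}, now adapted to the fibration coming from the 1-bridge braid presentation. First I would fix the fiber surface $F$ of the fibered knot $K$, which is the Bennequin surface of the positive braid $\beta = (\sigma_b \cdots \sigma_1)(\sigma_{w-1}\cdots\sigma_1)^t$; its genus $g(K)$ is read off from the braid word via the Seifert/Euler-characteristic formula. Inside $X_K$ I would build a branched surface $B$ by taking a collection of ``product-like'' pieces modeled on the monodromy: start from the disk-with-bands picture of $F$, and then modify near the binding by the standard Roberts-type construction, spiraling sheets around the boundary torus so that $B$ meets $T$ in curves of a controlled slope. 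By varying a discrete ``number of spirals'' parameter (and interpolating) one arranges that $B\cap T$ realizes every slope $r < g(K)$ in parallel simple closed curves; this is where the bound $r<g(K)$, rather than $r<2g(K)-1$, enters, reflecting that the 1-bridge braid monodromy is less ``positive'' than a general positive 3-braid and forces a weaker control on the achievable slopes.

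Next I would verify that $B$ is a \emph{sink disk free} branched surface: enumerate the sectors of the complement $X_K \setminus B$ and check that no disk sector has its entire boundary consisting of cusps pointing inward. Here the key structural input is that each elementary band of the positive braid contributes a branch locus with a definite coorientation, and the spiraling near $T$ is arranged (as in Roberts) so that the ``escaping'' directions always exist. I would then invoke Li's theorems \cite{TaoLi:SinkDisk, TaoLi:BoundarySinkDisk}: a sink disk free (and appropriately laminar/without other obvious obstructions) branched surface with boundary fully carries an essential lamination, and moreover for each slope $r$ with $B\cap T$ of slope $r$, there is an essential lamination meeting $T$ in curves of slope $r$. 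Filling in the complementary regions of the lamination by product foliations (using that the lamination has no closed leaf that bounds and that the branched surface was built from a fibration, so the gaps are $I$-bundles and monkey-saddle pieces that foliate), I would upgrade the essential lamination to a taut foliation $\mathcal{F}_r$ of $X_K$ meeting $T$ in slope-$r$ curves, exactly as in the proof of Theorem~\ref{thm:main}. Dehn filling $X_K$ along slope $r$ then caps $\mathcal{F}_r$ off with meridian disks of the filling solid torus (the curves $\mathcal{F}_r\cap T$ being parallel to the filling slope), producing a taut foliation of $S^3_r(K)$ with the surgery core as a closed transversal.

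The main obstacle I expect is the sink-disk-free verification for the general 1-bridge braid word, because the parameter $b$ (the length of the ``short'' subword $\sigma_b\cdots\sigma_1$) interacts with the $t$-fold ``full'' subword in a way that changes the combinatorics of the branch locus; unlike the uniform $3$-braid case one cannot just appeal to a single fixed local model. I would handle this by isolating the contribution of the $(\sigma_{w-1}\cdots\sigma_1)^t$ block — which is a $(w,t\text{-ish})$ torus-braid piece whose branched surface is well understood — and treating the extra $(\sigma_b\cdots\sigma_1)$ factor as a bounded perturbation that adds only finitely many new sectors, each of which I would check directly. A secondary technical point is making the slope function continuous in the interpolation parameter so that the \emph{entire} interval $(-\infty, g(K))\cap\Q$ is covered and not merely a discrete set of slopes; this is the analogue of the slope-realization step in \cite{Roberts:Part2} and I would adapt that argument, using that the branched surface can be allowed to wrap a non-integral, ``fractional'' amount by inserting parallel copies of a torus boundary piece.
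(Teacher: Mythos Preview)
Your high-level outline matches the paper's strategy---build a branched surface from the fiber surface and product disks, verify it is sink disk free, apply Li's theorem, extend the lamination to a taut foliation, and Dehn fill---but the concrete construction you describe diverges from what is actually done, and in a couple of places your heuristics are off in ways that would prevent the argument from going through.

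First, the paper does \emph{not} use any Roberts-style spiraling near the boundary torus, nor any continuous interpolation parameter. The branched surface is built exactly as in the 3-braid case: a single copy of the fiber surface together with finitely many co-oriented product disks. The interval of slopes carried comes directly from the boundary train track $\tau$: if $\Gamma$ disks are used and their arcs are pairwise unlinked, $\tau$ carries all of $(-\infty,\Gamma)$. There is no ``fractional wrapping'' step; the slope-realization is a purely combinatorial count of sectors of $\tau$, just as in Section~\ref{subsection:step5a}.

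Second, and more seriously, you misidentify why the bound is $g(K)$ rather than $2g(K)-1$. It is not that the 1-bridge braid monodromy is ``less positive''---these are positive braids. The actual mechanism is this: on $w$ strands the image arcs $\alpha_j^{+}$ and the preimage arcs $\alpha_\ell^{-}$ would interact across many Seifert disks, producing a proliferation of polygon sectors whose sink-disk analysis is intractable. The paper sidesteps this by only taking product disks for $\sigma_i$ with $i$ \emph{odd}. After standardizing, every $\alpha_j^{-}$ then lies in an odd Seifert disk and every $\alpha_j^{+}$ in an even one, so there are \emph{no} polygon sectors at all and the sink-disk check reduces to disk and band sectors. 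The cost is that you only use roughly half of the available product disks; a direct count gives $\Gamma \geq g(K)$ in every parity case, which is exactly the bound in the theorem. Your proposed decomposition into a torus-braid block plus a bounded perturbation does not supply this parity trick, and without it the sink-disk verification you sketch would not close.
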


\subsection{Organization}
In Section \ref{section:background}, we review the required background on branched surfaces and fibered knots. In Section \ref{section:example}, we establish the foundations for proving Theorem \ref{thm:main}. Along the way, we construct taut foliations for every $S_r^3(K)$, where $K = P(-2,3,7)$ and $r<9$. In Section \ref{section:3braids}, we prove Theorem \ref{thm:main}. In Section \ref{section:1bridgebraids}, we prove Theorem 
\ref{thm:1bridgebraids}. \\

\begin{wrapfigure}[1]{o}{.18\linewidth}
\labellist
\small
\pinlabel $v$ at 58 246
\pinlabel $w$ at 246 57 
\pinlabel {$\langle v, w \rangle = 1$} at 180 140
\endlabellist
\begin{minipage}[c][.1\linewidth]{20mm}
\centering
\includegraphics[scale=.30]{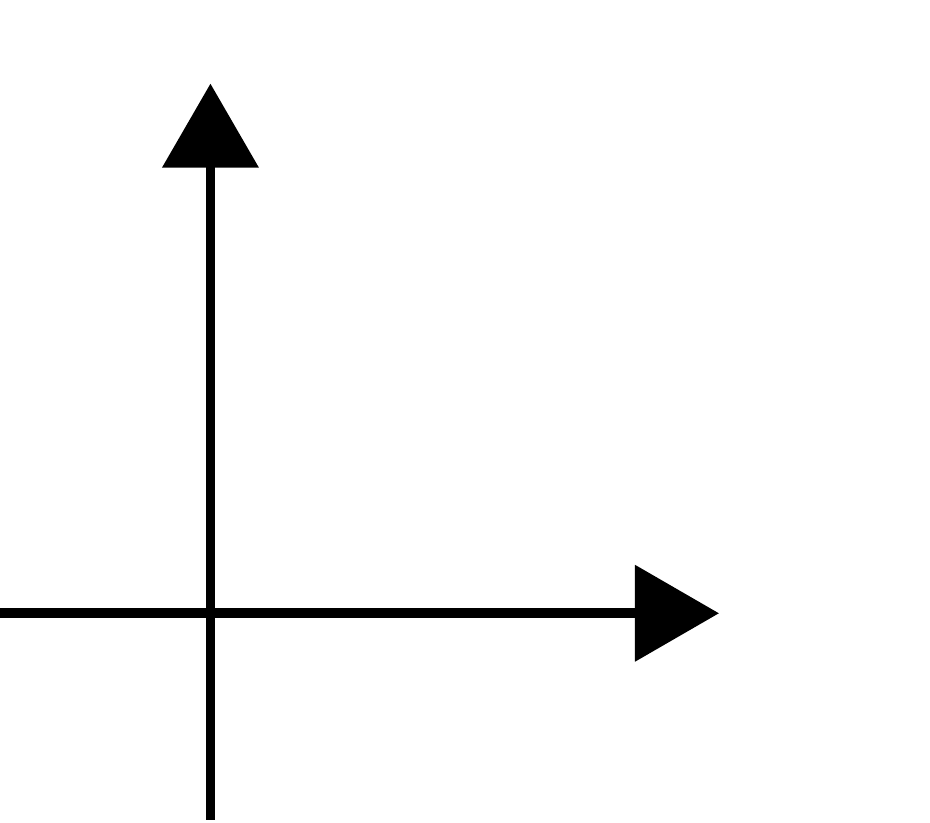}
\end{minipage}
\label{fig:intersection_number}
\end{wrapfigure}

\subsection{Conventions} \label{conventions}
\begin{itemize}
\item We work only with braid closures which are knots in $S^3$.
\item For any knot exterior $X_K$, $H_1(\partial X_K)$ is generated by \\the Seifert longitude $\lambda$ and the standard meridian $\mu$. 
\item Let $\langle \alpha, \beta \rangle$ denote the algebraic intersection number; following the sign convention above, we set $\langle \lambda, \mu \rangle = 1$. For any essential simple closed curve $\gamma$ on $T = \partial X_K$, the slope of $\gamma$ is determined by $\displaystyle \frac{\langle \gamma, \lambda \rangle}{\langle \mu, \gamma \rangle}$.
\item We use $\sigma_1, \sigma_2, \ldots, \sigma_{n-1}$ to represent the standard Artin generators for the $n$-stranded braid group. Strands are drawn vertically, oriented ``down", and enumerated from left-to-right. Given a braid diagram, we recover the braid word by reading $\beta$ from top-to-bottom.
\item The surface $F$ will always be orientable; in all figures of Seifert surfaces, only $F^{+}$ is visible.
\item If a properly embedded arc $\alpha$ lies on $F^{-}$, it is drawn with a \textcolor{blue}{blue} dotted line; if $\alpha$ lies on $F^{+}$, it is drawn with a \textcolor{RubineRed}{pink} solid line. A helpful mnemonic: ``\textbf{p}ink" and ``\textbf{p}lus" both start with ``\textbf{p}".

\item Given a fibered knot $K \subset S^3$ with fiber $F$ and monodromy $\varphi$, the knot exterior is a mapping torus $F \times [0,1] / \sim$, where $(x,0) \sim (\varphi(x), 1)$. Moreover, $\varphi \approx \mathbbm{1}$ in $\nu(\partial F)$. \\
\end{itemize}

\subsection{Acknowledgements} I am grateful to my advisor, Josh Greene, for his guidance, patience, and kindness. Thanks to Tao Li for answering countless questions, and John Baldwin for sharing his vision to extend Roberts' results. Additionally, thanks to Peter Feller, Kyle Hayden, and Patrick Orson for helpful conversations. Finally, we thank the referee for their careful reading and valuable suggestions.

\section{Background} \label{section:background}

\subsection{Branched Surfaces}
Our primary tool for constructing taut foliations are branched surfaces. For a detailed exposition on branched surfaces, see Floyd-Oertel \cite{FloydOertel}.

\begin{defn}
A \textbf{spine for a branched surface} is a 2-complex in a 3-manifold $M$, locally modeled by:
\end{defn}

\begin{figure}[h]\center
\includegraphics[scale=.3]{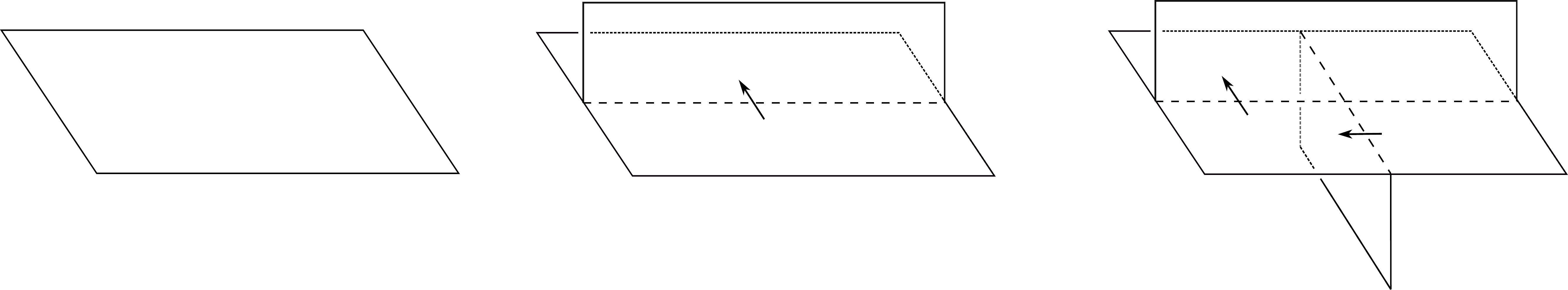}
\caption{Ignoring the arrows yields the local models for the spine of a branched surface.} 
\label{fig:spine}
\end{figure}

\begin{defn}
A \textbf{branched surface} $B$ in a 3-manifold $M$ is built by providing smoothing/cusping instructions for a spine. It is locally modeled by:
\end{defn}

\begin{figure}[h]\center
\includegraphics[scale=.3]{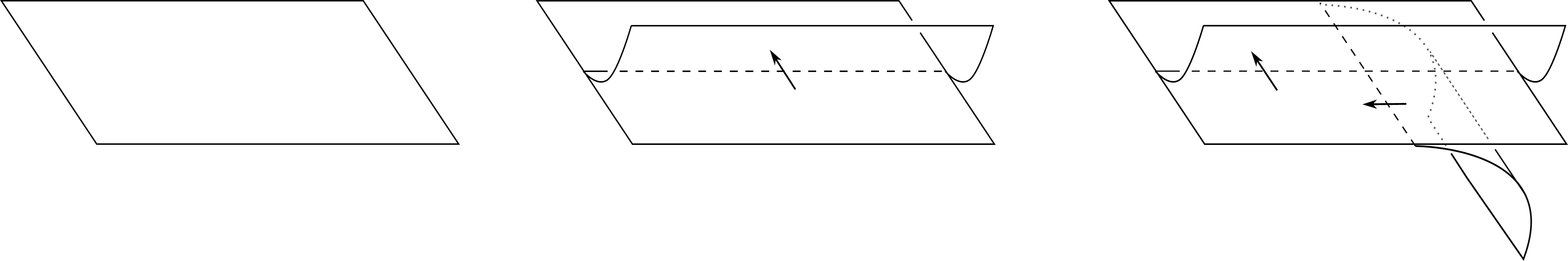}
\caption{The cusping instructions for the spine in Figure \ref{fig:spine} yield these local models.}
\label{fig:branched_surface}
\end{figure}

A branched surface is locally homeomorphic to a surface everywhere except in a set of properly embedded arcs and simple closed curves, called the \textbf{branch locus $\gamma$}. A point $p$ in $\gamma$ is called a \textbf{triple point} if a neighborhood of $p$ in $B$ is locally modeled by the rightmost picture of Figure \ref{fig:branched_surface}. A \textbf{branch sector} is a connected component of $\overline{B - \gamma}$ (the closure under the path metric). In this paper, all branched surfaces meet the boundary torus of $X_K$; it will do so in a train track. 

\begin{defn}
A \textbf{sink disk} \cite{TaoLi:SinkDisk} is a branch sector $S$ of $B$ such that (1) $S$ is homeomorphic to a disk, (2) $\partial S \cap \partial M = \varnothing$, and (3) the branch direction of every smooth arc or curve in its boundary points into the disk. A \textbf{half sink disk} \cite{TaoLi:BoundarySinkDisk} is a branch sector $S$ of $B$ such that (1) $S$ is homeomorphic to a disk, and (2) $\partial S \cap \partial M \neq \varnothing$, and (3) the branch direction of each arc in $\partial S - \partial M$ points into $S$. Note: $\partial S \cap \partial M$ may not be connected. When a branched surface $B$ contains no sink disk or half sink disk, we say $B$ is \textbf{sink disk free}. See Figure \ref{fig:sinkdiskdefn}.
\end{defn}

Thus, to prove a branched surface is sink disk free, we need only check that some cusped arc points out of each branch sector. Indeed, this is the heart of the proof of Theorem \ref{thm:main}.

\begin{figure}[h]\center
\includegraphics[scale=.6]{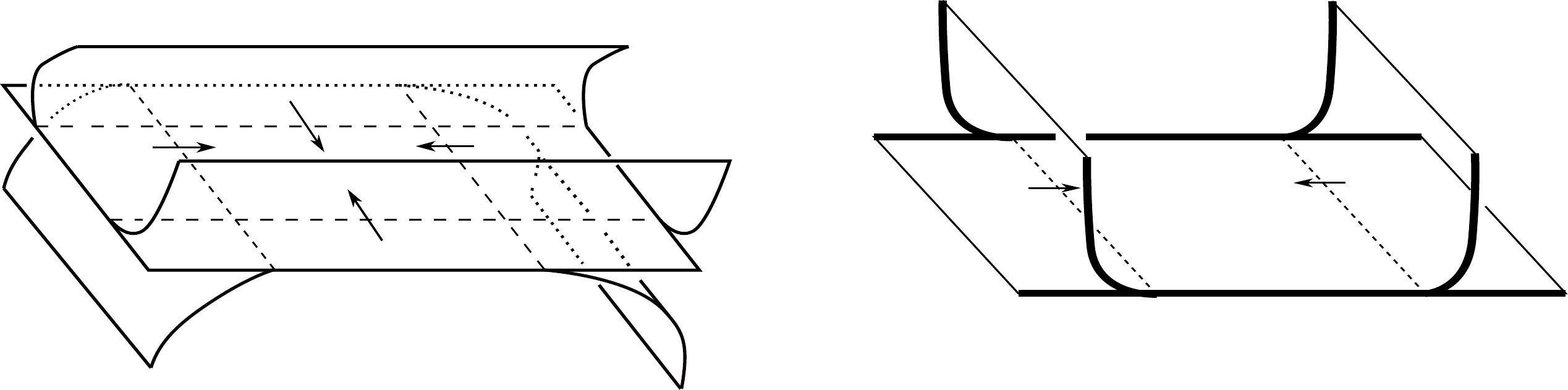}
\caption{On the left, the local model of a \textbf{sink disk}. On the right, the \textbf{bolded} lines lie on $\partial M \approx T^2$; this is the local model for a \textbf{half sink disk}. }
\label{fig:sinkdiskdefn}
\end{figure}

Gabai and Oertel prove a lamination $\mathcal{L}$ is essential if and only if $\mathcal{L}$ is carried by an essential branched surface $B$ \cite{GabaiOertel}. Li proves that for $B$ to carry an essential lamination, it suffices to be sink disk free:

\begin{thm}[Theorem 2.5 in \cite{TaoLi:BoundarySinkDisk}]\label{thm:taolisinkdisk}
Suppose $M$ is an irreducible and orientable 3-manifold whose boundary is an incompressible torus, and $B$ is a properly embedded branched surface in $M$ such that
\begin{enumerate}
\item[(1a)] $\partial_h(N(B))$ is incompressible and $\partial$-incompressible in $M - \text{int}(N(B))$
\item[(1b)] There is no monogon in $M - \text{int}(N(B))$
\item[(1c)] No component of $\partial_h N(B)$ is a sphere or a disk properly embedded in $M$
\item[(2)] $M - \text{int}(N(B))$ is irreducible and $\partial M - \text{int}(N(B))$ is incompressible in $M - \text{int}(N(B))$
\item[(3)] $B$ contains no Reeb branched surface (see \cite{GabaiOertel} for more details)
\item[(4)] $B$ is sink disk free
\end{enumerate}
Suppose $r$ is any slope in $\Q \cup \{\infty\}$ realized by the boundary train track $\tau_B = B \cap \partial X_K$. If $B$ does not carry a torus that bounds a solid torus in $M(r)$, the manifold obtained by $r$-framed Dehn filling, then (1) $B$ carries an essential lamination in $M$ meeting the boundary torus in parallel simple closed curves of slope $r$, and (2) $M(r)$ contains an essential lamination. 
\end{thm}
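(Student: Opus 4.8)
The plan is to follow Li's two-part strategy \cite{TaoLi:SinkDisk, TaoLi:BoundarySinkDisk}: first build a lamination carried by $B$ inside $M$ whose intersection with $T$ is a multicurve of slope $r$, then extend it across the Dehn filling.

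\textbf{Step 1: constructing the lamination in $M$.} I would fix a fibered neighborhood $N(B)$, with interval fibers and horizontal boundary $\partial_h N(B)$, and use the reformulation recorded above: $B$ is sink disk free exactly when every branch sector has a cusp (a smooth subarc of the branch locus in its frontier) whose branch direction points \emph{out} of the sector. This is precisely the data needed to perform an elementary splitting of $N(B)$ supported in that sector, creating no monogon and not collapsing the sector. Iterating these splittings, distributed so that every sector is split infinitely often (including the sectors meeting $T$, which is possible because there are no \emph{half} sink disks), produces a nested sequence $N(B) = N_0 \supset N_1 \supset N_2 \supset \cdots$, each carried by its predecessor; I would set $\mathcal{L} = \bigcap_i N_i$. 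The sink-disk-free hypothesis is what guarantees the process never stalls, so that $\mathcal{L}$ is an honest lamination, fully carried by $B$. By Gabai--Oertel \cite{GabaiOertel}, conditions (1a)--(3) say that $B$ is an essential branched surface, hence any lamination it fully carries is essential; so $\mathcal{L}$ is essential in $M$. Running the boundary splittings compatibly with a slope-$r$ multicurve carried by $\tau_B = B \cap T$ then forces $\mathcal{L} \cap T$ to be parallel simple closed curves of slope $r$, giving part (1).

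\textbf{Step 2: filling in.} Let $V \cong S^1 \times D^2$ be the solid torus glued to $M$ to form $M(r)$, with slope-$r$ curves on $T$ bounding meridian disks of $V$. Capping the boundary leaves of $\mathcal{L}$ with a disjoint nested family of such disks, following Li, yields a lamination $\widehat{\mathcal{L}}$ in $M(r)$. One then checks that the only way $\widehat{\mathcal{L}}$ can fail to be essential is the appearance, after capping, of a sphere leaf, a ball complementary region, or a torus leaf bounding a solid torus: the first two would force $\mathcal{L}$ to carry an essential annulus capping to a sphere, which is excluded by incompressibility of $T$ together with the laminar conditions, and the last forces $B$ to carry a torus bounding a solid torus in $M(r)$ — precisely the situation excluded by hypothesis. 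Hence $\widehat{\mathcal{L}}$ is an essential lamination in $M(r)$, giving part (2).

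\textbf{The main obstacle.} The heart of the argument is verifying that the infinite splitting sequence in Step 1 converges to a genuine lamination rather than degenerating (a sector collapsing to an arc; a complementary region turning into a monogon, a disk of contact, or a reducible piece). This is exactly where sink-disk-freeness must be used essentially — it is the combinatorial guarantee that an outward splitting is always available in \emph{every} sector — in tandem with (1a)--(1c), which rule out the pathological complementary regions. The subsidiary difficulty is running the whole construction relative to the boundary train track $\tau_B$, so that the slope-$r$ structure persists throughout and no half sink disk blocks a boundary splitting, and then confirming that the meridian-disk capping introduces no sphere, which is what localizes the essentiality failure to the single excluded case of a torus bounding a solid torus.
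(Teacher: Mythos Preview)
This theorem is not proved in the paper at all: it is quoted verbatim as Theorem~2.5 of Li \cite{TaoLi:BoundarySinkDisk} and used as a black box, with only a remark noting that the statement has been mildly rephrased to track the lamination in $M$ rather than in $M(r)$. There is therefore no ``paper's own proof'' to compare your proposal against.

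That said, your sketch is a fair high-level summary of Li's strategy in \cite{TaoLi:SinkDisk, TaoLi:BoundarySinkDisk}: iterated splittings driven by the absence of sink disks and half sink disks, convergence to a fully carried lamination, essentiality via the Gabai--Oertel criteria encoded in (1a)--(3), and then capping across the Dehn filling with the solid-torus obstruction isolating the one excluded case. If your intent was to reproduce Li's proof, the outline is in the right direction, though of course the convergence and essentiality-after-capping steps require substantial work that you have only gestured at. If your intent was to supply what the present paper does, the correct answer is simply to cite Li and move on.
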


\begin{rmk}
Our version of Theorem \ref{thm:taolisinkdisk} differs mildly from the version in \cite{TaoLi:BoundarySinkDisk}. The discrepancy arises from our consideration of the lamination in $M$; this is not problematic, as the lamination in $M(r)$ meets the surgery torus in simple closed curves of slope $r$.
\end{rmk}

A branched surface satisfying conditions (1--4) in Theorem \ref{thm:taolisinkdisk} is called a \textbf{laminar branched surface}. To prove Theorem \ref{thm:main} for any positive 3-braid knot $K$, we construct a laminar branched surface $B$ and prove the boundary train track $\tau$ carries all rational slopes $r <2g(K)-1$. Applying Theorem \ref{thm:taolisinkdisk}, we deduce the existence of essential laminations in $X_K$, which we extend to taut foliations in $X_K$. 

\subsection{Product Disks} \label{section:productdisks}
Positive braid closures are fibered links \cite{Stallings:Fibered}. This statement can be proved concretely via disk decomposition \cite{Gabai:Fibered}. We recount the relevant details of Gabai's method. 

For $K \subset S^3$, let $F$ be a genus $g$ orientable Seifert surface for $K$. $F \times I$ is a genus $2g$ handlebody $H$, and $\partial H \approx F^+ \cup F^{-} \cup A$, where $A \approx K \times I$. This is an example of a \textbf{sutured manifold} with annular suture $A$, formally written as $(F \times I, \partial F \times I) \approx (F \times I, K \times I) \approx (M, \gamma)$.

A \textbf{product disk} is a disk $D^2$ in the \textbf{complementary sutured manifold} $(X_F, \partial F \times I)$, $X_F :=\overline{S^3 - (F \times I)}$, such that $\partial D^2 \approx S^1$ meets the suture $A$ exactly twice. Given a product disk in $X_F$, we can \textbf{decompose along it}, by cutting $X_F$ along $D$ and creating a new sutured manifold $M' \approx \overline{X_F - (D \times I})$. The sutures $\gamma$ of $M$ can be modified in one of two ways to form the sutures $\gamma'$ of $M'$: at the sites where $\gamma \cap \partial M'$, connect the ends of $\gamma \cap (\partial D \times (\pm 1))$ by diameters of $D\times \{\pm 1 \}$. Writing $(M, \gamma) \stackrel{D}{\leadsto} (M', \gamma')$ denotes a \textbf{(product) disk decomposition}. 

\begin{wrapfigure}{R}{.28\linewidth}
\labellist
\pinlabel {$\alpha$} at -50 850
\pinlabel {$\varphi(\alpha)$} at 750 650
\endlabellist
\begin{minipage}[c][.25\paperheight]{50mm}
\begin{center}
\includegraphics[scale=.12]{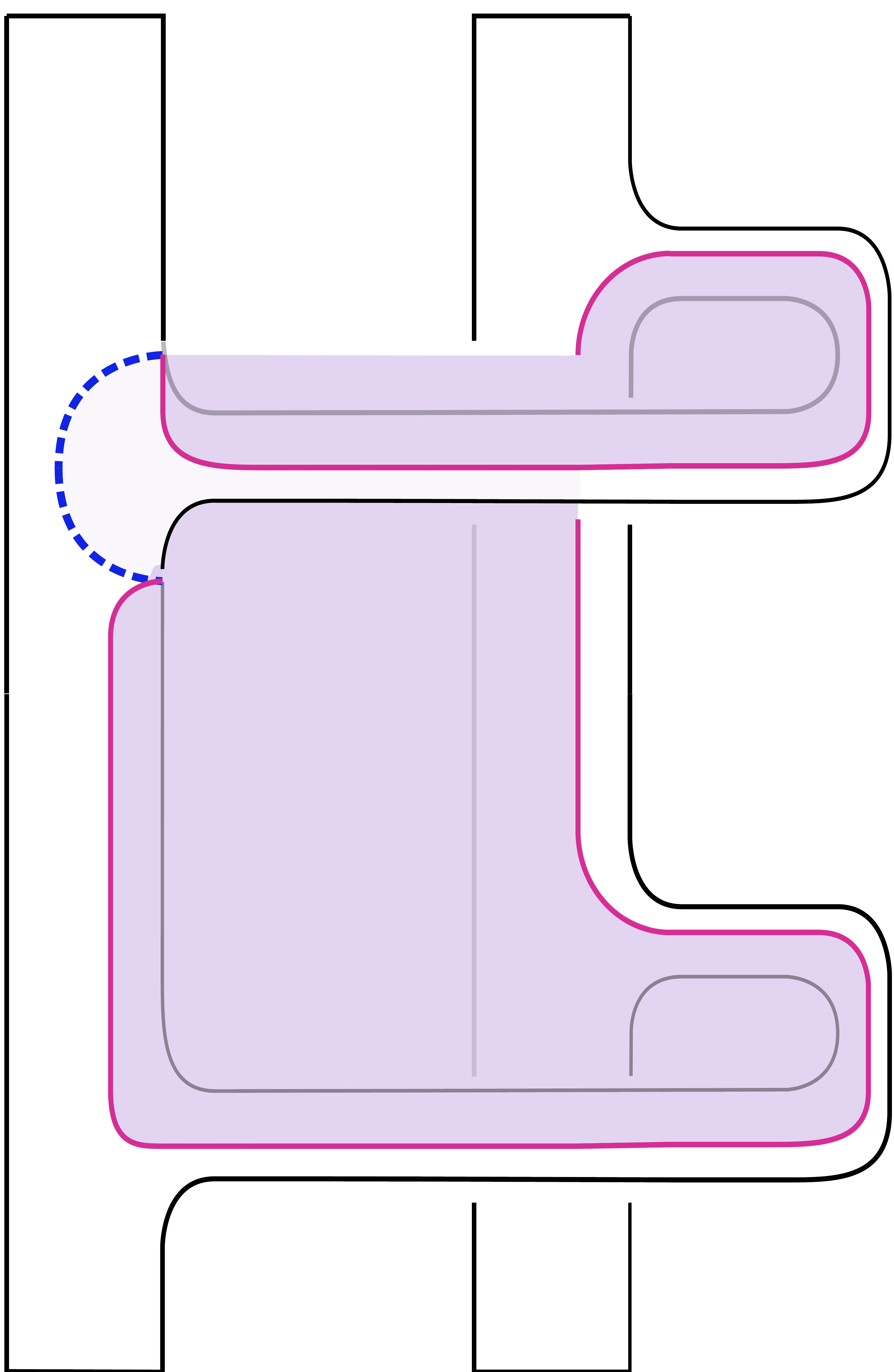}
\captionof{figure}{The product \\disk $D$ for a positive \\Hopf Band. We see \\ $\partial D|_{F^{+} \cup F^{-}} \approx \alpha \cup \varphi(\alpha)$,\\ where $\varphi$ is a positive \\Dehn twist about the \\core curve.}
\label{fig:eg_productdisk}
\end{center}
\end{minipage}
\end{wrapfigure}

\begin{thm}[Theorem 1.9 in \cite{Gabai:Fibered}]
A link $L \subset S^3$ is fibered with fiber surface $F$ if and only if a sequence of product disk decompositions, applied to $(X_F, \partial F \times I)$, terminates with a collection of product sutured balls $(B^3, S^1 \times I)$.
\end{thm}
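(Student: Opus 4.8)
The plan is to recast the statement in the language of sutured manifolds and then to show that a product disk decomposition preserves, in \emph{both} directions, the property of being a product sutured manifold; the equivalence then follows by relating fiberedness of $L$ to the product structure of the complementary sutured manifold. First I would record the basic translation: by definition $L$ is fibered with fiber $F$ exactly when $S^3$ cut open along $F$ is a product region, i.e.\ when the complementary sutured manifold $(X_F, \partial F \times I)$ is a product sutured manifold, isotopic to $(F \times I, \partial F \times I)$, with the monodromy $\varphi$ recording the regluing of $F \times \{0\}$ to $F \times \{1\}$. Since in any product $(\Sigma \times I, \partial \Sigma \times I)$ one has $R_+ = \Sigma \times \{1\} \cong \Sigma$, and $R_+(\gamma) = F^+ \cong F$ for $X_F$, product-ness forces $\Sigma \cong F$, so the fiber is indeed $F$. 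This reduces the theorem to the purely sutured assertion that $(X_F, \partial F \times I)$ is a product sutured manifold if and only if some sequence of product disk decompositions reduces it to a disjoint union of product sutured balls $(B^3, S^1 \times I) = (D^2 \times I, \partial D^2 \times I)$.

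The technical heart is what I will call the \emph{Verticality Lemma}: an essential product disk $D$ in a product sutured manifold $(\Sigma \times I, \partial \Sigma \times I)$ is isotopic to a vertical disk $\alpha \times I$ for some properly embedded arc $\alpha \subset \Sigma$. This I would obtain from the classification of essential (incompressible, $\partial$-incompressible) surfaces in $I$-bundles: such a surface is isotopic to a horizontal or vertical one, and since a product disk meets both $R_+ = \Sigma \times \{1\}$ and $R_- = \Sigma \times \{0\}$ it cannot be horizontal, hence is vertical. From this lemma I would deduce the key \emph{Invariance Lemma}: for a product disk decomposition $(M, \gamma) \stackrel{D}{\leadsto} (M', \gamma')$, the sutured manifold $(M, \gamma)$ is a disjoint union of product sutured manifolds if and only if $(M', \gamma')$ is.

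The forward implication of the Invariance Lemma is immediate: straightening $D \simeq \alpha \times I$ and cutting yields $\big((\Sigma \text{ cut along } \alpha) \times I\big)$, still a product. The reverse implication is the main obstacle. Here $(M', \gamma') = \Sigma' \times I$ is \emph{given} to be a product, and $M$ is recovered by regluing the two scar disks $D', D''$ of $D$ along $\partial M'$; the difficulty is that, a priori, one has no control over how these scar disks sit. I would apply the Verticality Lemma to each scar disk, obtaining $D' \simeq \beta \times I$ and $D'' \simeq \beta' \times I$ for arcs $\beta, \beta' \subset \partial \Sigma'$. Because the decomposition respects the sutured structure, the regluing homeomorphism $D' \to D''$ carries the $R_+$-edge to the $R_+$-edge and the $R_-$-edge to the $R_-$-edge, hence respects the $I$-coordinate; therefore $M \cong \big((\Sigma' \text{ glued along } \beta \sim \beta') \times I\big)$ is again a product.

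Finally I would assemble the theorem. For the forward direction, given $L$ fibered, the first step gives $(X_F, \gamma) \cong F \times I$; choosing a complete system of arcs $\alpha_1, \dots, \alpha_{2g}$ cutting $F$ into a disk and decomposing successively along the vertical product disks $\alpha_i \times I$ terminates at $(D^2 \times I, \partial D^2 \times I)$, a product sutured ball. For the converse, given any sequence of product disk decompositions ending in a union of product sutured balls (which are products), I would apply the Invariance Lemma inductively up the sequence to conclude that $(X_F, \gamma)$ is a product sutured manifold, whence by the first step $L$ is fibered with fiber $F$. I expect the reverse direction of the Invariance Lemma to be the crux, since it demands control of the regluing of the scar disks, and the Verticality Lemma is exactly the tool that tames it; a secondary point to handle with care is arranging that the product disks occurring in a decomposition may be taken essential (non-$\partial$-parallel), so that the $I$-bundle classification genuinely applies.
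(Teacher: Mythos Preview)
The paper does not supply a proof of this statement; it is quoted verbatim as Theorem~1.9 of \cite{Gabai:Fibered} and used as background, so there is no proof here to compare your proposal against.

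Your overall strategy is the standard one and is sound: translate fiberedness into the complementary sutured manifold being a product, and then show that product disk decomposition preserves product-ness in both directions. The forward direction and the inductive assembly are fine.

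There is, however, a genuine wrinkle in your reverse Invariance Lemma. The scar disks $D',D''$ lie in $\partial M'$, not as properly embedded surfaces in $M'$, so your Verticality Lemma (a statement about essential properly embedded surfaces in an $I$-bundle) does not literally apply to them. More to the point, each scar disk is \emph{not} contained in the vertical boundary $\partial\Sigma'\times I$: the new suture $\gamma'$ crosses it in a single diameter arc, and the two half-disks on either side of that diameter lie in $R_+(\gamma')=\Sigma'\times\{1\}$ and $R_-(\gamma')=\Sigma'\times\{0\}$ respectively. So the conclusion ``$D'\simeq\beta\times I$ for an arc $\beta\subset\partial\Sigma'$'' is not correct as written. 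What you actually need is much more elementary than the $I$-bundle classification: the diameter $d'\subset\gamma'=\partial\Sigma'\times I$ is a spanning arc of an annulus, hence isotopic to a vertical arc $\{\mathrm{pt}\}\times I$; once the diameters $d',d''$ are made vertical, the half-disks in $R_\pm$ are boundary bigons in $\Sigma'\times\{0,1\}$ and the regluing of $D'$ to $D''$ (which matches $R_+$-half to $R_+$-half by construction) visibly produces $(\Sigma'\cup\text{1-handle})\times I$. With that correction your argument goes through.
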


When $K$ is a fibered knot in $S^3$, the sequence of product disk decompositions terminates with a single $(B^3, S^1 \times I)$. 

A sequence of disk decompositions to a product sutured ball not only certifies fiberedness, but also determines where the monodromy sends properly embedded arcs on $F$. Let $F$ be a fiber surface for $K \subset S^3$; thus, $(F \times I, A)$ is a trivial product sutured manifold. Heuristically, all the data pertaining to the monodromy of the fibered knot is captured by the complementary sutured manifold. In particular, let $\alpha$ be an essential properly embedded arc on $F^{-}$. Now, view $\alpha$ as an arc on $F^{-} \subset \partial (F \times I)$ with $\partial \alpha \subset \partial A$.  
Pushing $\alpha$ through the complementary sutured manifold $(X_F \approx F \times I, \partial F \times I)$ yields  a disk $D \approx \alpha \times I$, where $\partial D$ meets the suture twice, and $\overline{\partial D - A} = \alpha^{+} \sqcup \alpha^{-}$, with $\alpha^{\star} \subset F^{\star}$.  $D$ is a product disk, and $\varphi(\alpha^{-}) \approx \alpha^{+}$. See Figure \ref{fig:eg_productdisk} for an example.

\begin{wrapfigure}{L}{.66\linewidth}
\labellist
\pinlabel {$\mathbbm{\alpha}_{j}^{-}$} at -60 2050
\pinlabel {$\mathbbm{\alpha}_{j}^{+}$} at 230 350
\pinlabel {$\mathbbm{\alpha}_{j+1}^{-}$} at 300 1450
\pinlabel {$\mathbbm{\alpha}_{j+1}^{+}$} at 1060 1200
\pinlabel {$\mathbbm{b}_{j}$} at 850 2100
\pinlabel {$\mathbbm{b}_{j+1}$} at 1300 1500
\pinlabel {$\mathbbm{b}_{j+2}$} at 1300 900
\pinlabel {$\mathbbm{b}_{j+3}$} at 910 280
\endlabellist
\begin{minipage}[c][.52\paperheight]{70mm}
\begin{raggedright}
\vspace{1.5cm}
\includegraphics[scale=.14]{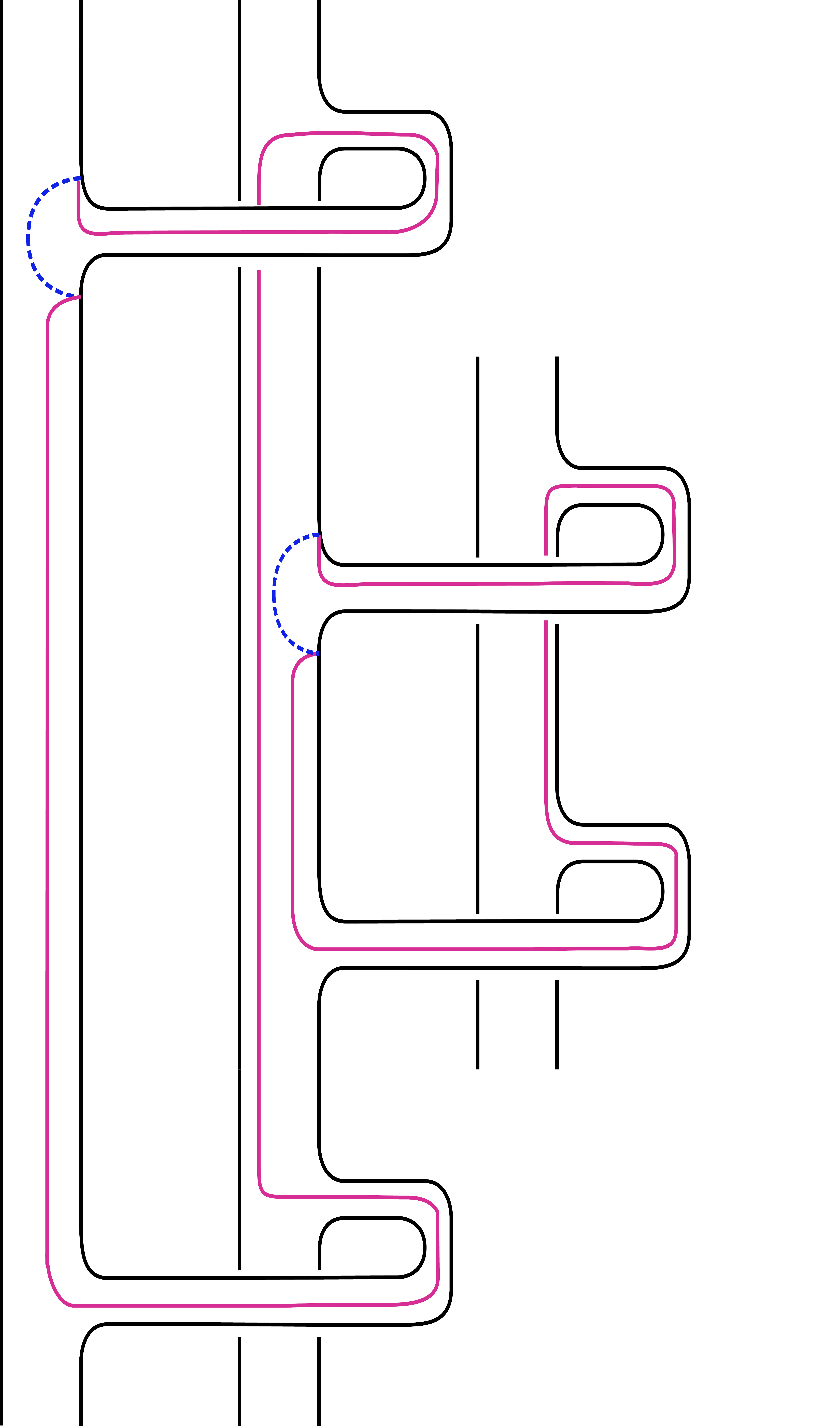}
\end{raggedright}
\captionof{figure}{There are two product disks \\ identified, $D_j$ and $D_{j+1}$. We have \\$\partial D_j \subset S_1 \cup S_2 \cup \mathbbm{b}_j \cup \mathbbm{b}_{j+3}$, and \\ $\partial D_{j+1} \subset S_2 \cup S_3 \cup \mathbbm{b}_{j+1} \cup \mathbbm{b}_{j+2}$. \\ The non-sutured portions of $\partial D_j$ \\ and $\partial D_{j+1}$ are $\alpha_j^- \cup \alpha_j^+$ and \\ $\alpha_{j+1}^- \cup \alpha_{j+1}^+$, respectively. \\ \\}
\label{fig:2productdisks}
\end{minipage}
\end{wrapfigure}

\begin{rmk}
Positive braid closures are obtained by a sequence of plumbings of positive Hopf bands. One can inductively apply Corollary 1.4 in \cite{Gabai:MurasugiSumII} to produce an explicit factorization of the monodromy in terms of Dehn twists.
\end{rmk}

\subsection{Constructing the Fiber Surface for Positive 3-braid Closures:} \label{FiberSurface3Braids}

Let $\beta$ be a positive 3-braid, where $\beta$ is not one of $\sigma_1^s, \sigma_2^s,$ or $\sigma_1 \sigma_2$. For such braids, conjugation and repeated applications of the braid relation $\sigma_2 \sigma_1 \sigma_2 = \sigma_1 \sigma_2 \sigma_1$ eliminate isolated instances of $\sigma_1$ \cite{Baader:PositiveBraidsSignature}. Thus, every such positive 3-braid can be written in the form
\begin{align} \label{3braidstandardform}
\beta = \sigma_1^{a_1} \sigma_2^{b_1} \sigma_1^{a_2} \sigma_2^{b_2} \ldots \sigma_1^{a_k} \sigma_2^{b_k}, \qquad \text{ where for all } i \leq k,\  2 \leq a_i \text{ and }1 \leq b_i \end{align}

Going forward, we assume all 3-braids are in this form.

\begin{defn}
Let $\beta$ be of the form described in (\ref{3braidstandardform}).  $\beta$ has \textbf{$k$ blocks}, where the $i^{\text{th}}$ block has the form $\sigma_1^{a_i} \sigma_2^{b_i}$. 
\end{defn}

\begin{defn}
Let $\hat{\beta}$ denote the closure of $\beta$, which is in the form specified by Equation \ref{3braidstandardform}. Define:
\begin{minipage}[r]{.05\paperheight}
\begin{center}
$$ \ \ \ \ c_1 := \sum_{i=1}^k a_i \qquad \qquad c_2 := \sum_{i=1}^k b_i$$
\end{center}
\end{minipage}
\end{defn}

Applying Seifert's algorithm to $\hat{\beta}$ yields Seifert disks $S_1, S_2, S_3$. Reading $\beta$ from left to right, each occurrence of $\sigma_i$ dictates the attachment of a positively twisted band between $S_i$ and $S_{i+1}$. 

\begin{defn}
For the $j^{\text{th}}$ letter $\sigma_i$ in the braid word $\beta$, denote the corresponding positively twisted band attached between $S_i$ and $S_{i+1}$ as $\mathbbm{b}_{j}$.
\end{defn}

The bands are attached from top to bottom; there are $c_1 + c_2$ bands attached in total. This is our fiber surface $F$ for $\hat{\beta}$. Following conventions established by Rudolph \cite{Rudolph:QPsliceness}, we only see $F^{+}$, the ``positive side" of $F$, in our figures.

\begin{defn}
The bands $\mathbbm{b}_{j}$ and $\mathbbm{b}_{k}$ are \textbf{of the same type} if they are both attached between the Seifert disks $S_i$ and $S_{i+1}$.
\end{defn}

It is straightforward to identify a collection of product disks for $F$: the boundary of a disk $D_j$ will be entirely contained in $\mathbbm{b}_{j}$, $\mathbbm{b}_k$ (the next band of the same type as $\mathbbm{b}_{j}$), and $S_i \cup S_{i+1} \cup A$ (where $S_i$ and $S_{i+1}$ are the Seifert disks to which $\mathbbm{b}_{j}$ and $\mathbbm{b}_{k}$ are attached). Decomposing $X_F$ along $c_1 + c_2 - 2$ disks results in a single product sutured ball. Since fiber surfaces are minimal genus Seifert surfaces, we conclude $\chi(F)=3 - (c_1 + c_2)$  and $2g(K)-1 = c_1 + c_2 - 3$.

\begin{defn}
Suppose a product disk has boundary contained in $\mathbbm{b}_{j}$ and $\mathbbm{b}_{k}$, which are bands of the same type with $j < k$. We refer to this disk as $D_{j}$. Furthermore, we denote the non-sutured portion of $\partial D_j$, $\overline{\partial D_{j}- A}$, by $\alpha_{j}^{+} \cup \alpha_{j}^{-}$, where $\alpha_j^\star \subset F^\star$.
\end{defn}

The product disk $D_j$ is completely determined by the arcs $\alpha_{j}^{-}$ and $\alpha_{j}^{+} \approx \varphi(\alpha_j^{-})$, so we use these arcs to identify product disks -- in particular, we will not include the interior of these disks in our figures. As in Figure \ref{fig:2productdisks}, we draw $\alpha_j^{\pm}$ on $F \times \left\{\frac{1}{2}\right\}$, not in $(X_F, K \times I)$.

\section{Foundations for Theorem \ref{thm:main}} \label{section:example}

This section provides the structure of proof of Theorem \ref{thm:main} and a series of important lemmas towards that end. We establish notation for constructing and analyzing branched surfaces in exteriors of positive 3-braid closures. The proof of Theorem \ref{thm:main}, in Section \ref{section:3braids}, requires analysis of 3 cases; we carry out the example of $P(-2,3,7)$ here alongside our preparatory material as motivation. This example already contains the richness of the several cases required to prove Theorem \ref{thm:main}.

We outline the construction of taut foliations in $S^3_r(K)$, $K$ realized as the closure of a positive 3-braid, $r \in (-\infty, 2g(K)-1)$: 

\begin{tabular}{ll}
\textit{Section \ref{subsection:step1}:} & Identify $c_1 + c_2-2$ disjoint product disks $\{D_j\}$ in $X_F$ \\
\textit{Section \ref{subsection:step2}:} & Isotope $\{D_j\}$ into a standardized position in $X_K$ \\
\textit{Section \ref{subsection:step3}:} & Build the spine of the branched surface in $X_K$ from a copy of the fiber surface $F$ \\
& and these standardized disks\\
\textit{Section \ref{subsection:step4}:} & Build the laminar branched surface $B$: \\
& \textit{Section \ref{subsection:step4a}:} Assign optimal co-orientations for the standardized $\{D_j\}$ \\
& \textit{Section \ref{subsection:step4b}:} Check $B$ is sink disk free \\
& \textit{Section \ref{subsection:step4c}:} Prove $B$ is a laminar branched surface \\
\textit{Section \ref{subsection:step5}:} & Construct taut foliations in $X_K$: \\
& \textit{Section \ref{subsection:step5a}:}  Show the boundary train track $\tau$ carries all slopes $(-\infty, 2g(K)-1)$ \\
& \textit{Section \ref{subsection:step5b}:}  Extend essential laminations to taut foliations in $X_K$ \\
& \textit{Section \ref{subsection:step5c}:} Produce taut foliations in $S^3_r(K)$ via Dehn filling
\end{tabular}

To begin our motivational example, we note that $P(-2,3,7)$ is the closure of a positive 3-braid. In particular, $P(-2,3,7) = \hat{\beta}$, for $\beta = \sigma_1^7\sigma_2^2 \sigma_1^2 \sigma_2$. 

\begin{figure}[h!]\center
\labellist
\pinlabel {$q$} at 68 72
\pinlabel {$q$} at 162 72
\pinlabel {$q$} at 255 86
\pinlabel {$q$} at 326 96
\endlabellist
\includegraphics[scale=1]{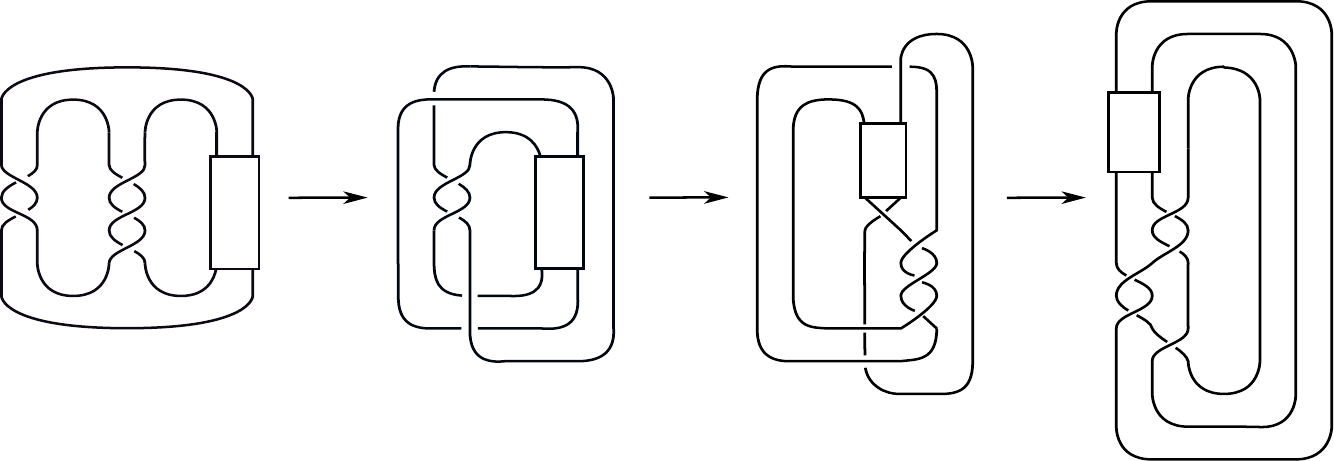}
\caption{An isotopy of $P(-2, 3, q)$, $q$ odd, $q\geq 1$ into the positive closed 3-braid $\hat{\beta}$, for $\beta = \sigma_1^q \sigma_2^2 \sigma_1^2 \sigma_2$.} 
\label{fig:pretzelknotisotopy}
\end{figure}

\subsection{Identify disjoint product disks $\{D_j\}$ in $X_F$.} \label{subsection:step1}

The setup in Section \ref{FiberSurface3Braids} supplies $c_1 + c_2 - 2$ product disks: take the product disks used to show $F$ is a fiber surface for $K$. 

Figure \ref{fig:fiber_surface_example} \ shows the fiber surface for $P(-2,3,7)$, and 10 product disks $\{D_1, \ldots D_{10}\}$. The disks $\{D_1, D_2, \ldots, D_7, D_{10}\}$ have boundaries contained in $\mathbbm{b}_{1} \cup \ldots \cup \mathbbm{b}_{7} \cup \mathbbm{b}_{10} \cup \mathbbm{b}_{11} \cup S_1 \cup S_2$; the disks $\{ D_8, D_9 \}$ have boundaries contained in $\mathbbm{b}_{8} \cup \mathbbm{b}_9 \cup \mathbbm{b}_{12} \cup S_2 \cup S_3$. The product disks $D_1, \ldots, D_{10}$ are disjoint in $X_F$, as $\alpha_1^-, \ldots, \alpha_{10}^-$ are pairwise disjoint.  

\subsection{Isotope $\{D_j\}$ into a standardized position in $X_K$} \label{subsection:step2}

The $c_1 + c_2 -2$ product disks found in Section \ref{subsection:step1} are contained in the surface exterior $X_F \approx \overline{X_K - (F \times [\frac{1}{4}, \frac{3}{4}])}$. Collapsing $F \times \left[ \frac{1}{4}, \frac{3}{4}\right]$ to $F \times \left\{\frac{1}{2}\right\}$ produces $c_1 + c_2 -2$ disks in $X_K$, with $\partial D_j \subset \left(F \times \{1/2\} \right) \cup \partial X_K$. 

Consider $(F \times \{\frac{1}{2}\} ) \cup (D_1 \cup \ldots \cup D_{c_1+c_2-2} )$ in $X_K$. This is the spine for a branched surface in $X_K$. For all $j \neq \ell$ and fixed $\star \in \{+, -\}$, the arcs $\alpha_{j}^{\star}$ and $\alpha_{\ell}^{\star}$ are disjoint on the fiber surface $F \times \{\frac{1}{2}\}$. However, for $j \neq \ell$, it is possible for $\alpha_{j}^{+}$ and $\alpha_{\ell}^{-}$ to intersect on $F \times \{\frac{1}{2}\}$; after smoothing, there will be many triple points, as in Figure \ref{fig:branched_surface}.

We want to simplify the forthcoming branched surface. To this end, we isotope the product disks $D_1, \ldots, D_{c_1+c_2-2}$ in $X_K$ such that the arcs $\{\alpha_j^{\pm}\}$ intersect minimally on $F \times \{\frac{1}{2}\}$. 

There are two types of intersection points between $\alpha_j^{+}$ and $\alpha_{\ell}^{-}$, $j \neq \ell$:
\begin{defn} 
A \textbf{Type 1 intersection point} arises from $\alpha_{j}^{+} \cap \alpha_{j+1}^{-}$, where $\mathbbm{b}_j$ and $\mathbbm{b}_{j+1}$ are bands of the same type.
A \textbf{Type 2 intersection point} arises from $\alpha_{j}^{+} \cap \alpha_{\ell}^{-}$, where $\mathbbm{b}_{j}$ and $\mathbbm{b}_\ell$ are bands associated to the last occurrences of $\sigma_1$ and $\sigma_2$ in the same block $\sigma_1^{a_i} \sigma_2^{b_i}$. 
\end{defn}

In Figure \ref{fig:fiber_surface_example}, we see nine triple points in the spine of $P(-2,3,7)$: there are eight Type 1 intersection points, and a single Type 2 intersection point. Lemma \ref{lemma:isotopy} will eliminate all Type 1 intersection points.

\begin{defn} 
Let $D_j$ be a product disk in the spine of a branched surface. A \textbf{spinal isotopy} $\iota_j: D_j \times [0,1] \to X_K$ is an isotopy of the disk $D_j$ in $X_K$ such that for all $t \in [0,1]$, 
\begin{itemize}
\item $\iota_j|_{\alpha_j^- \times \{t\}} = \mathbbm{1}$
\item $\iota_j(\alpha_j^+ \times \{t\}) \subset (F \times \{\frac{1}{2}\})^+$
\item $(\partial D \cap \partial X_K) \subset \partial X_K$
\item $\mathring{D} \subset X_K - (F \times \{\frac{1}{2}\})$
\end{itemize}
and $\iota_j(\alpha_j^+ \times \{1\}) \subset S_i$, where $i = 2,3$.
\end{defn}
Intuitively, allowing $\alpha_j^+$ to move freely along $F \times \{\frac{1}{2}\}$ guides an isotopy of $D_j$ in $X_K$.

\begin{defn}
An arc $\alpha_j^{+}$ is in \textbf{standard position} if it has been isotoped to lie entirely in a single Seifert disk $S_i, i = 2, 3$. A disk is in \textbf{standard position} if both $\alpha_j^+$ and $\alpha_j^{-}$ lie entirely in $S_1 \cup S_2 \cup S_3$.
\end{defn}

\begin{lemma} \label{lemma:isotopy}
There exists a sequence of $c_1+c_2-2$ spinal isotopies of the disks $D_1, \ldots, D_{c_1+c_2-2}$ putting all disks in standard position. Equivalently, there exists a splitting of the spine of the branched surface with no Type 1 intersection points, i.e. with $\alpha_1^+, \ldots, \alpha_{c_1+c_2-2}^+$ in standard position. 
\end{lemma}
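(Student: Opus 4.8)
The plan is to proceed inductively, eliminating Type 1 intersection points one at a time via spinal isotopies, working within a single block $\sigma_1^{a_i}\sigma_2^{b_i}$ and then across the whole braid word. Recall a Type 1 intersection point comes from $\alpha_j^+ \cap \alpha_{j+1}^-$ where $\mathbbm{b}_j$ and $\mathbbm{b}_{j+1}$ are bands of the same type, i.e. consecutive occurrences of the \emph{same} Artin generator. The key local observation is that $\alpha_j^+ \approx \varphi(\alpha_j^-)$, and for consecutive bands of the same type the arc $\alpha_j^+$ naturally runs across the sliver of the Seifert disk $S_i$ (or $S_{i+1}$) lying between the two parallel bands $\mathbbm{b}_j$ and $\mathbbm{b}_{j+1}$, together with a short passage over the band $\mathbbm{b}_{j+1}$ itself; this is exactly the configuration pictured in Figure \ref{fig:2productdisks}. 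The arc $\alpha_{j+1}^-$ begins on that same band $\mathbbm{b}_{j+1}$, which is the source of the single transverse intersection. A spinal isotopy $\iota_j$ that pushes $\alpha_j^+$ off of $\mathbbm{b}_{j+1}$ and entirely into the adjacent Seifert disk $S_i$ (with $i = 2$ if the bands are of $\sigma_1$-type, $i=3$ if of $\sigma_2$-type — matching the requirement $i=2,3$ in the definition of spinal isotopy) removes this intersection. One checks the four bulleted conditions in the definition of spinal isotopy are met: $\alpha_j^-$ is fixed throughout, $\alpha_j^+$ stays on the positive side of $F\times\{1/2\}$ as it slides, the boundary-on-$\partial X_K$ portion is untouched, and the disk's interior stays off the fiber.

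Next I would organize the induction. Process the bands in order along the braid word $\beta = \sigma_1^{a_1}\sigma_2^{b_1}\cdots\sigma_1^{a_k}\sigma_2^{b_k}$. Within the run $\sigma_1^{a_i}$ there are $a_i - 1$ consecutive same-type pairs, each contributing one Type 1 point; similarly $b_i - 1$ within $\sigma_2^{b_i}$. Apply the spinal isotopies from the innermost band outward (or from one end of each run to the other), so that after isotoping $D_j$, the arc $\alpha_j^+$ lies flat in $S_i$ and no longer obstructs the subsequent isotopy of $D_{j-1}$; because each isotopy only moves $\alpha_j^+$ and leaves every $\alpha_\ell^-$ alone, and leaves every already-standardized $\alpha_\ell^+$ (with $\ell > j$) alone once it is safely inside a Seifert disk, the isotopies do not interfere with one another. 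Summing over all runs gives exactly $\sum_i (a_i - 1) + \sum_i (b_i - 1) = (c_1 + c_2) - 2k$ Type 1 points removed — but note the statement claims $c_1+c_2-2$ \emph{isotopies}, one per disk; the disks whose $\alpha_j^+$ already lies in a single Seifert disk require only a trivial (identity) spinal isotopy, so the count $c_1+c_2-2$ is the total number of product disks and every one of them ends in standard position. I should be careful to phrase the conclusion as: after this sequence, each $\alpha_j^+$ lies in a single $S_i$, hence (since each $\alpha_j^-$ already lies in $S_1\cup S_2\cup S_3$ by construction of $F$ from Seifert's algorithm) each $D_j$ is in standard position, and the resulting split spine has no Type 1 intersection points.

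The main obstacle I anticipate is \emph{bookkeeping the order of isotopies so that they are genuinely disjoint and non-interfering}, and verifying that pushing $\alpha_j^+$ entirely into $S_i$ does not create \emph{new} intersections with other arcs $\alpha_\ell^-$ — in particular one must confirm that the only arcs whose starting band is $\mathbbm{b}_{j+1}$ are accounted for, and that the sliver-of-$S_i$ region into which $\alpha_j^+$ is pushed is not already occupied by some $\alpha_\ell^-$ with $\ell \neq j$. This is where the precise combinatorics of which bands are "of the same type" and the left-to-right band-attachment order matter, and it is cleanest to argue it locally within each maximal run $\sigma_1^{a_i}$ or $\sigma_2^{b_i}$: inside such a run the relevant arcs are nested/parallel in a predictable staircase pattern (again as in Figure \ref{fig:2productdisks}), so the isotopies can be performed simultaneously for the whole run by an "unzipping" motion, and the only surviving inter-arc intersections afterward are the Type 2 points between the last $\sigma_1$-band and the last $\sigma_2$-band of a block, which the lemma does not claim to remove. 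Once disjointness and non-interference are established, the rest is routine, and the "equivalently" clause follows immediately since a spinal isotopy of $D_j$ induces precisely a splitting move on the spine that slides the branch locus arc $\alpha_j^+$ across $F\times\{1/2\}$.
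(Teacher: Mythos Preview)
Your proposal is correct and uses the same underlying mechanism as the paper --- spinal isotopies pushing each $\alpha_j^+$ off its band and into the adjacent Seifert disk $S_2$ or $S_3$. The only real difference is organizational: the paper orders the isotopies by a single global bottom-to-top scan of the fiber surface (find the lowest $\alpha^+$ arc not yet standardized, push it down into its Seifert disk, repeat), rather than processing run-by-run within each maximal $\sigma_i$-block as you do. The advantage of the paper's ordering is that it dissolves precisely the obstacle you flag as your main concern: at each step the region of $S_i$ into which you push is automatically clear, because any arc that previously occupied it was standardized in an earlier iteration of the scan, and nothing below has an $\alpha^+$ arc reaching that high. Your block-wise ``unzipping'' works too, but requires the extra bookkeeping at block boundaries that you anticipate; the global scan sidesteps this entirely and makes the non-interference verification a one-line observation rather than a case analysis.
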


\newpage
\begin{figure}[h!]\center
\labellist
\pinlabel {$\mathbbm{b}_{1}$} at 200 1410
\pinlabel {$\mathbbm{b}_{2}$} at 200 1290
\pinlabel {$\mathbbm{b}_{3}$} at 200 1160
\pinlabel {$\mathbbm{b}_{4}$} at 200 1050
\pinlabel {$\mathbbm{b}_{5}$} at 200 920
\pinlabel {$\mathbbm{b}_{6}$} at 200 800
\pinlabel {$\mathbbm{b}_{7}$} at 200 690
\pinlabel {$\mathbbm{b}_{8}$} at 300 570
\pinlabel {$\mathbbm{b}_{9}$} at 300 450
\pinlabel {$\mathbbm{b}_{10}$} at 250 330
\pinlabel {$\mathbbm{b}_{11}$} at 250 220
\pinlabel {$\mathbbm{b}_{12}$} at 300 90 
\pinlabel {$\mathbbm{b}_{1}$} at 610 1410
\pinlabel {$\mathbbm{b}_{2}$} at 610 1290
\pinlabel {$\mathbbm{b}_{3}$} at 610 1160
\pinlabel {$\mathbbm{b}_{4}$} at 610 1050
\pinlabel {$\mathbbm{b}_{5}$} at 610 920
\pinlabel {$\mathbbm{b}_{6}$} at 610 800
\pinlabel {$\mathbbm{b}_{7}$} at 610 690
\pinlabel {$\mathbbm{b}_{8}$} at 700 570
\pinlabel {$\mathbbm{b}_{9}$} at 700 450
\pinlabel {$\mathbbm{b}_{10}$} at 650 330
\pinlabel {$\mathbbm{b}_{11}$} at 650 220
\pinlabel {$\mathbbm{b}_{12}$} at 710 90 
\endlabellist
\includegraphics[scale=.349]{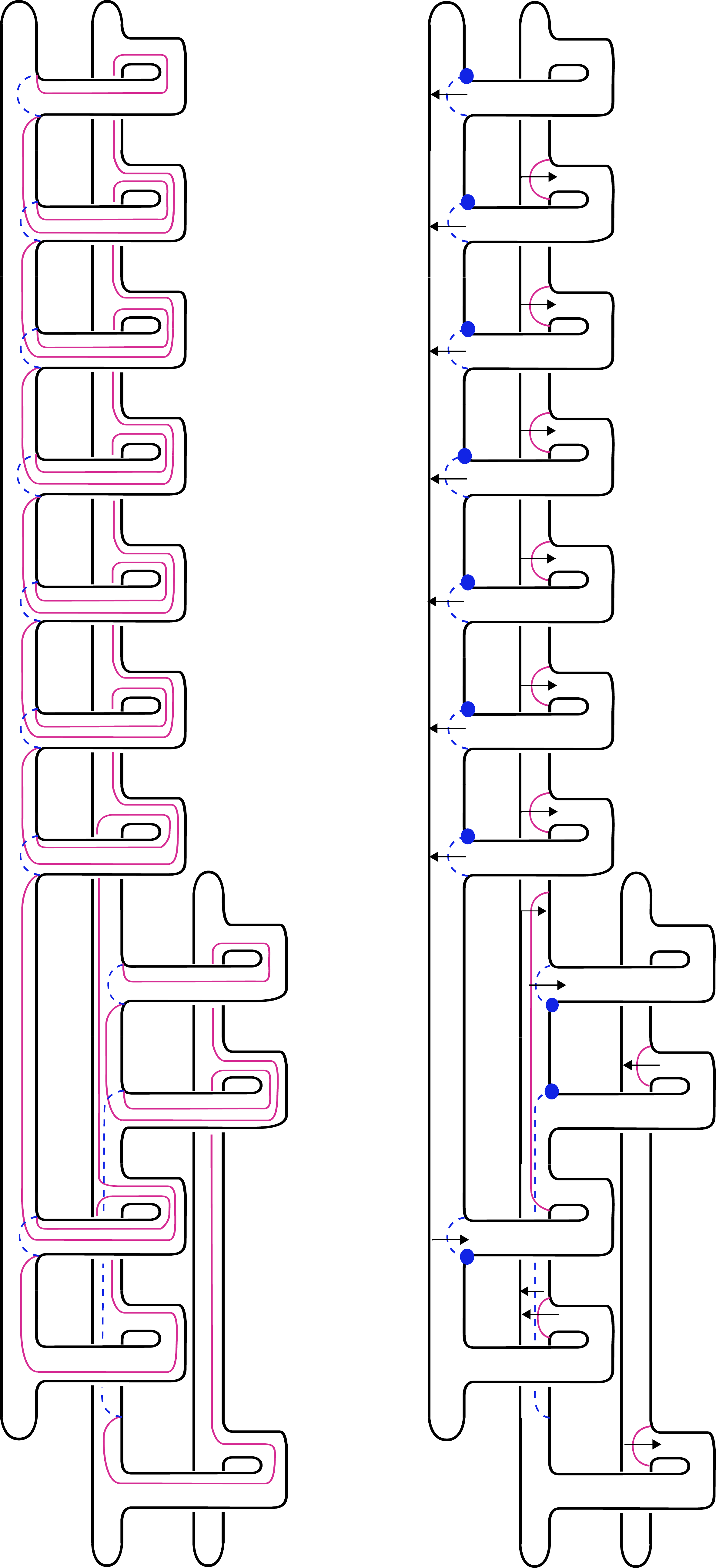}
\caption{On the left: the fiber surface and 10 product disks for $P(-2,3,7)$. On the right: the laminar branched surface for $P(-2,3,7)$ with cusping directions $(\leftarrow)^7(\rightarrow)(\leftarrow)(\rightarrow)(\ \ ) (\ \ )$.} 
\label{fig:fiber_surface_example}
\end{figure}
\newpage

\begin{proof} 
Scanning the diagram of $F \times \{\frac{1}{2}\}$ from bottom to top, find the first arc $\alpha_{s}^{+}$ encountered. The last letter of $\beta$ is $\sigma_2$, so $\alpha_s^{+} \subset \mathbbm{b}_s \cup \mathbbm{b}_{c_1+c_2} \cup S_2 \cup S_3$, with $s < c_1 + c_2$. If we allow \textit{free} isotopy of arcs in $F \times \frac{1}{2}$ (i.e. an isotopy $i_s$ of $\alpha_s^+$ where the endpoints of the arc can move along $\partial F$), $\alpha_s^{+}$ can be isotoped to lie entirely in $S_3$. Let $\iota_s$ be the spinal isotopy of $D_s$ in $X_K$ such that for all $t$, $\iota_s (\alpha_s^+ \times \{t\}) = i_s(\alpha_s^+ \times \{t\})$. Applying $\iota_s$ puts $D_s$ in standard position.

Continue scanning the diagram from bottom to top, and find the next arc $\alpha_r^{+}$ encountered. Apply the spinal isotopy $\iota_r$ of $D_r$ in $X_K$ such that $\iota_r|_{\alpha_{r}^{+} \times \{t\}}$ pushes $\alpha_r^{+}$ into standard position. After $c_1+c_2-2$ iterations of this procedure (finding the next arc $\alpha_m^{+}$ encountered, and putting the disk $D_m$ in standard position via $\iota_m$), all disks are standardized. A Type 1 intersection between $\alpha_t^{+}$ and $\alpha_{t+1}^{-}$ is eliminated by the isotopy $\iota_t$ standardizing $D_t$. 
\end{proof}

\begin{rmk}
The pre- and post- split spine have isotopic exteriors. 
\end{rmk}

For $P(-2,3,7)$, the arcs get isotoped in the following order: $$\alpha_9^{+}, \alpha_{10}^{+}, \alpha_7^{+}, \alpha_8^{+}, \alpha_6^{+}, \alpha_5^{+}, \alpha_4^{+}, \alpha_3^{+}, \alpha_2^{+}, \alpha_1^{+}$$
The result of applying Lemma \ref{lemma:isotopy} is seen in the right diagram in Figure \ref{fig:fiber_surface_example}. There is a single Type 2 intersection point between $\alpha_7^{+}$ and $\alpha_9^{-}$.

Going forward, all disks $D_j$ are in standard position, unless stated otherwise. We will \textbf{not} change our notation to indicate the disks are standardized. \\

\subsection{Build the spine of the branched surface} \label{subsection:step3}

The spine for the branched surface is built from $$\left(F \times \{1/2\}\right) \cup \left(\bigcup_{i=1}^{c_1+c_2-2} D_i\right)$$ 

For $P(-2,3,7)$, the spine for the branched surface is in Figure \ref{fig:fiber_surface_example}.

\subsection{Build the branched surface $B$} \label{subsection:step4}

To build the laminar branched surface, we need to assign co-orientations for the disks $D_j, \ 1 \leq j \leq c_1+c_2-2$, and verify these choices do not create sink disks. To achieve these goals, we study the branch locus and branch sectors. 

Lemma \ref{lemma:isotopy} simplified the branch locus: all arcs $\alpha_j^{\pm}$, $1 \leq j \leq c_1+c_2-2$ are now contained in $S_1 \cup S_2 \cup S_3$. Moreover, arcs $\alpha_{j}^{-}$ are isotopic to the co-cores of bands $\mathbbm{b}_{j}$, or would be if other bands were not obstructing the path of the lower endpoint.

For $P(-2,3,7)$,
\begin{itemize}
\item the arcs $\alpha_{1}^{-}, \ldots, \alpha_7^-, \alpha_{10}^-$, contained in $S_1$, are isotopic to the co-cores of the 1-handles $\mathbbm{b}_{1}, \ldots, \mathbbm{b}_{7}, \mathbbm{b}_{10}$ respectively. 
\item the arc $\alpha_8^{-}$ is isotopic to the co-core of $\mathbbm{b}_8$.
\item the arcs $\alpha_{1}^{+}, \ldots, \alpha_{6}^{+}, \alpha_{10}^{+}$ are isotopic to the co-cores of the 1-handles $\mathbbm{b}_{2}, \ldots \mathbbm{b}_7, \mathbbm{b}_{11}$, respectively, and are contained in $S_2$.
\item the $\alpha_{8}^{+}$ is isotopic to the co-core of $\mathbbm{b}_{9}$, and is contained in $S_3$.
\item the two arcs $\alpha_9^{-}$ and $\alpha_7^+$ are not isotopic to the co-cores of any bands.
\end{itemize}

Cusp directions for the disks have yet to be assigned. Nevertheless, we know the branch sectors for $B$ will fall into two categories: the sectors that lie in $F \times \{\frac{1}{2}\}$, and sectors arising from isotoped product disks. The former can be further refined into 3 categories: 
\begin{defn} \label{defn:typesofbranchsectors}
The \textbf{$S_i$ disk sector} is the connected component of a branch sector containing the Seifert disk $S_i$. A \textbf{band sector} is the connected component of a branch sector associated to a positively twisted band. The remaining branch sectors are \textbf{polygon sectors}; each lies in a single Seifert disk. 
\end{defn}

In particular, all polygon sectors lie in $S_2$. For $P(-2,3,7)$, there are 7 band sectors (the branch sectors containing $\mathbbm{b}_{2}, \ldots, \mathbbm{b}_{7} \cup \mathbbm{b}_9 \cup \mathbbm{b}_{10}$), and a pair of polygon sectors.

\subsubsection{\textbf{\textup{Assign optimal co-orientations to $\{D_j\}$}}} \label{subsection:step4a}

\begin{defn}
Let $\widehat{\alpha}_j^\star$ denote the cusp direction of $\alpha_j^\star$, for $\star \in \{ +, - \}$. 
\end{defn}

\begin{lemma} \label{lemma:cuspdirections}
Assigning a co-orientation to $D_j$ determines the cusp orientation to both $\alpha_{j}^{+}$ and $\alpha_j^{-}$. 
Moreover, if we orient the arcs $\alpha_j^{\pm}$ from the lower endpoint to the upper endpoint, the pairings $\langle \alpha_j^+, \widehat{\alpha}_j^+ \rangle$ and $\langle \alpha_j^-, \widehat{\alpha}_j^- \rangle$ have opposite signs. 
\end{lemma}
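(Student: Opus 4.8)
The plan is to analyze the local model at a co-oriented product disk. A product disk $D_j$ is abstractly $\alpha_j \times I$, with $\alpha_j^- = \alpha_j \times \{0\}$ and $\alpha_j^+ = \alpha_j \times \{1\}$, both lying on $F \times \{1/2\}$ after collapsing; choosing a co-orientation for $D_j$ is exactly choosing a transverse direction to $D_j$ in $X_K$. Recall from the definition of the branched surface that at each smooth arc of the branch locus the cusp (branch) direction is determined by the co-orientations of the two sheets meeting along that arc together with the convention that the lamination leaves are smoothed toward the side the cusp points. Along $\alpha_j^-$ the two sheets are $D_j$ and the piece of $F \times \{1/2\}$ lying on the $F^-$ side; along $\alpha_j^+$ the two sheets are $D_j$ and the piece of $F \times \{1/2\}$ lying on the $F^+$ side. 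So first I would make precise that the cusp direction $\widehat{\alpha}_j^\star$ is a well-defined transverse vector field to $\alpha_j^\star$ inside the surface $F \times \{1/2\}$ (a co-orientation of the arc within the fiber), and that it is completely determined once the co-orientation of $D_j$ is fixed, because the co-orientation of the fiber sheet is globally fixed (the fiber $F$ carries a preferred side, namely $F^+$). This establishes the first sentence.

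For the second sentence, the key geometric observation is that $D_j$ meets the fiber $F \times \{1/2\}$ along the two arcs $\alpha_j^-, \alpha_j^+$ which together form (essentially) the boundary $\partial D_j$ minus the sutured part, i.e. $D_j$ is a bigon-type disk pushed off to one side of $F$ on its "$-$" end and the other side on its "$+$" end — concretely, $D_j$ sits on the $F^+$ side near $\alpha_j^+$ and must come back around; in the mapping-torus picture $D_j$ is swept out by pushing $\alpha_j^-$ across the complementary handlebody to $\alpha_j^+ \approx \varphi(\alpha_j^-)$. The point is that the co-orientation of $D_j$, restricted to a collar of $\alpha_j^-$, points from $D_j$ into $X_K$ on one definite side, and the induced cusp direction on $\alpha_j^-$ (a vector tangent to $F$ pointing away from $D_j$ along $F$) is read off; doing the same at $\alpha_j^+$, the fact that $D_j$ approaches $F$ from opposite sides ($F^-$ versus $F^+$) at the two ends flips the relationship. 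After orienting both $\alpha_j^\pm$ from lower to upper endpoint, "away from $D_j$ along $F$" becomes a left-or-right choice relative to that orientation, and the sign of $\langle \alpha_j^\star, \widehat{\alpha}_j^\star\rangle$ records left vs. right; the side-flip at the two ends is exactly the assertion that these two signs are opposite.

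I would organize the proof as: (1) fix notation for the co-orientation $n_j$ of $D_j$ and recall the smoothing convention, deducing $\widehat{\alpha}_j^\pm$ are determined; (2) set up the local product structure of a collar of $D_j$ in $X_K$, identifying which side of $F$ the disk occupies near each of $\alpha_j^-$ and $\alpha_j^+$ — here I would lean on the standard-position description from Lemma~\ref{lemma:isotopy} and the picture of Figure~\ref{fig:2productdisks}, where one literally sees $\partial D_j$ entering a band on the $F^-$ co-core side and exiting on the $F^+$ co-core side; (3) compute $\langle \alpha_j^+, \widehat{\alpha}_j^+\rangle$ and $\langle \alpha_j^-, \widehat{\alpha}_j^-\rangle$ in that local model with respect to the lower-to-upper orientation, and observe the two computations differ by the $F^- \leftrightarrow F^+$ reflection, hence differ in sign. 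The main obstacle I expect is step (2): making the "opposite sides of $F$" claim rigorous and orientation-consistent rather than merely picture-plausible — in particular being careful that the ambient orientation of $X_K \subset S^3$, the co-orientation of $F$, and the chosen lower-to-upper orientation of the arcs are all reconciled so that the sign flip is genuine and not an artifact of sloppy bookkeeping. Once the local model is nailed down, the sign computation itself is routine.
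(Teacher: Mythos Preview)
Your proposal is correct and follows essentially the same approach as the paper: both argue that the fixed co-orientation of $F$ together with a choice of co-orientation on $D_j$ determines the smoothing (hence the cusp directions $\widehat{\alpha}_j^\pm$), and both deduce the sign flip from a local model in which $D_j$ meets $F \times \{\tfrac12\}$ from opposite sides along $\alpha_j^-$ and $\alpha_j^+$. The one organizational difference is that the paper works first with the \emph{pre}-standardized disk (where the local model near $\partial X_K$ is cleanest, cf.\ Figure~\ref{fig:cusping_a_product_disk}) and then observes that the spinal isotopy $\iota_j$ preserves the relative positions of the endpoints of $\alpha_j^+$, so the conclusion persists after standardization; you instead propose to work directly in standard position, which is fine but makes your step~(2) slightly more delicate for exactly the reason you flag.
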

Heuristically: the induced cusp orientations of $\alpha_j^{+}$ and $\alpha_j^{-}$ ``point in opposite directions" when looking at $(F \times \{\frac{1}{2}\})^+$.

\begin{figure}[h!]\center
\labellist
\pinlabel {$F \times \frac{1}{2}$} at 400 130
\pinlabel {$F \times \frac{1}{2}$} at 155 130
\pinlabel {$D^2$} at 310 10 
\pinlabel {$D^2$} at 65 10 
\endlabellist
\includegraphics[scale=1]{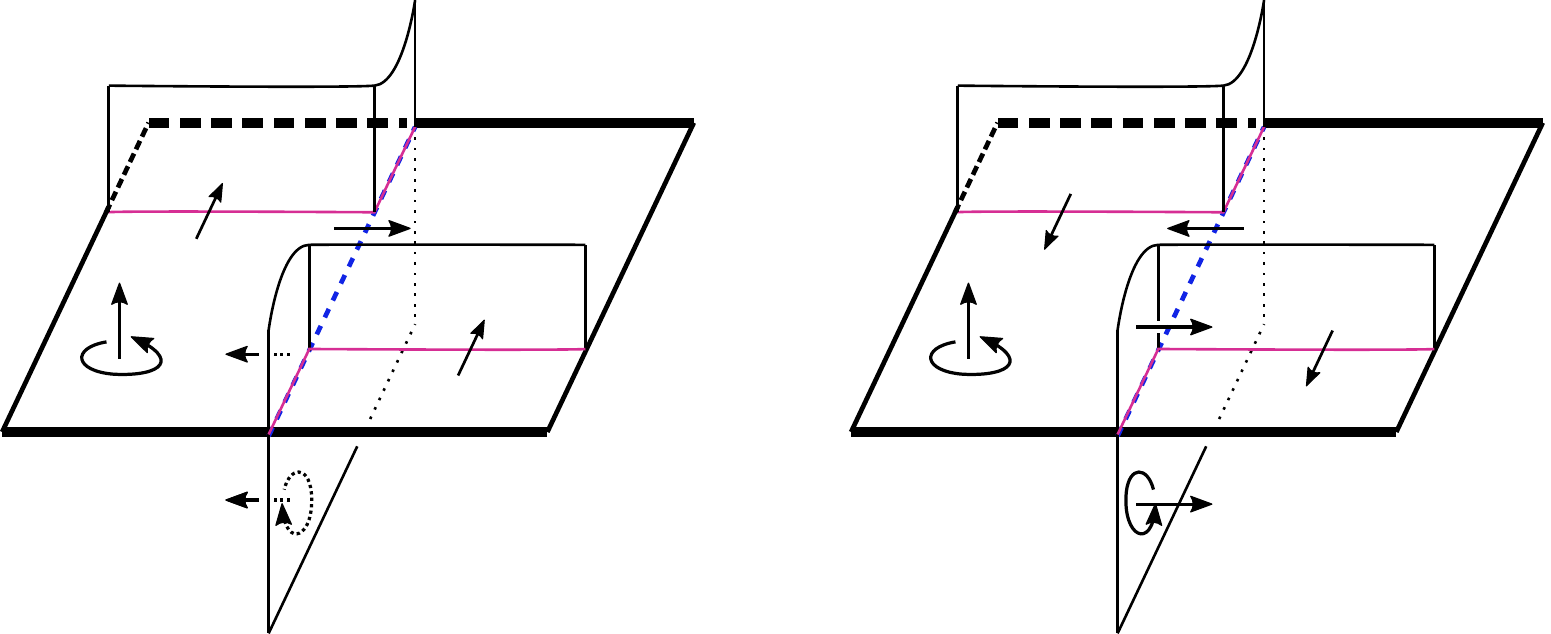}
\caption{In this local model, we have fixed a co-orientation on $F \times \{\frac{1}{2}\}$, and chosen different co-orientations on $D_j$ in the left and right figures. The correct cusping choices for $\alpha_j^\pm$ are provided. The \textbf{bolded} horizontal lines lie on $\partial X_K$.} 
\label{fig:cusping_a_product_disk}
\end{figure}

\begin{proof} 
For simplicity, assume the disk has yet to be standardized. Choose a co-orientation on $D_j$. Since $F$ is co-oriented, the correct smoothing choices for $\alpha_j^+$ and $\alpha_j^{-}$ ensure the co-orientations of $F$ and $D_j$ agree near the branch locus. The corresponding cusp directions for $\alpha_j^{\pm}$ can be determined immediately, as in the local model in Figure \ref{fig:cusping_a_product_disk}: if the cusp direction on $\alpha_j^{-}$ points to the right (resp. left) near $\partial X_K$, then the cusp direction on $\alpha_j^{+}$ points to the left (resp. right) near $\partial X_K$. Taking a global viewpoint as in Figure \ref{fig:global_cusping_product_disk}, orient the arcs $\alpha_j^\pm$ from the lower endpoint to the upper endpoint: the pairings $\langle \alpha_j^{\pm}, \widehat{\alpha}_j^{\pm} \rangle$ have opposite signs, and the cusp directions point in opposite directions when looking at $(F \times \{\frac{1}{2}\})^{+}$. Our isotopy $\iota_t$ of $D_t$ preserves the relative positions of the upper and lower endpoints of $\alpha_t^{+}$, so the lemma holds for standardized disks.
\end{proof}

\begin{rmk} 
In addition to establishing conventions about cusp directions, Figure \ref{fig:global_cusping_product_disk} indicates special (i.e. bolded) endpoints. The meaning of the bolding is postponed until Definition \ref{defn:endpoints} (it will become relevant when computing the slopes carried by the branched surface).
\end{rmk}

\begin{figure}[h!]\center
\labellist
\pinlabel {$\alpha_j^-$} at -40 400
\pinlabel {$\alpha_j^-$} at 600 400
\pinlabel {$\mathbbm{b}_j$} at 450 450
\pinlabel {$\mathbbm{b}_j$} at 1100 450
\pinlabel {$\mathbbm{b}_{j+1}$} at 470 150
\pinlabel {$\mathbbm{b}_{j+1}$} at 1120 150
\endlabellist
\includegraphics[scale=.25]{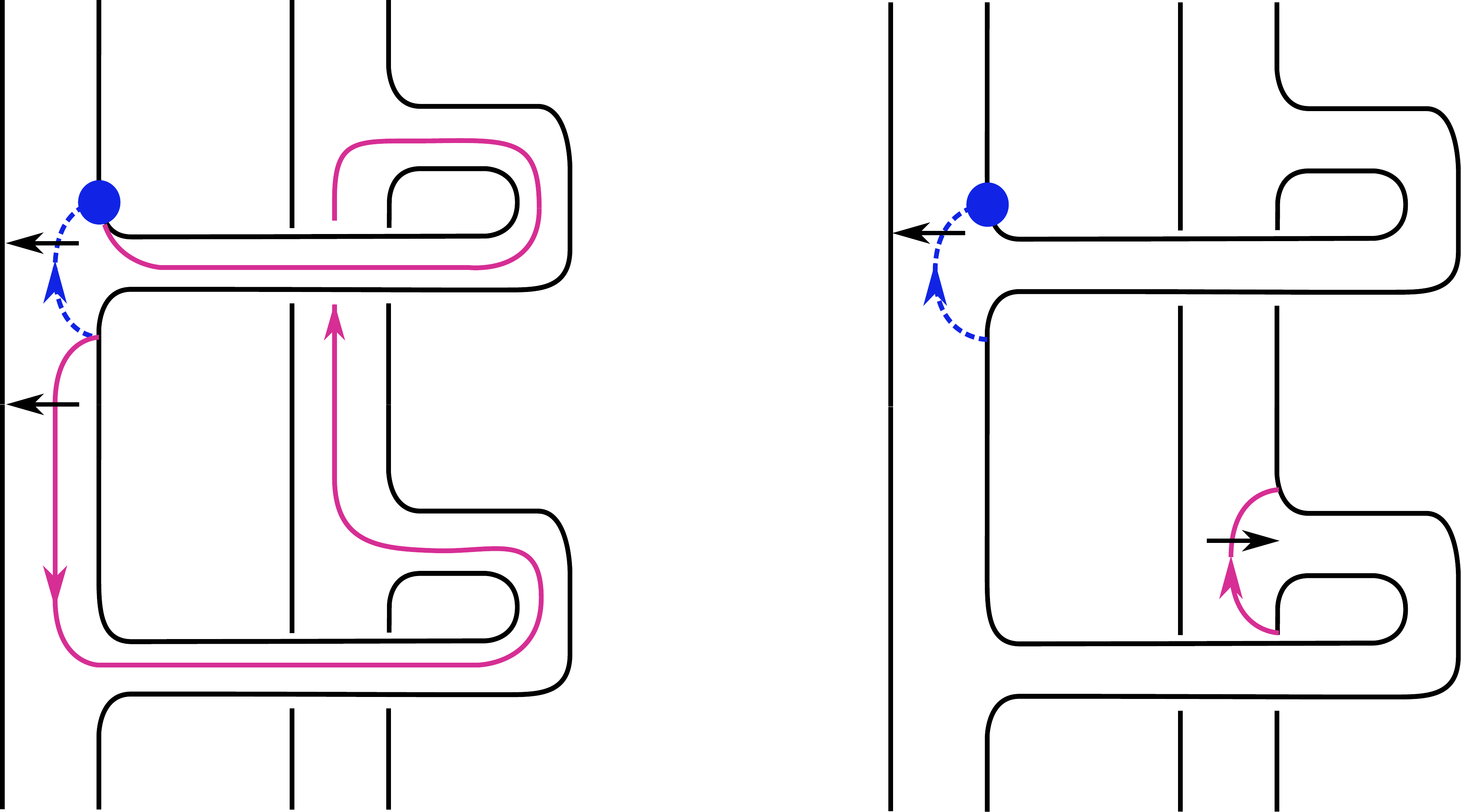}
\caption{After standardizing, $\widehat{\alpha}_j^-$ and $\widehat{\alpha}_j^+$ ``point in opposite directions".} 
\label{fig:global_cusping_product_disk}
\end{figure}

The cusp direction of $\alpha_j^{-}$ determines the co-orientation of $D_j$. Moreover, the upper endpoint of $\alpha_j^-$ is planted above the attachment site of the 1-handle $\mathbbm{b}_j$, which in turn is associated to the $j^{\text{th}}$ letter $\sigma_i$ of $\beta$. Therefore, we can encode the co-orientation of $D_j$ directly to $\mathbbm{b}_j$, via the induced cusp orientation on $\alpha_j^{-}$.

\begin{defn} \label{defn:cuspbraidword} We encode the co-orientation of $D_j$ by recording the cusp direction of $\alpha_i^{-}$ in tandem with $\beta$. For $\sigma$ the $j^{\text{th}}$ letter of $\beta$:
\begin{itemize}
\item Writing $\leftarrow$ below $\sigma$ indicates $\langle \widehat{\alpha}_j^-, \alpha_j^- \rangle = 1$ and $\langle \widehat{\alpha}_j^+, \alpha_j^+ \rangle = -1$. That is, $\alpha_j^-$ is cusped ``to the left", and $\alpha_{j}^{+}$ is cusped ``to the right" when looking at $(F \times \{\frac{1}{2}\})^+$.
\item Writing $\rightarrow$ below $\sigma$ indicates $\langle \widehat{\alpha}_j^-, \alpha_j^- \rangle = -1$ and $\langle \widehat{\alpha}_j^+, \alpha_j^+ \rangle = 1$. That is, $\alpha_j^-$ is cusped ``to the right", and $\alpha_{j}^{+}$ is cusped ``to the left" when looking at $(F \times \{\frac{1}{2}\})^+$
\item Writing $( \ \ )$ below $\sigma$ indicates \textbf{not} choosing the product disk $D_j$ with pre-standardized arc $\alpha_{j}^{+}$ passing through this 1-handle. We say $\sigma$ is \textbf{uncusped}.
\end{itemize}
\end{defn}

$P(-2,3,7)$ is realized as the closure of $\beta = \sigma_1^7 \sigma_2^2 \sigma_1^2 \sigma_2 = \sigma_1^7 \sigma_2 \sigma_2 \sigma_1 \sigma_1 \sigma_2$. The cusping directions in (\ref{cuspdirectionsexample}) below determine a branched surface -- it specifies which product disks to choose when building the spine, and how to co-orient them, as in Figure \ref{fig:fiber_surface_example}.
\begin{align}
&\ \sigma_1^7 \ \ \ \ \sigma_2 \ \ \sigma_2 \ \  \sigma_1 \ \  \sigma_1 \ \  \sigma_2 \nonumber \\
&(\leftarrow)^7 (\rightarrow ) (\leftarrow) (\rightarrow) (\ \ ) (\ \ ) \label{cuspdirectionsexample}
\end{align}

We emphasize: directions, as in (\ref{cuspdirectionsexample}), completely determine a branched surface. In Section \ref{section:buildingbranchedsurfaces}, we assign cusp directions for an arbitrary positive 3-braid closure.

\subsubsection{\textbf{\textup{Check $B$ is sink disk free}}} \label{subsection:step4b}

\begin{lemma} \label{lemma:productdisksinkdisk}
A branch sector arising from an isotoped product disk is never a sink disk.
\end{lemma}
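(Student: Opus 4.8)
The key observation is that a branch sector $S$ arising from an isotoped product disk $D_j$ inherits two distinguished arcs in its boundary, namely the smooth (cusped) arcs $\alpha_j^{-}$ and $\alpha_j^{+}$, together with whatever arcs of $\partial X_K$ lie between their endpoints. The plan is to exploit Lemma~\ref{lemma:cuspdirections}: the induced cusp directions $\widehat{\alpha}_j^{-}$ and $\widehat{\alpha}_j^{+}$ ``point in opposite directions'' when viewed on $(F\times\{\tfrac12\})^+$, so when we read off the branch directions along $\partial S$, one of these two arcs necessarily has its cusp pointing \emph{out} of $S$. By the remark following the definition of sink disk --- ``to prove a branched surface is sink disk free, we need only check that some cusped arc points out of each branch sector'' --- this immediately prevents $S$ from being a sink disk or a half sink disk.

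First I would set up the local picture: $S$ is the sector of $\overline{B-\gamma}$ whose interior is (a neighborhood of) $\mathring{D_j}$, and whose boundary consists of the two smooth arcs $\alpha_j^{+}\subset F^+$ and $\alpha_j^{-}\subset F^-$ — meeting $F\times\{\tfrac12\}$ along the branch locus — glued up along arcs $\partial D_j\cap\partial X_K$ on the boundary torus. (These $\partial X_K$-arcs are exactly the portions of the suture $A$ that $\partial D_j$ traverses; there are two of them since $\partial D_j$ meets $A$ twice.) Next I would invoke Lemma~\ref{lemma:cuspdirections}: fixing the orientation of each $\alpha_j^\star$ from its lower to its upper endpoint, the pairings $\langle\alpha_j^+,\widehat{\alpha}_j^+\rangle$ and $\langle\alpha_j^-,\widehat{\alpha}_j^-\rangle$ have opposite signs. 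Translating this into the sector $S$: going around $\partial S$ in a consistent direction, the cusp of $\alpha_j^-$ points into $S$ exactly when the cusp of $\alpha_j^+$ points out of $S$, and vice versa. Either way, at least one of the two cusped arcs in $\partial S$ has its branch direction pointing out of $S$.

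Finally I would conclude: whether or not $S$ is a topological disk and whether or not $\partial S$ meets $\partial X_K$ (i.e.\ regardless of whether the relevant prohibition is ``sink disk'' or ``half sink disk''), condition (3) in the definition fails, because one of $\alpha_j^{+}$, $\alpha_j^{-}$ is a smooth arc of $\partial S$ disjoint from $\partial X_K$ whose branch direction points out of $S$. (Both $\alpha_j^\pm$ lie on $F\times\{\tfrac12\}$, hence in $\mathrm{int}(X_K)$, so neither lies in $\partial X_K$; thus the ``point out'' condition is genuinely violated in the part of $\partial S$ away from $\partial X_K$.) Hence $S$ is never a sink disk or half sink disk.

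\textbf{Anticipated obstacle.}
The only subtlety I expect is bookkeeping with orientations: one must be careful that ``the cusp of $\alpha_j^-$ points into $S$'' and ``the cusp of $\alpha_j^+$ points into $S$'' are genuinely complementary, rather than both-or-neither. This is where Lemma~\ref{lemma:cuspdirections} (and the ``opposite directions'' heuristic illustrated in Figure~\ref{fig:global_cusping_product_disk}) does the real work — the sign comparison there is precisely what rules out the bad case. A second, more minor point is to make sure the standardizing isotopies $\iota_t$ of Lemma~\ref{lemma:isotopy} don't disturb this: since $\iota_t$ fixes $\alpha_t^-$ pointwise and preserves the relative position of the endpoints of $\alpha_t^+$, the cusp-direction comparison is unaffected, exactly as in the last sentence of the proof of Lemma~\ref{lemma:cuspdirections}.
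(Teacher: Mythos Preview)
Your proposal is correct and follows essentially the same approach as the paper: both arguments invoke Lemma~\ref{lemma:cuspdirections} to see that the cusp directions $\widehat{\alpha}_j^+$ and $\widehat{\alpha}_j^-$ point in opposite directions relative to $(F\times\{\tfrac12\})^+$, so exactly one of them points out of the product disk sector $D_j$, preventing it from being a (half) sink disk. Your version is more detailed in spelling out the boundary decomposition of the sector and explicitly noting that the standardizing isotopies do not disturb the conclusion, but the core idea is identical to the paper's two-line proof.
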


\begin{proof}
Let $D_j$ be any product disk sector. By Lemma \ref{lemma:cuspdirections}, the pairings $\langle \alpha_j^+, \widehat{\alpha}_j^+ \rangle$ and $\langle \alpha_j^-, \widehat{\alpha}_j^- \rangle$ have opposite signs. Therefore, one of $\widehat{\alpha}_j^+$ and $\widehat{\alpha}_j^-$ points out of $(F \times \{ \frac{1}{2}\})^{+}$ and into $D_j$ (and vice-versa for the other). It is impossible for both cusp directions to point into $D_j$.
\end{proof}
In Section \ref{section:3braids}, we develop techniques for determining which cusping directions (as in (\ref{cuspdirectionsexample})) create sink disks. For the branched surface $B$ for $P(-2,3,7)$, we already identified the branch sectors on $F \times \{\frac{1}{2}\}$, so verifying $B$ is sink disk free is straightforward. To show a branch sector is not a half sink disk, we need only check some cusped arc $\widehat{\alpha}_j^\star$ points out of it.

\begin{itemize}
\item The Disk Sectors
	\begin{itemize}
		\item $S_1$ is not a sink disk, because $\widehat{\alpha}_{10}^{-}$ points out of it.
		\item $S_2$ is not a sink disk, because $\widehat{\alpha}_{1}^{+}$ points out of it.
		\item $S_3$ is not a sink disk, because $\widehat{\alpha}_{9}^{+}$ points out of it.
	\end{itemize}
\item The Band Sectors
	\begin{itemize}
		\item The sectors $\mathbbm{b}_{2}, \ldots, \mathbbm{b}_{7}$ have $\widehat{\alpha}_{2}^{-}, \ldots, \widehat{\alpha}_{7}^{-}$ pointing out of the respective regions.
		\item The band sector containing $\mathbbm{b}_{9} \cup \mathbbm{b}_{10}$ in the boundary has $\widehat{\alpha}_{9}^{-}$ pointing out of it.
	\end{itemize}
\item The Polygon Sectors
	\begin{itemize}
		\item The boundary of the \textbf{upper polygon sector} $P_u$ is contained in $\alpha_{7}^{+} \cup \alpha_{8}^{-} \cup \alpha_{9}^{-} \cup \ \partial F$; $\widehat{\alpha}_{8}^{-}$ points out of the sector.
		\item The boundary of the \textbf{lower polygon sector} $P_\ell$ is contained in $\alpha_{7}^{+} \cup \alpha_{9}^{-} \cup \alpha_{10}^{+} \cup \ \partial F$; $\widehat{\alpha}_{9}^{-}$ points out of the sector.	\end{itemize}
\end{itemize}

\subsubsection{\textbf{\textup{$B$ is a laminar branched surface}}} \label{subsection:step4c}

\begin{prop} \label{prop:laminarbranchedsurface}
A sink disk free branched surface $B$, constructed from a copy of the fiber surface and a collection of product disks, is a laminar branched surface.
\end{prop}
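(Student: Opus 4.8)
The plan is to verify conditions (1)--(4) of Theorem~\ref{thm:taolisinkdisk} for any branched surface $B$ built from a copy of the fiber surface $F \times \{\tfrac{1}{2}\}$ together with a family of standardized product disks. Condition~(4), that $B$ is sink disk free, is a hypothesis of the proposition, so only (1)--(3) require argument. The main observation driving the whole proof is that $M - \mathrm{int}(N(B))$ has a very concrete description: the complement of $N(F \times \{\tfrac{1}{2}\})$ in $X_K$ is the surface exterior $X_F \approx F \times I$, which is a handlebody, and drilling out the chosen product disks corresponds precisely to the product disk decompositions from Gabai's theorem that terminate in a product sutured ball. So $M - \mathrm{int}(N(B))$ is (a neighborhood of) a 3-ball, possibly with the fibered structure recording how the disks were standardized.

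First I would set up this identification carefully: $N(B)$ deformation retracts onto $B = (F \times \{\tfrac{1}{2}\}) \cup \bigcup_j D_j$, and $\overline{X_K - N(B)}$ is obtained from $X_F$ by cutting along the $D_j$. Because the $\{D_j\}$ are exactly the product disks whose decomposition certifies that $F$ is a fiber surface (Section~\ref{FiberSurface3Braids}, via Gabai's Theorem~1.9 in \cite{Gabai:Fibered}), and since the spinal isotopies of Lemma~\ref{lemma:isotopy} do not change the homeomorphism type of the exterior (the remark after Lemma~\ref{lemma:isotopy}), we get $\overline{X_K - N(B)} \approx (B^3, S^1 \times I)$, a product sutured ball. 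From here each condition follows: (2) a ball is irreducible, and the horizontal/vertical boundary decomposition of a product sutured ball has $\partial M - \mathrm{int}(N(B))$ (the vertical annulus, a neighborhood of $K \times I$) incompressible; (1c) $\partial_h N(B)$ consists of copies of $F^{\pm}$ pieces and disk pieces $D_j^{\pm}$ glued along the branch locus, and no component is a sphere or a properly embedded disk in $M$ since $F$ has nonempty boundary on $\partial X_K$ and the $D_j$ are product disks meeting $A$; (1a)/(1b) incompressibility, $\partial$-incompressibility, and the no-monogon condition follow because $\partial_h N(B)$ sits inside the boundary of a ball, so any compressing disk or monogon would have to be swallowed by the ball, contradicting that the fiber surface $F$ is incompressible (it is genus-minimizing) and that the $D_j$ are essential product disks; (3) $B$ carries no Reeb branched surface because a Reeb component would force a torus leaf bounding a solid torus, but $B$ fully carries the fiber $F$, which is a once-punctured surface of genus $g \geq 1$, incompatible with a Reeb piece — alternatively, any essential lamination carried by $B$ contains a leaf isotopic to $F$ and hence has no toroidal Reeb component.

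I expect the main obstacle to be condition (1), specifically checking $\partial$-incompressibility of $\partial_h N(B)$ and the absence of monogons in a way that handles the boundary torus $\partial X_K$ correctly, since $B$ meets $T$ in a train track and the "half" sink disk / boundary phenomena make the local picture near $T$ more delicate than in the closed case. The cleanest route is to phrase everything in the language of sutured manifold decomposition: a monogon or a $\partial$-compression of $\partial_h N(B)$ would yield a compressing disk for the complementary sutured manifold $\overline{X_K - N(B)}$, but after the full sequence of product disk decompositions this is a product sutured ball $(B^3, S^1 \times I)$, which admits no such disk. One must also note that the spinal isotopies only move the $\alpha_j^+$ along $F \times \{\tfrac{1}{2}\}$ and keep the disk interiors disjoint from $F \times \{\tfrac{1}{2}\}$, so the complementary region is genuinely the same manifold as before standardization. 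Finally I would remark that this argument is uniform — it uses nothing about the specific braid beyond "positive braid closure, hence fibered, hence its surface exterior decomposes along these product disks to a ball" — which is exactly why the proposition is stated at this level of generality.
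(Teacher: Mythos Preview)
Your overall framework---verify conditions (1)--(4) of Theorem~\ref{thm:taolisinkdisk} by exploiting the product sutured manifold structure on the complement of the spine---matches the paper's approach. Your treatment of (1a), (1b), (1c) via the product sutured ball is close in spirit to the paper's, which phrases (1a) in terms of Gabai's tautness ($R(\gamma)$ is norm-minimizing, hence $\partial_h N(B)$ is incompressible and $\partial$-incompressible) and dispatches (1b) in one line from transverse orientability of $B$.

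There is, however, a genuine gap in your treatment of condition~(3). The assertion ``$B$ fully carries the fiber $F$, hence no Reeb piece'' is not a valid argument: carrying one surface does not preclude $B$ from carrying others, and the claim ``any essential lamination carried by $B$ contains a leaf isotopic to $F$'' is neither obvious nor what you need. The standard reduction (which the paper uses) is to show that $B$ carries no torus and fully carries no annulus. The torus case is easy---every branch sector of $B$ meets $\partial X_K$, so any surface carried by $B$ has nonempty boundary---but the annulus case requires actual work. The paper handles it by a local weight computation: near the first two letters $\sigma_1\sigma_1$ of $\beta$, assign weights $w_1,\ldots,w_5$ to the adjacent Seifert-disk, band, and product-disk sectors; the switch relations at the branch loci force $w_5=0$, contradicting full carrying. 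This computation is not bypassed by sutured-manifold generalities and is the one genuinely new ingredient in the proof of the proposition; your sketch omits it entirely.

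A smaller correction concerns condition~(2): $\partial M - \mathrm{int}(N(B))$ is not the annular suture $K \times I$. Here $M = X_K$, so $\partial M$ is the torus $\partial X_K$, and $\partial M - \mathrm{int}(N(B))$ is the complement of a neighborhood of the train track $\tau$ in that torus---a union of bigons. Incompressibility is then immediate (any simple closed curve already bounds a disk in $\partial M - \mathrm{int}(N(B))$), but for a different reason than the one you gave.
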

\vspace{-.5cm}
\begin{proof} \phantom{\qedhere}
We verify $B$ is laminar by verifying conditions (1) -- (4) of Theorem \ref{thm:taolisinkdisk} hold. Note that the $M$ of Theorem \ref{thm:taolisinkdisk} is $X_K$.

\begin{enumerate}
\item[(1a)] \textbf{$\partial_h(N(B))$ is incompressible and $\partial$-incompressible in $M - \text{int}(N(B))$.}

A sutured manifold $(M, \gamma)$ is \textbf{taut} if $M$ is irreducible and $R(\gamma)$ is norm minimizing in $H_2(M, \gamma)$ \cite{Gabai:FoliationsI}. Each of our product disks appears in a sutured manifold decomposition of $(X_F, K \times I)$ which terminates in $(D^2, S^1 \times I)$. Thus, any sutured manifold appearing in the sequence of product disk decompositions of $(X_F, K \times I)$ is a taut sutured manifold \cite{Gabai:FoliationsI}. In particular, the exterior of the pre-split spine (built from $c_1+c_2-2$ co-oriented product disks), $(M', \gamma_M')$, is a taut product sutured manifold, and $R(\gamma_M')$ is norm minimizing.  

The exterior of the post-split spine also has a product sutured manifold structure; denote this manifold $(N', \gamma_N')$. For $B$ the branched surface whose spine has standardized disks, we have $\gamma_N' \approx \partial_v(N(B)) \cup (\partial X_K -  int (N(B))|_{\partial X_K})$ and $R(\gamma_N')$ is isotopic to $R(\gamma_M') \approx \partial_h(N(B))$. Thus $\partial_h N(B)$ is norm minimizing in $H_2(N',\gamma_N')$, and $\partial_h(N(B))$ is incompressible and $\partial$-incompressible in $M-int(N(B))$. \\

\item[(1b)] \textbf{There is no monogon in $M - \text{int}(N(B))$.} \\
This follows from our construction; the branched surface has a transverse orientation.\\

\item[(1c)] \textbf{No component of $\partial_h N(B)$ is a sphere or a disk properly embedded in $M$.} \\
Every component of $\partial_h N(B) $ meets $\partial X_K$, so no component of $\partial_h(N(B))$ can be a sphere. The horizontal boundary $\partial_h N(B)$ is properly embedded in $X_B$, not $X_K$. \\

\item[(2)] \textbf{$M - \text{int}(N(B))$ is irreducible and $\partial M - \text{int}(N(B))$ is incompressible in $M - \text{int}(N(B))$.} \\
$M - int(N(B))$ is a submanifold of $S^3$ with connected boundary, thus is irreducible. $\partial X_K - int(N(B))$ is a torus with a neighborhood of a train track removed: it is a collection of bigons. In particular, any simple closed curve in $\partial X_K - int(N(B))$ bounds a disk in $\partial X_K - int(N(B))$, and is incompressible in $M - int(N(B))$. \\

\item[(3)] \textbf{$B$ contains no Reeb branched surface (see \cite{GabaiOertel} for more details).} \\
To prove $B$ does not contain a Reeb branched surface, it suffices to show that $B$ cannot carry a torus or fully carry an annulus. 

By construction, every sector of $B$ meets $\partial X_K$. Thus, any compact surface carried by $B$ must also meet $\partial X_K$. Thus, $B$ cannot carry a torus.

We now prove $B$ cannot fully carry an annulus. Suppose, by way of contradiction, that $B$ fully carries a compact surface $S$. Any such $S$ is built as a union of branch sectors, where each branch sector has a positive weight. Since $\beta$ is in the form specified by Section \ref{FiberSurface3Braids}, we can restrict our attention to the first two letters of the braid word, namely $\sigma_1 \sigma_1$; see Figure \ref{fig:compact_surfaces}. We assign weights to the relevant branch sectors:
\begin{itemize}
\item the disk sectors $D_1$ and $D_2$ have weights $w_1$ and $w_2$, respectively
\item the band sector $\mathbbm{b}_2$ has weight $w_3$,
\item the two (isotoped) product disks associated to $\sigma_1^2$ have weights $w_4$ and $w_5$.
\end{itemize}

See Figure \ref{fig:compact_surfaces}. If $B$ carries a compact surface, the switch relations induced by the branch loci induce the following: $w_1 = w_2 + w_4, \ w_3 = w_2 + w_4,$ and $w_1 = w_5 + w_3$.

This implies that $w_1 = w_3 = w_3 + w_5$, thus $w_5=0$. This contradicts that $S$ is fully carried by $B$. We conclude that $B$ cannot carry any compact surface, and therefore does not carry an annulus. \\

\item[(4)] \textbf{$B$ is sink disk free.} \\
This holds by assumption.\hfill $\Box$
\end{enumerate} 
\end{proof}

\begin{wrapfigure}{L}{.4\linewidth}
\labellist
\pinlabel {$w_1$} at 35 100
\pinlabel {$w_2$} at 88 100
\pinlabel {$w_3$} at 90 32
\pinlabel {$w_4$} at 60 165
\pinlabel {$w_4$} at 65 80
\pinlabel {$w_5$} at 68 10
\endlabellist
\begin{minipage}[c][.4\paperheight]{60mm}
\begin{center}
\includegraphics[scale=1.1]{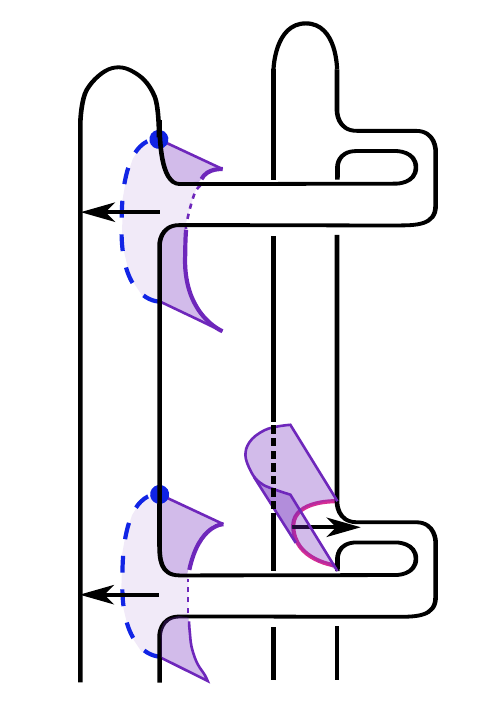}
\captionof{figure}{A local picture of the branched surface near the bands $\mathbbm{b}_1$ and $\mathbbm{b}_2$. For simplicity, the product disk associated to $\alpha_1$ appears broken in our figure; it has weight $w_4$. The standard relations near the branch locus indicate that $w_5=0$, thus $B$ cannot fully carry an annulus.}
\label{fig:compact_surfaces}
\end{center}
\end{minipage}
\end{wrapfigure}

\subsection{Construct taut foliations in $X_K$} \label{subsection:step5}
$B$ is a laminar branched surface. Theorem \ref{thm:taolisinkdisk} guarantees that for every rational slope $r$ carried by the boundary train track $\tau$, there exists an essential lamination $\mathcal{L}_r$ meeting $\partial X_K$ in simple closed curves of slope $r$. To construct taut foliations in $X_K$, we first understand which slopes are carried by $\tau$, apply Theorem \ref{thm:taolisinkdisk} to get a family of essential laminations, and then extend each lamination to a taut foliation in $X_K$.

\subsubsection{\textbf{\textup{Show the train track $\tau$ carries all rational slopes $r < 2g(K)-1$}}} \label{subsection:step5a} 
Since $B$ is formed by $(F \times \{\frac{1}{2}\}) \cup D_1 \cup \ldots \cup D_{c_1+c_2-2}$, the boundary train track $\tau$ carries slope $0$.

\begin{defn}
Each $D_j$ meets $\partial X_K$ in two arcs, each tracing out the path of an endpoint of $\alpha_j^-$ under $\varphi$. These arcs are \textbf{sectors of the train track $\tau$}; $\overline{\tau - \lambda}$ is a collection of sectors.
\end{defn}

We have $c_1+c_2-2$ disks, and therefore $2 \cdot (c_1+c_2-2)$ sectors in the associated train track $\tau$. Consider $\alpha_j^-$ with cusping $\widehat{\alpha}_j^-$. The cusping $\widehat{\alpha}_j^-$ will agree with the orientation of $\lambda$ at one endpoint of $\alpha_j^-$, and disagree at the other endpoint. Thus, for $s_j$ and $s_j'$ the pair of sectors induced by $\alpha_j^-$, the train tracks $\lambda \cup s_j$ and $\lambda \cup s_j'$ carry different slopes, as in Figure \ref{fig:train_track_setup}: $\lambda \cup \text{ (the leftmost sector)}$ carries $[0,1)$, while $\lambda \cup \text{ (the middle sector)}$ carries $(-\infty, 0]$.

\begin{defn}  \label{defn:endpoints}
If the direction of $\widehat{\alpha}_{j}^{-}$ disagrees with the orientation of $\lambda$ at a given endpoint of $\alpha_j^-$, we say this endpoint \textbf{contributes maximally to $\tau$}.
\end{defn}

In our figures, the endpoint of $\alpha_j^{-}$ contributing maximally is \textbf{bolded}.

\begin{figure}[h!]\center
\labellist
\pinlabel {$\lambda$} at 435 40
\pinlabel {$\partial(X_K)$} at 480 10
\endlabellist
\includegraphics[scale=.6]{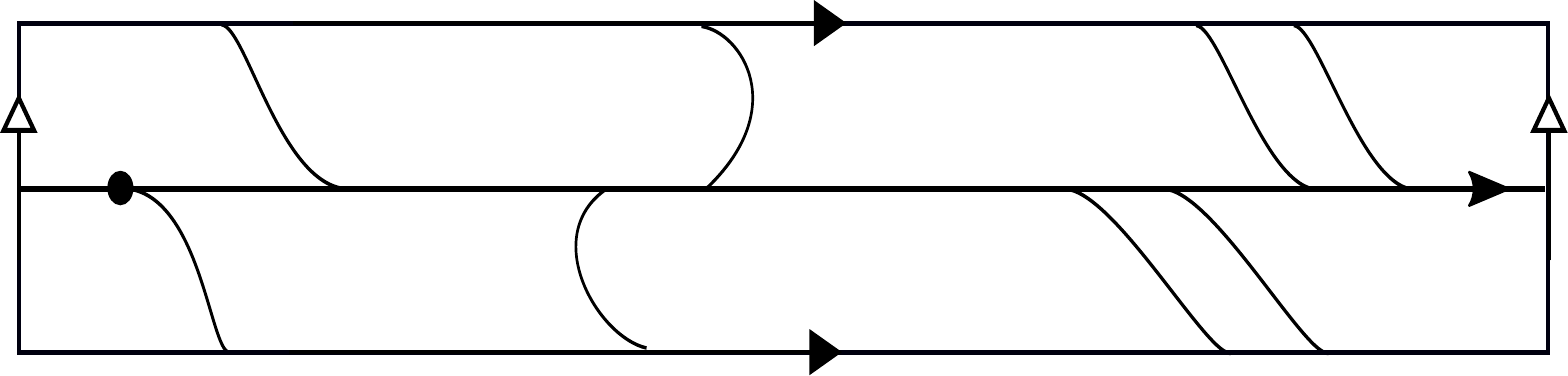}
\caption{A train track $\tau \subset \partial(X_K)$. $\lambda \cup \text{ (the leftmost sector)}$ carries $[0,1)$, while $\lambda \cup \text{ (the middle sector)}$ carries $(-\infty, 0]$. The rightmost sectors are linked.} 
\label{fig:train_track_setup}
\end{figure}

Our goal is to maximize the interval of slopes carried by $\tau$. There are $c_1 + c_2 - 2$ endpoints contributing maximally to $\tau$ -- one for each product disk. It is tempting to claim $\tau$ carries all slopes $[0, c_1 + c_2 - 2)$. However, this is na\"ive: the endpoints of the arcs $\alpha, \alpha'$ could be linked on along $\partial F$, as in the rightmost picture in Figure \ref{fig:train_track_setup}.

\begin{defn} \label{defn:linked}
Let $\alpha_{j}^{-}$ and $\alpha_{\ell}^{-}$ be distinct properly embedded arcs on $F$ such that (1) the first endpoint of each arc contributes maximally to $\tau$ and (2) their endpoints are linked in $\lambda$. Then $\alpha_{j}^{-}$ and $\alpha_{\ell}^{-}$ are called \textbf{linked arcs}. See Figure \ref{fig:train_track_setup}. If $\alpha_j^-$ and $\alpha_\ell^-$ are not linked, they are \textbf{unlinked} or \textbf{not linked}. 
\end{defn}

The train track $\tau$ induced by $B$ will carry all slopes in $(-\infty, k)$, where $k$ is the maximum number of pairwise unlinked arcs contributing maximally to $\tau$. Proving Theorem \ref{thm:main} requires sorting positive 3-braids into three types. For each type, we construct a laminar branched surface $B$ using $c_1+c_2-2$ product disks and a unique pair of linked arcs. Thus, $\tau$ carries all slopes in $[0, (c_1 + c_2 - 2)-1) = [0, 2g(K)-1)$.

\begin{defn} \label{defn: subtraintrack}
A \textbf{sub-train-track} $\tau'$ of $\tau$ is a train track carrying slope $0$, such that $ \{ \text{sectors of } \tau' \} \subseteq  \{ \text{sectors of } \tau \}$. 
\end{defn}

\begin{rmk}
For our purposes, $\tau'$ will include all sectors contributing maximally to $\tau$, and a single sector $s$ with $\lambda \cup s$ carrying $(-\infty, 0]$. 
\end{rmk}

\begin{lemma} \label{lemma:subtraintrackcarries}
Any slope carried by $\tau'$, a sub-train-track of $\tau$, is also carried by $\tau$. \hfill $\Box$
\end{lemma}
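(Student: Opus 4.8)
The plan is to argue directly from the definition of "a train track carrying a slope" via positive weightings on sectors. Recall a slope $r$ is carried by a train track $t$ if there is a properly embedded multicurve of slope $r$ realized as a weighted sum of the sectors of $t$ (equivalently, an assignment of non-negative real weights to the sectors satisfying the switch conditions, whose total realizes the homology class of slope $r$). Since $\tau'$ is a sub-train-track of $\tau$ in the sense of Definition \ref{defn: subtraintrack}, every sector of $\tau'$ is already a sector of $\tau$, and $\tau'$ carries slope $0$, i.e.\ contains $\lambda$ among (or built from) its sectors just as $\tau$ does.

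First I would fix a slope $r$ carried by $\tau'$, together with a witnessing non-negative weight vector $(w_s)_{s \in \mathrm{sect}(\tau')}$ satisfying the switch equations of $\tau'$. Next I would extend this to a weight vector on all sectors of $\tau$ by declaring $w_s = 0$ for every sector $s$ of $\tau$ not lying in $\tau'$. The key step is then to check that this extended vector still satisfies the switch equations of $\tau$: at any branch point of $\tau$, the incident sectors either all lie in $\tau'$ — in which case the switch equation there is exactly one of the switch equations of $\tau'$, already satisfied — or some incident sector lies outside $\tau'$, but then (because $\tau'$ is obtained from $\tau$ by deleting sectors, not by merging or subdividing) the deleted sectors carry weight $0$ and the equation reduces again to the corresponding relation among the surviving sectors, which is a consequence of the switch equations of $\tau'$ (or is vacuous). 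Hence the extended weighting is a legitimate weighting on $\tau$ realizing the same slope $r$, so $r$ is carried by $\tau$.

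The main obstacle — really the only point requiring care — is confirming that deleting sectors to pass from $\tau$ to $\tau'$ does not create a switch condition on $\tau$ that fails for the zero-extended weighting; this is where one uses that $\tau'$ is genuinely a sub-train-track (same ambient curves, $\lambda$ retained, sectors a subset) rather than an arbitrary train track sharing some sectors, and that all weights in question are non-negative so no cancellation is needed. Once that compatibility of switch relations is in hand, the conclusion is immediate, and the lemma follows. $\Box$
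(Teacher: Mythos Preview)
Your argument is correct and is exactly the natural justification one would supply here. Note, however, that the paper gives no proof of this lemma at all: the $\Box$ appears immediately after the statement, signalling that the authors regard it as immediate from Definition \ref{defn: subtraintrack}. So there is nothing to compare against --- you have simply (and correctly) spelled out the one-line observation that a non-negative weight system on $\tau'$ extends by zero to a non-negative weight system on $\tau$ satisfying the same switch relations and realizing the same slope.
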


For $P(-2,3,7)$, we have $c_1+c_2-2=10$ sectors contributing maximally to $\tau$, and exactly one pair of linked arcs coming from $\alpha_{8}^{-}$ and $\alpha_{9}^{-}$. Let $\tau'$ be the sub-train-track built from the endpoints of $\alpha_1^{-}, \ldots, \alpha_{10}^-$ that contribute maximally to $\tau$. Thus $\tau'$ carries all rational slopes in $[0,9)$. Appending the upper endpoint of $\alpha_8^{-}$ to $\tau'$ ensures $\tau'$ carries all slopes in $(-\infty, 9)$.  Applying Lemma \ref{lemma:subtraintrackcarries}, we conclude $\tau$, the train track induced by $B$, carries slopes in $(-\infty, 9)$.

\subsubsection{\textbf{\textup{Extend essential laminations to taut foliations}}} \label{subsection:step5b}

We now have a laminar branched surface $B$ carrying all rational slopes in $(-\infty, 2g(K)-1)$. By Theorem \ref{thm:taolisinkdisk}, $B$ carries an essential lamination $\mathcal{L}_r$ for every rational $r \in (-\infty, 2g(K)-1)$. We use these laminations to construct taut foliations in $X_K$.

\begin{prop} \label{prop:extendlamination}
Let $\mathcal{L}_r$ be an essential lamination carried by our laminar branched surface $B$, such that $\mathcal{L}_r$ meets $\partial X_K$ in simple closed curves of slope $r$. Then $\mathcal{L}_r$ can be extended to a taut foliation in $X_K$, which foliates $\partial X_K$ in parallel simple closed curves of slope $r$.
\end{prop}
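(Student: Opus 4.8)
The plan is to complete the lamination $\mathcal{L}_r$ to a foliation of all of $X_K$ in two stages, then verify tautness. First I would observe that $\mathcal{L}_r$ is an essential lamination carried by $B$ whose complementary regions $X_K \setminus \mathcal{L}_r$ are well-understood: since $B$ is built from one copy of the fiber surface $F \times \{\tfrac12\}$ together with a collection of product disks, each complementary region is an $I$-bundle over a (possibly non-compact) surface, with the horizontal boundary lying in $\mathcal{L}_r$ and the vertical boundary partly on $\partial X_K$. The key structural input is that $F \times \{\tfrac12\}$ itself is a leaf (or a union of leaves) of a foliation one can read off the branched surface: the interstitial regions between $\mathcal{L}_r$ and the copies of $F$ are products, so one fills them by the product foliation $F \times \{pt\}$, matching the boundary slope $r$ along $\partial X_K$ by construction of the train track $\tau$ in Section \ref{subsection:step5a}. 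This is the standard move of filling the complementary $I$-bundles of an essential lamination carried by a branched surface with no sink disk: each gut region is a product $\Sigma \times I$ and receives the foliation $\Sigma \times \{t\}$, which glues continuously to $\mathcal{L}_r$ because the branched surface is transversely oriented (condition (1b), no monogons) and sink disk free.

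Next I would address the collar of $\partial X_K$. The foliation produced so far meets $\partial X_K$ in a train track neighborhood of curves of slope $r$; one must spin or blow down to make it meet $\partial X_K$ in a genuine foliation by parallel simple closed curves of slope $r$. Here I would invoke the convention recorded in Section \ref{conventions} that the monodromy $\varphi$ is the identity near $\partial F$, so a collar of $\partial X_K$ already looks like $(\text{annulus}) \times S^1$ foliated compatibly; the curves of slope $r$ carried by $\tau$ bound the requisite product annuli, and a standard spinning construction (as in Roberts \cite{Roberts:Part1, Roberts:Part2} or Gabai) converts the train-track-pattern boundary into parallel leaves of slope $r$ without introducing Reeb components. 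The point is that the boundary slope $r$ is realized by $\tau$, so there is no obstruction to extending across the collar.

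Finally, tautness. I would argue that the resulting foliation $\mathcal{F}_r$ has no Reeb components and admits a closed transversal through every leaf: indeed, the core of the surgery solid torus — equivalently a curve intersecting $\partial X_K$ transversally and efficiently — is a closed transversal, as noted in the Remark following Theorem \ref{thm:main}, and every leaf can be connected to $\partial X_K$ since every branch sector of $B$ meets $\partial X_K$ (this was exactly condition (3) in the proof of Proposition \ref{prop:laminarbranchedsurface}). A foliation of a compact orientable $3$-manifold with boundary that admits a transversal meeting every leaf, together with incompressibility of the leaves inherited from essentiality of $\mathcal{L}_r$ (condition (1a)), is taut. I expect the main obstacle to be the second stage: carefully arranging the spinning near $\partial X_K$ so that the product foliations of the gut regions, the lamination $\mathcal{L}_r$, and the boundary collar foliation all glue into a $C^0$ (indeed $C^\infty$ after smoothing) foliation with the boundary meeting $\partial X_K$ in exactly the parallel slope-$r$ curves — the combinatorics of which sectors of $\tau$ are used, and the linking data from Section \ref{subsection:step5a}, enter precisely here.
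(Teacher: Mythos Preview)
Your first stage is exactly the paper's argument: the lamination exterior $\overline{X_K - \mathcal{L}_r}$ decomposes as $X_B \cup \overline{N(B)-\mathcal{L}_r}$, each piece inherits an $I$-bundle structure (the first because $X_B$ is a taut product sutured manifold, the second because $N(B)$ is an $I$-bundle over $B$), and filling by the horizontal foliation $\Sigma \times \{t\}$ extends $\mathcal{L}_r$ to a foliation $\mathcal{F}_r$ of all of $X_K$.

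Your second stage, however, is unnecessary and rests on a misreading of the hypothesis. You write that the foliation so produced ``meets $\partial X_K$ in a train track neighborhood of curves of slope $r$'' and then propose spinning to fix this. But the hypothesis already says $\mathcal{L}_r \cap \partial X_K$ consists of parallel simple closed curves of slope $r$; the train track $\tau$ lives on $B$, not on $\mathcal{L}_r$. The complement $\partial X_K \setminus \mathcal{L}_r$ is therefore a single slope-$r$ annulus $A_r$, and the paper simply observes that the $I$-bundle structures on $X_B|_{\partial X_K}$ and $\overline{N(B)-\mathcal{L}_r}|_{\partial X_K}$ together foliate $A_r$ by slope-$r$ circles. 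No spinning, no blow-down, no combinatorics of sectors is needed. Relatedly, your remark that $F \times \{\tfrac12\}$ is a leaf is off: for $r \neq 0$ the fiber surface has the wrong boundary slope to be a leaf of $\mathcal{F}_r$.

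Your tautness discussion is more explicit than the paper's (which leaves it implicit in the product structure of the complementary regions and essentiality of $\mathcal{L}_r$), and your transversal argument is fine, though the reference to ``the core of the surgery solid torus'' is slightly premature since we are still in $X_K$.
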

\begin{proof}

In Proposition \ref{prop:laminarbranchedsurface}, we proved the branched surface exterior $X_B \approx \overline{X_K - int(N(B))}$ is isotopic to a product sutured manifold. In particular, $X_B$ has an $I$-bundle structure. $N(B)$ is an $I$-bundle over $B$, thus $\overline{N(B) - \mathcal{L}_r}$ has an $I$-bundle structure. Endowing the lamination exterior $X_{\mathcal{L}_r} \approx \overline{X_K - \mathcal{L}_r}$ with an $I$-bundle structure yields a foliation $\mathcal{F}_r$ for $X_K$ which is induced by $\mathcal{L}_r$.

$\mathcal{L}_r$ meets $\partial X_K$ in simple closed curves of slope $r$, so $\displaystyle \overline{X_{\mathcal{L}_r}}|_{\partial X_K}$ is an $r$-sloped annulus $A_r$. $A_r$ is formed from $X_B|_{\partial X_K}$ and $\overline{N(B)-L_r}|_{\partial X_K}$, which both have $I$-bundle structures. Simultaneously endowing $X_B$ and $\overline{N(B)-L_r}$ with an $I$-bundle structure (as above) foliates $A_r$ by circles of slope $r$; thus $\partial X_K$ is foliated by simple closed curves of slope $r$. 
\end{proof}

\subsubsection{\textbf{\textup{Produce taut foliations in $S^3_r(K)$ via Dehn filling}}} \label{subsection:step5c}

For all rational $r <2g(K)-1$, $X_K$ admits a taut foliation $\mathcal{F}_r$ foliating $\partial X_K$ in simple closed curves of slope $r$. Performing $r$-framed Dehn filling endows $S^3_r(K)$ with a taut foliation. 

To summarize for $P(-2,3,7)$: we constructed a laminar branched surface $B \subset X_K$. The induced train track $\tau$ carries all rational slopes in $(-\infty, 2g(K)-1) = (-\infty, 9)$. Applying Proposition \ref{prop:laminarbranchedsurface}, Theorem \ref{thm:taolisinkdisk} and Proposition \ref{prop:extendlamination}, we deduce $X_K$ admits taut foliations meeting the boundary torus $T$ in simple closed curves of slope $r \in (-\infty, 2g(K)-1)$. Performing $r$-framed Dehn filling yields $S_r^3(K)$ endowed with a taut foliation. These manifolds are non-L-spaces; we have produced the taut foliations predicted by Conjecture \ref{conj:LSpace}. 

\section{Proof of Theorem \ref{thm:main}} \label{section:3braids}

\noindent In this section, we prove our main theorem: 

\noindent \textbf{Theorem \ref{thm:main}. }\textit{Let $K$ be a knot in $S^3$, realized as the closure of a positive 3-braid. Then for every rational $r < 2g(K)-1$, the knot exterior $X_K := S^3 - \accentset{\circ}{\nu}(K)$ admits taut foliations meeting the boundary torus $T$ in parallel simple closed curves of slope $r$. Hence the manifold obtained by $r$-framed Dehn filling, $S^3_r(K)$, admits a taut foliation.} \\

The proof requires generalizing the $P(-2,3,7)$ example of Section \ref{section:example}. In Section \ref{section:lemmas}, we prove a few lemmas. Three families of branched surfaces are constructed in Section \ref{section:buildingbranchedsurfaces}. 

\subsection{Co-orienting Arcs} \label{section:lemmas}

Given an arbitrary positive 3-braid word $\beta$, we choose $c_1+c_2-2$ product disks, as in Section \ref{subsection:step1}. We need a strategy for assigning co-orientations. As in Section \ref{subsection:step4a}, we will provide cusp directions in tandem with $\beta$, and analyze which cusping directions produce sink disks and linked arc pairs. We aim to maximize the slopes carried by $\tau$ while ensuring $B$ is sink disk free.

\begin{lemma} \label{lemma:sigmasinkdisk}
Suppose the subword $\sigma_i \sigma_i$ arises as the $j^{\text{th}}$ and $j+1^{\text{st}}$ letters in $\beta$. The cusping directions $(\leftarrow)^2, (\rightarrow)^2$, and $(\rightarrow \ \leftarrow)$ prevent the band sector $\mathbbm{b}_{j+1}$ from being a half sink disk.
\end{lemma}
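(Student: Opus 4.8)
\textbf{Proof strategy for Lemma \ref{lemma:sigmasinkdisk}.}

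The plan is to exploit the rigidity established in Lemma \ref{lemma:cuspdirections}: once a cusp direction is assigned to $\alpha_j^-$ (recorded by $\leftarrow$, $\rightarrow$, or $(\ \ )$ below the $j^{\text{th}}$ letter of $\beta$), the cusp direction of $\alpha_j^+$ is forced to point the opposite way when viewed on $(F \times \{\tfrac12\})^+$. So the whole lemma reduces to a bookkeeping exercise: draw the local picture of the branched surface $B$ in a neighborhood of the consecutive bands $\mathbbm{b}_j$ and $\mathbbm{b}_{j+1}$ (both attached between $S_i$ and $S_{i+1}$, since the subword is $\sigma_i\sigma_i$), identify exactly which smooth arcs of the branch locus lie in the boundary of the band sector $\mathbbm{b}_{j+1}$, and then check that for each of the three listed cusping choices at least one of those arcs has its cusp direction pointing \emph{out} of $\mathbbm{b}_{j+1}$. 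By the observation following Definition \ref{defn:sinkdiskdefn} (``to prove a branched surface is sink disk free, we need only check that some cusped arc points out of each branch sector''), finding one outward-pointing cusp arc suffices.

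First I would pin down the boundary of the band sector $\mathbbm{b}_{j+1}$. After standardizing via Lemma \ref{lemma:isotopy}, the arc $\alpha_j^-$ is isotopic to the co-core of $\mathbbm{b}_j$ and $\alpha_{j+1}^-$ to the co-core of $\mathbbm{b}_{j+1}$; the arc $\alpha_j^+$ (standardized into $S_{i+1}$) together with $\alpha_{j+1}^-$ and the relevant pieces of $\partial F$ cuts out the band sector containing $\mathbbm{b}_{j+1}$. So the cusp arcs appearing in $\partial \mathbbm{b}_{j+1}$ are precisely $\widehat{\alpha}_{j+1}^-$ (at the bottom/co-core) and $\widehat{\alpha}_j^+$ (along the side where $\alpha_j^+$ runs across $S_{i+1}$, since $\mathbbm{b}_j$ and $\mathbbm{b}_{j+1}$ are consecutive bands of the same type, so $\alpha_j^+$ is the co-core of $\mathbbm{b}_{j+1}$ before standardization). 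Now run the three cases. If the cusping is $(\leftarrow)^2$, then the second $\leftarrow$ says $\langle \widehat{\alpha}_{j+1}^-, \alpha_{j+1}^-\rangle = 1$, i.e. $\widehat{\alpha}_{j+1}^-$ is cusped ``to the left''; consulting the local picture (as in Figure \ref{fig:global_cusping_product_disk}), this arrow points out of $\mathbbm{b}_{j+1}$. If the cusping is $(\rightarrow)^2$, then the first $\rightarrow$ forces $\widehat{\alpha}_j^+$ to be cusped ``to the left'' on $(F\times\{\tfrac12\})^+$, which again points out of the band sector $\mathbbm{b}_{j+1}$. If the cusping is $(\rightarrow\ \leftarrow)$, then \emph{both} of these favourable things happen simultaneously, so $\mathbbm{b}_{j+1}$ certainly is not a half sink disk. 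The one case the lemma pointedly omits, $(\leftarrow\ \rightarrow)$, is exactly the case where $\widehat{\alpha}_j^+$ points \emph{into} $\mathbbm{b}_{j+1}$ (from the $\leftarrow$ on the $j^{\text{th}}$ letter) and $\widehat{\alpha}_{j+1}^-$ also points \emph{into} $\mathbbm{b}_{j+1}$ (from the $\rightarrow$ on the $(j{+}1)^{\text{st}}$ letter), so no conclusion can be drawn there — which is why later case analysis must avoid it.

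The main obstacle is not logical depth but getting the local orientation picture exactly right: one has to be careful that ``cusped to the left'' on the abstract page translates correctly to ``points out of the band sector'' in the concrete configuration of two stacked same-type bands, and that the standardization isotopy $\iota_t$ (which, per the proof of Lemma \ref{lemma:cuspdirections}, preserves the relative positions of the endpoints of $\alpha_t^+$) has not flipped any of the relevant arrows. I would therefore anchor the argument in a single carefully labelled local figure of $\mathbbm{b}_j \cup \mathbbm{b}_{j+1}$ with the branch locus and the induced cusp directions drawn in, and simply read off the three cases from it; everything else is routine.
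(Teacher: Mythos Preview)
Your proposal is correct and follows essentially the same approach as the paper: identify that the cusp arcs in $\partial\mathbbm{b}_{j+1}$ are $\widehat{\alpha}_j^+$ and $\widehat{\alpha}_{j+1}^-$ (since $\alpha_j^+$ is isotopic to the co-core of $\mathbbm{b}_{j+1}$), then invoke Lemma \ref{lemma:cuspdirections} to read off that in each of the three cases at least one of these points out of the sector. The paper's proof is slightly terser, grouping $(\rightarrow)^2$ and $(\rightarrow\ \leftarrow)$ together since both have $\widehat{\alpha}_j^- = (\rightarrow)$ and hence $\widehat{\alpha}_j^+ = (\leftarrow)$, but the content is identical.
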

\begin{proof}
As in Figures \ref{fig:two_arrows_same_direction} and \ref{fig:two_bands_linked_arcs}, $\alpha_{j}^{+}$ is isotopic to the co-core of $\mathbbm{b}_{j+1}$. If $\widehat{\alpha}_{j}^{-} = (\rightarrow)$, then by Lemma \ref{lemma:cuspdirections}, $\widehat{\alpha}_{j}^{+} = (\leftarrow)$, hence the directions $(\rightarrow)^2$ and $(\rightarrow \ \leftarrow)$ do not make $\mathbbm{b}_{j+1}$ a half sink disk. The cusping directions $(\leftarrow)^2$ have $\widehat{\alpha}_{j+1}^-$ pointing out of $\mathbbm{b}_{j+1}$. 
\end{proof}

\begin{figure}[h!]\center
\labellist
\pinlabel {$\mathbbm{b}_j$} at 250 240
\pinlabel {$\mathbbm{b}_j$} at 650 240
\pinlabel {$\mathbbm{b}_{j+1}$} at 260 80
\pinlabel {$\mathbbm{b}_{j+1}$} at 660 80
\endlabellist
\includegraphics[scale=.45]{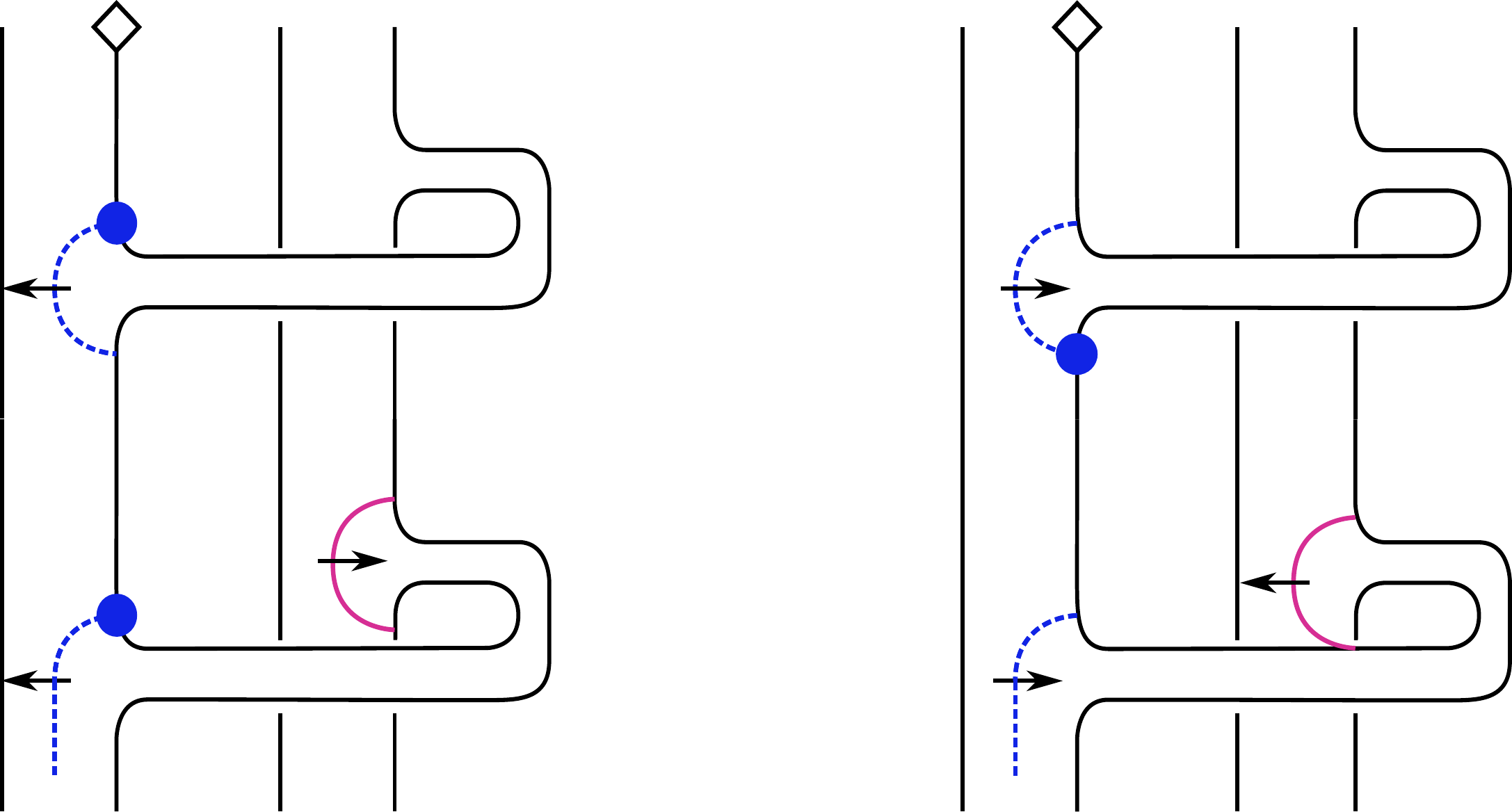}
\caption{The directions $(\rightarrow)^2$ and $(\leftarrow)^2$ do not make $\mathbbm{b}_{j+1}$ a half sink disk.} 
\label{fig:two_arrows_same_direction}
\end{figure}

\begin{figure}[h!]\center
\labellist
\pinlabel {$\mathbbm{b}_{j}$} at 600 1000
\pinlabel {$\mathbbm{b}_{j+1}$} at 640 580
\pinlabel {$\mathbbm{b}_{j+2}$} at 640 200
\endlabellist
\includegraphics[scale=.2]{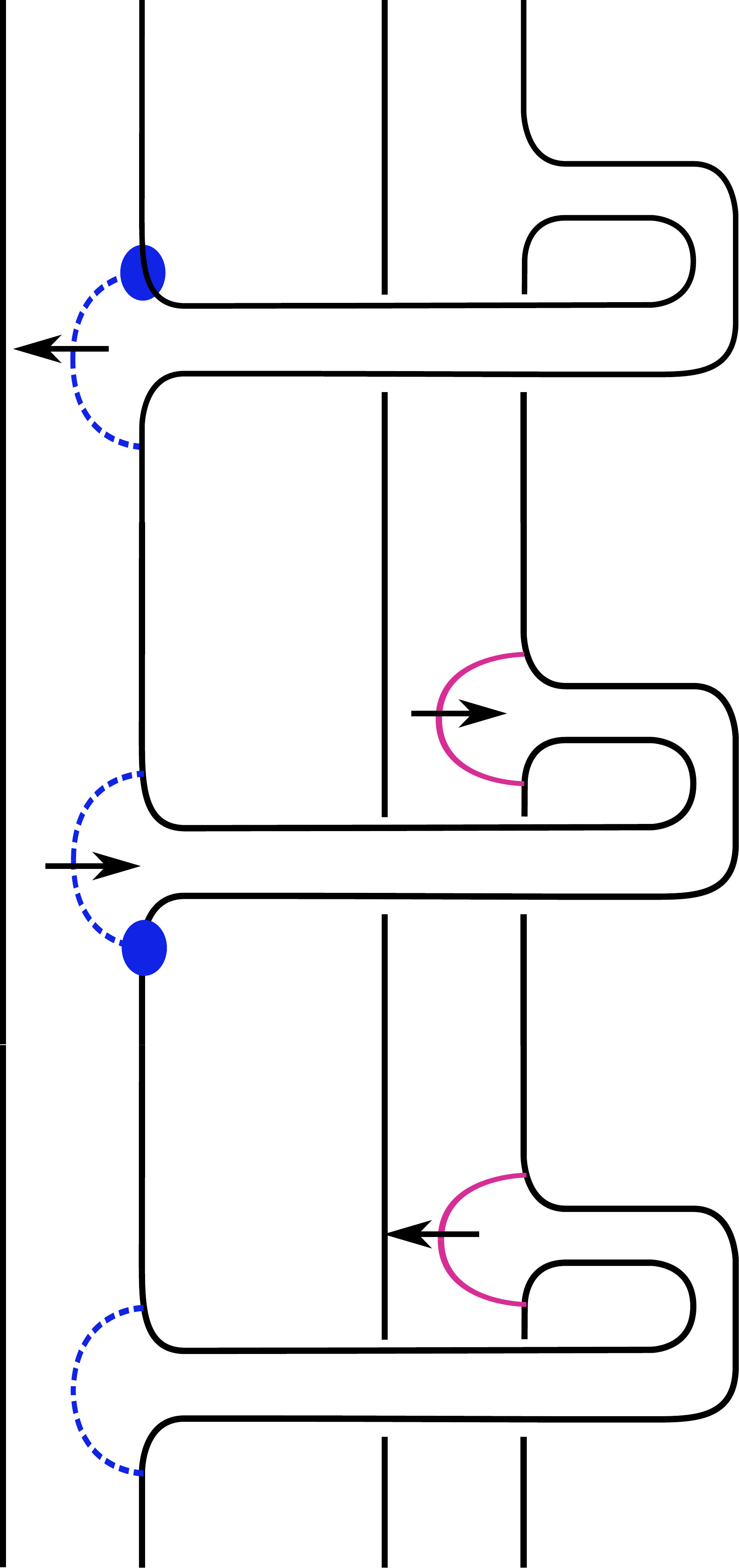}
\caption{The band $\mathbbm{b}_{j+1}$ is a half sink disk.} 
\label{fig:sink_disk_in_band}
\end{figure}

\begin{lemma} \label{lemma:sinkdiskbandsector}
Suppose $\beta$ contains the subword $\sigma_i \sigma_i \sigma_i$, arising as the $j, j+1, j+2$ letters of $\beta$. The cusping directions $(\leftarrow \ \ \rightarrow \ \ \star \ )$, $\star \in \{\rightarrow, \ \ \leftarrow, \ \ \ \ \}$ force $\mathbbm{b}_{j+1}$ to be a half sink disk.
\end{lemma}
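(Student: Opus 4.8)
The plan is to analyze the local picture near the three consecutive bands $\mathbbm{b}_j, \mathbbm{b}_{j+1}, \mathbbm{b}_{j+2}$ coming from the subword $\sigma_i\sigma_i\sigma_i$, exactly as in Figure~\ref{fig:sink_disk_in_band}, and to identify the boundary of the band sector $\mathbbm{b}_{j+1}$ together with the cusp directions on each arc in that boundary. After standardizing (Lemma~\ref{lemma:isotopy}), the arc $\alpha_j^+$ is isotopic to the co-core of $\mathbbm{b}_{j+1}$, and the arc $\alpha_{j+1}^-$ is (isotopic to) the co-core of $\mathbbm{b}_{j+1}$ as well; these two co-cores, together with subarcs of $\partial F$ and of the Seifert disks $S_i, S_{i+1}$, bound the disk branch sector $\mathbbm{b}_{j+1}$. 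Since $\mathbbm{b}_{j+1}$ is a band sector with $\partial \mathbbm{b}_{j+1}\cap\partial X_K\neq\varnothing$, to show it is a half sink disk I must check that the cusp direction of \emph{every} arc in $\partial\mathbbm{b}_{j+1}-\partial X_K$ points into $\mathbbm{b}_{j+1}$; the relevant arcs are precisely $\widehat{\alpha}_j^+$ and $\widehat{\alpha}_{j+1}^-$.

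First I would translate the three symbols of the cusping word into cusp orientations via Definition~\ref{defn:cuspbraidword} and Lemma~\ref{lemma:cuspdirections}. The first symbol is $(\leftarrow)$ at position $j$, so $\langle\widehat{\alpha}_j^-,\alpha_j^-\rangle=1$ and hence $\langle\widehat{\alpha}_j^+,\alpha_j^+\rangle=-1$: reading off the local model in Figure~\ref{fig:cusping_a_product_disk} / Figure~\ref{fig:global_cusping_product_disk}, this says $\widehat{\alpha}_j^+$ points into the region lying below $\mathbbm{b}_j$, i.e.\ into $\mathbbm{b}_{j+1}$. The second symbol is $(\rightarrow)$ at position $j+1$, so $\langle\widehat{\alpha}_{j+1}^-,\alpha_{j+1}^-\rangle=-1$, meaning $\widehat{\alpha}_{j+1}^-$ is cusped ``to the right'' when viewed on $(F\times\{\tfrac12\})^+$; in the local picture near $\mathbbm{b}_{j+1}$ this likewise points into $\mathbbm{b}_{j+1}$ rather than out toward $\mathbbm{b}_{j+2}$. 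The third symbol $\star$ governs the disk $D_{j+2}$; its arc $\alpha_{j+1}^+$ is the co-core of $\mathbbm{b}_{j+2}$ and its arc $\alpha_{j+2}^-$ is the co-core of $\mathbbm{b}_{j+2}$, so $D_{j+2}$ contributes no cusped arc to $\partial\mathbbm{b}_{j+1}-\partial X_K$ regardless of whether $\star$ is $\rightarrow$, $\leftarrow$, or $(\ \ )$. Hence the only two interior boundary arcs of $\mathbbm{b}_{j+1}$ are $\widehat{\alpha}_j^+$ and $\widehat{\alpha}_{j+1}^-$, both of which point inward.

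Finally I would assemble these observations: $\mathbbm{b}_{j+1}$ is homeomorphic to a disk, it meets $\partial X_K$ (in the arc where the band's co-core meets the boundary torus, and possibly in a further component, which is allowed), and the branch direction of each arc of $\partial\mathbbm{b}_{j+1}-\partial X_K$ points into $\mathbbm{b}_{j+1}$; by the definition of half sink disk, $\mathbbm{b}_{j+1}$ is a half sink disk, proving the lemma. The main obstacle is purely bookkeeping: being careful that the co-orientation conventions of Definition~\ref{defn:cuspbraidword} are correctly transported through the standardizing isotopies $\iota_t$ (which, by the last sentence of the proof of Lemma~\ref{lemma:cuspdirections}, preserve the relative positions of the endpoints of $\alpha_j^+$, so ``$\leftarrow$'' and ``$\rightarrow$'' retain their meaning), and that the third symbol genuinely cannot rescue the sector — i.e.\ verifying that no choice of $\star$, including the uncusped option, produces a cusped arc of $D_{j+2}$ in $\partial\mathbbm{b}_{j+1}$ that points outward. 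Once the local model of Figure~\ref{fig:sink_disk_in_band} is fixed, each of these checks is immediate from the figures.
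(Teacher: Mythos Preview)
Your argument is correct and follows exactly the paper's approach: identify that the branch-locus boundary of the band sector $\mathbbm{b}_{j+1}$ consists of $\alpha_j^+$ and $\alpha_{j+1}^-$, and use Lemma~\ref{lemma:cuspdirections} to check that under $(\leftarrow\ \rightarrow\ \star)$ both cusp directions point into the sector (with $\star$ irrelevant since $D_{j+2}$ contributes no arc to $\partial\mathbbm{b}_{j+1}$). One minor slip: in your third-symbol discussion you write ``its arc $\alpha_{j+1}^+$'' when referring to $D_{j+2}$, but $\alpha_{j+1}^+$ belongs to $D_{j+1}$; the arcs of $D_{j+2}$ are $\alpha_{j+2}^\pm$, and your conclusion that neither lies in $\partial\mathbbm{b}_{j+1}$ is unaffected.
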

\begin{proof}
As in Figure \ref{fig:sink_disk_in_band}, both $\alpha_{j+1}^{-}$  and $\alpha_{j}^{+}$ are isotopic to the co-core of $\mathbbm{b}_{j+1}$. Not only does $\widehat{\alpha}_{j+1}^{-}$ point into $\mathbbm{b}_{j+1}$, but by Lemma \ref{lemma:cuspdirections}, so does $\widehat{\alpha}_{j+1}^{-}$. 
\end{proof}

To produce a sink disk free branched surface, we should avoid the cusp directions $(\leftarrow \ \ \rightarrow)$. 

\begin{figure}[h!]\center
\labellist
\pinlabel {$\mathbbm{\alpha}_{j}^{-}$} at -30 400
\pinlabel {$\mathbbm{\alpha}_{j}^{+}$} at 350 250
\pinlabel {$\mathbbm{\alpha}_{j+1}^{-}$} at -50 100
\pinlabel {$\mathbbm{b}_{j}$} at 450 440
\pinlabel {$\mathbbm{b}_{j+1}$} at 475 150
\endlabellist
\includegraphics[scale=.28]{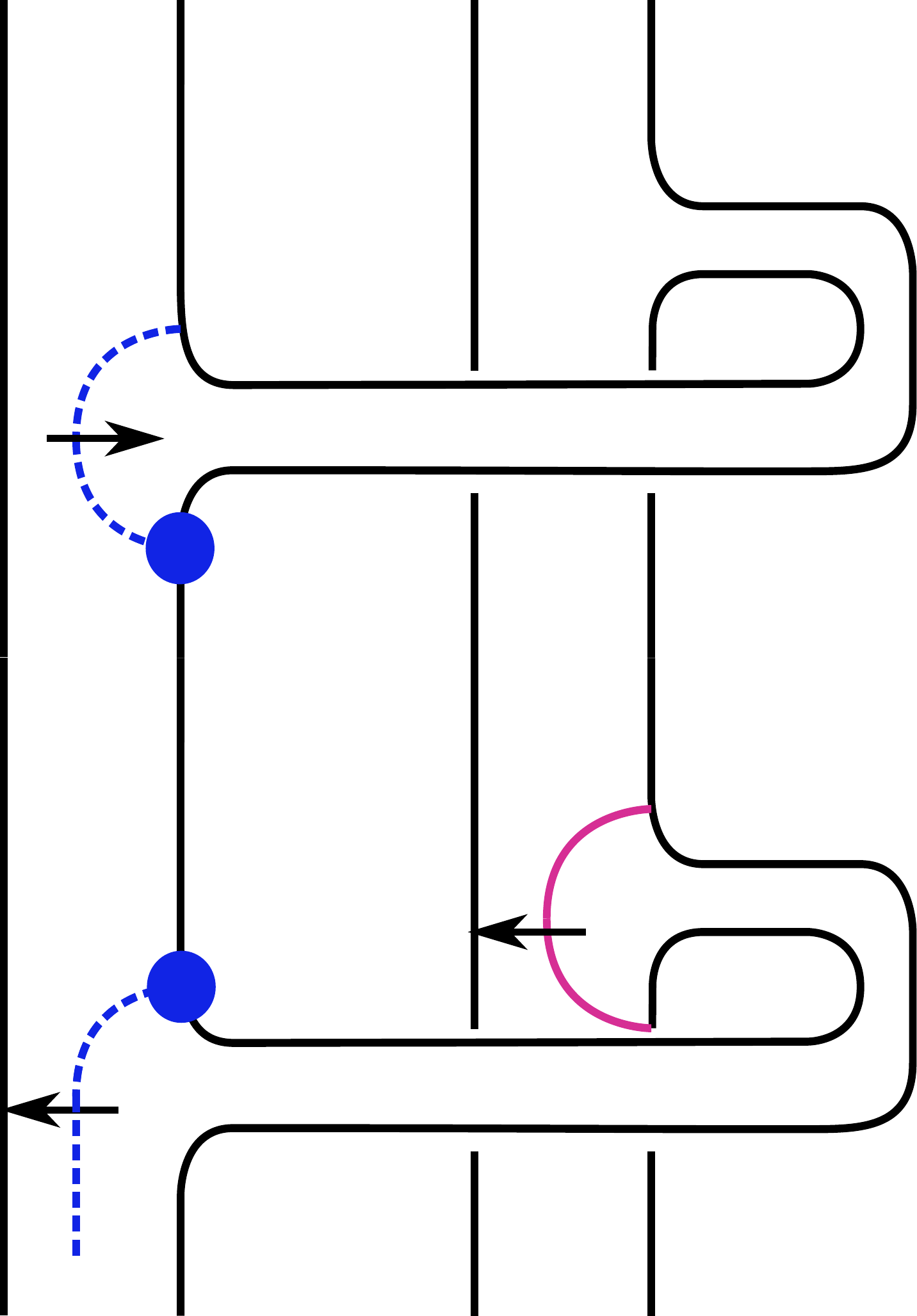}
\caption{The arcs $\alpha_j^{-}$ and $\alpha_{j+1}^{-}$ are linked.} 
\label{fig:two_bands_linked_arcs}
\end{figure}

\begin{figure}[h!]\center
\labellist
\pinlabel {$\mathbbm{\alpha}_{j}^{-}$} at -32 400
\pinlabel {$\mathbbm{\alpha}_{j+1}^{-}$} at 350 250
\pinlabel {$\mathbbm{\alpha}_{j}^{+}$} at 350 162
\pinlabel {$\mathbbm{b}_{j}$} at 450 440
\pinlabel {$\mathbbm{b}_{j+1}$} at 700 150
\endlabellist
\includegraphics[scale=.25]{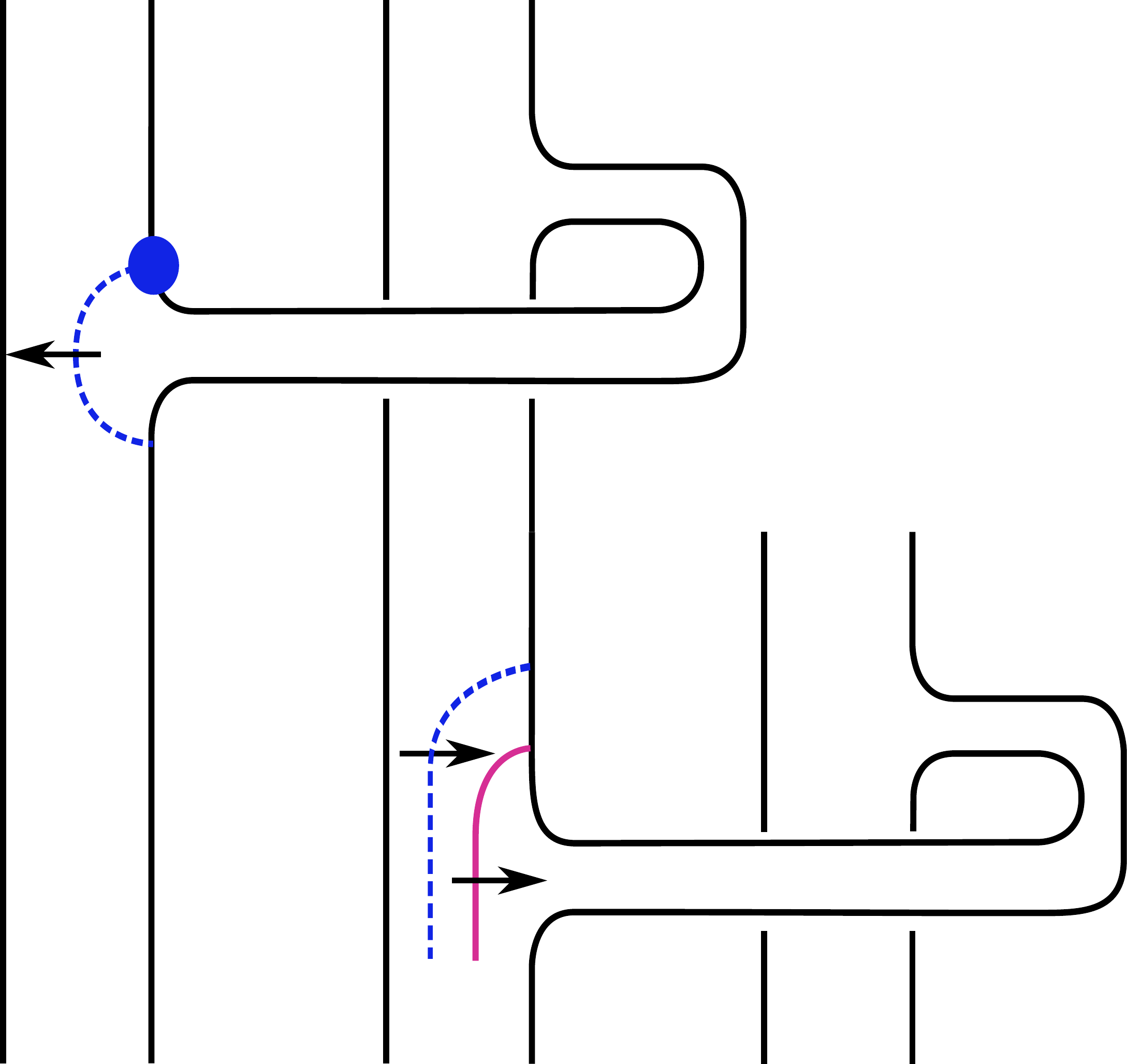}
\caption{The arcs $\alpha_j^-$ and $\alpha_{j+1}^-$ are not linked.} 
\label{fig:sigma1sigma2}
\end{figure}

\begin{lemma} \label{lemma:linkingarcs}
Suppose $\beta$ contains the subword $\sigma_i \sigma_i$ arising as the $j^{\text{th}}$ and $j+1^{\text{st}}$ letters in the braid word $\beta$. The associated cusping directions $(\leftarrow \ \ \leftarrow) \text{\ \ and \ \ } (\rightarrow \ \ \rightarrow)$ create an arc, unlinked from all other arcs, that contributes maximally to $\tau$. The cusping directions $(\rightarrow \ \ \leftarrow)$ create a pair of linked arcs.\end{lemma}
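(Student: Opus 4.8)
The plan is to analyze the two bands $\mathbbm{b}_j$ and $\mathbbm{b}_{j+1}$ locally, exactly as in Figures \ref{fig:two_bands_linked_arcs} and \ref{fig:sigma1sigma2}, and track the endpoints of $\alpha_j^-$ and $\alpha_{j+1}^-$ along $\partial F$. Recall that each $\alpha_j^-$ is isotopic to the co-core of $\mathbbm{b}_j$ (up to the path of its lower endpoint being obstructed), so its two endpoints on $\partial F$ sit just above and just below the attachment site of $\mathbbm{b}_j$; similarly for $\alpha_{j+1}^-$ relative to $\mathbbm{b}_{j+1}$. Since $\mathbbm{b}_j$ and $\mathbbm{b}_{j+1}$ are consecutive bands of the same type, their attachment sites on $\partial F$ are adjacent, so the four relevant endpoints interleave in a predictable way along $\lambda$. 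The key input is Lemma \ref{lemma:cuspdirections}: the cusp direction $\widehat{\alpha}_j^-$ determines $\widehat{\alpha}_j^+$, and $\alpha_j^+$ is isotopic to the co-core of $\mathbbm{b}_{j+1}$, which pins down how $\widehat{\alpha}_j^+$ interacts with the endpoint of $\alpha_{j+1}^-$ sharing that band.

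First I would handle the case $(\leftarrow\ \leftarrow)$ (the case $(\rightarrow\ \rightarrow)$ being symmetric). Here both $\widehat{\alpha}_j^-$ and $\widehat{\alpha}_{j+1}^-$ point the same way, so exactly one endpoint of each arc contributes maximally to $\tau$ (by Definition \ref{defn:endpoints}); I would identify which endpoint this is using the local picture of Figure \ref{fig:two_bands_linked_arcs}, and observe that the maximally-contributing endpoint of $\alpha_{j+1}^-$ lies on the far side of $\mathbbm{b}_{j+1}$'s attachment region, while that of $\alpha_j^-$ lies above $\mathbbm{b}_j$'s region; since these do not interleave with each other — nor, after standardization, with the endpoints of any other $\alpha_\ell^-$, because the standardizing isotopies of Lemma \ref{lemma:isotopy} keep the relative order of upper/lower endpoints — the arc produced is unlinked from all others. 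For $(\rightarrow\ \leftarrow)$, the cusp directions of $\alpha_j^-$ and $\alpha_{j+1}^-$ point toward each other, and reading off Figure \ref{fig:two_bands_linked_arcs} (where $\alpha_j^+$, the co-core of $\mathbbm{b}_{j+1}$, forces the lower endpoint of $\alpha_{j+1}^-$ to be the one that is pushed past), the two maximally-contributing endpoints interleave along $\lambda$: they form a linked pair in the sense of Definition \ref{defn:linked}. I would also note that the excluded direction $(\leftarrow\ \rightarrow)$ is exactly the one ruled out by Lemma \ref{lemma:sinkdiskbandsector} (when a third $\sigma_i$ follows), so these three allowed configurations are the relevant ones.

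The main obstacle I expect is bookkeeping the endpoint positions carefully enough to distinguish "linked" from "unlinked" — in particular, being precise about which of the two endpoints of $\alpha_{j+1}^-$ is obstructed by the band $\mathbbm{b}_j$ and hence relocated along $\partial F$, since that is what turns the $(\rightarrow\ \leftarrow)$ configuration into a genuine interleaving rather than a nested pair. This is essentially a planar figure-chase, so the honest version of the proof will lean on Figures \ref{fig:two_bands_linked_arcs} and \ref{fig:sigma1sigma2} to make the endpoint ordering unambiguous; the only subtlety requiring a sentence of justification is that standardization (Lemma \ref{lemma:isotopy}) and the co-core identifications from Section \ref{subsection:step4} do not disturb the cyclic order of the endpoints under consideration, so the linking/unlinking computed in the local model persists globally in $\tau$.

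\begin{proof}
Consider the two bands $\mathbbm{b}_j, \mathbbm{b}_{j+1}$ associated to the subword $\sigma_i \sigma_i$. After standardizing, $\alpha_j^-$ and $\alpha_{j+1}^-$ are isotopic to the co-cores of $\mathbbm{b}_j$ and $\mathbbm{b}_{j+1}$ (possibly with the lower endpoint of $\alpha_{j+1}^-$ rerouted along $\partial F$ past the attachment region of $\mathbbm{b}_j$), and $\alpha_j^+$ is isotopic to the co-core of $\mathbbm{b}_{j+1}$; see Figures \ref{fig:two_bands_linked_arcs} and \ref{fig:sigma1sigma2}.

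\textbf{The directions $(\leftarrow\ \leftarrow)$ and $(\rightarrow\ \rightarrow)$.} By symmetry we treat $(\leftarrow\ \leftarrow)$. Here $\widehat{\alpha}_j^-$ and $\widehat{\alpha}_{j+1}^-$ point the same way, so each of $\alpha_j^-, \alpha_{j+1}^-$ has exactly one endpoint contributing maximally to $\tau$ (Definition \ref{defn:endpoints}). Reading off Figure \ref{fig:sigma1sigma2}, the maximally-contributing endpoints of $\alpha_j^-$ and $\alpha_{j+1}^-$ do not interleave along $\lambda$: one lies just above the attachment region of $\mathbbm{b}_j$ and the other on the far side of the attachment region of $\mathbbm{b}_{j+1}$, so they are unnested and unlinked. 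Moreover, by Lemma \ref{lemma:isotopy} the standardizing isotopies preserve the relative (cyclic) order of the upper and lower endpoints of every arc, so this endpoint, together with those coming from all other chosen product disks, remains unlinked from all of them. Thus the configuration produces an arc, unlinked from all other arcs, that contributes maximally to $\tau$.

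\textbf{The direction $(\rightarrow\ \leftarrow)$.} Now $\widehat{\alpha}_j^-$ and $\widehat{\alpha}_{j+1}^-$ point toward each other. Since $\alpha_j^+$ is the co-core of $\mathbbm{b}_{j+1}$, Lemma \ref{lemma:cuspdirections} forces the lower endpoint of $\alpha_{j+1}^-$ to be the one rerouted past $\mathbbm{b}_j$, and hence the maximally-contributing endpoints of $\alpha_j^-$ and $\alpha_{j+1}^-$ interleave along $\lambda$, as depicted in Figure \ref{fig:two_bands_linked_arcs}. Therefore $\alpha_j^-$ and $\alpha_{j+1}^-$ form a pair of linked arcs in the sense of Definition \ref{defn:linked}.
\end{proof}
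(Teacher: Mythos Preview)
Your approach is the same as the paper's---a local figure-chase on the bands $\mathbbm{b}_j, \mathbbm{b}_{j+1}$---but there are two issues.

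First, a minor one: for the $(\leftarrow\ \leftarrow)$ case you cite Figure~\ref{fig:sigma1sigma2}, which depicts the subword $\sigma_1\sigma_2$ (different generators); the relevant picture for $\sigma_i\sigma_i$ is Figure~\ref{fig:two_arrows_same_direction}.

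Second, and more substantively: your argument that the arc is unlinked \emph{from all other arcs} does not go through. You verify only that $\alpha_j^-$ and $\alpha_{j+1}^-$ are unlinked from each other, and then invoke Lemma~\ref{lemma:isotopy} to conclude unlinking from every other $\alpha_\ell^-$. But the spinal isotopies of Lemma~\ref{lemma:isotopy} fix each $\alpha_j^-$ pointwise (only the $\alpha^+$ arcs move), so ``standardizing isotopies preserve cyclic order of endpoints'' is both trivially true and beside the point---it says nothing about how the endpoints of $\alpha_j^-$ sit relative to an arbitrary $\alpha_\ell^-$ elsewhere in the braid. The paper's argument is sharper and handles all other arcs at once: for $(\leftarrow)^2$, traversing $K$ from the maximally-contributing (upper) endpoint of $\alpha_j^-$ to its image (the corresponding endpoint of $\alpha_j^+$, which after standardization is the co-core of $\mathbbm{b}_{j+1}$), one encounters \emph{no} other maximally-contributing endpoint along the way. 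That single observation immediately shows $\alpha_j^-$ is unlinked from every arc, not just from $\alpha_{j+1}^-$. Your $(\rightarrow\ \leftarrow)$ case is fine and matches the paper's one-line appeal to Figure~\ref{fig:two_bands_linked_arcs}, though the sentence invoking Lemma~\ref{lemma:cuspdirections} to ``force the lower endpoint of $\alpha_{j+1}^-$ to be rerouted'' conflates cusp orientation with the purely geometric position of the arc endpoints.
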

\begin{proof}
First, suppose $(\sigma_i)^2$ is cusped via $(\leftarrow)^2$, as in the left picture in Figure \ref{fig:two_arrows_same_direction}. The bolded endpoints of $\alpha_j^{-}$ and $\alpha_{j+1}^{-}$ contribute maximally to $\tau$. Traversing $K$ from $\Diamond$, we first encounter the upper endpoint of $\alpha_{j}^{-}$, and then its image: no point that contributes maximally to $\tau$ occurs between them. Thus $\alpha_j^{-}$ is unlinked from all other arcs. Analogously, if $(\sigma_i)^2$ is cusped via $(\rightarrow)^2$, $\alpha_j^-$ is unlinked from all other arcs, as in the right picture of Figure \ref{fig:two_arrows_same_direction}. If $(\sigma_i)^2$ is cusped via $(\rightarrow \ \leftarrow)$, $\alpha_j^-$ and $\alpha_{j+1}^-$ are linked, as in Figure \ref{fig:two_bands_linked_arcs}.
\end{proof}

\begin{lemma} \label{lemma:sigma1sigma2}
Suppose the subword $\sigma_1 \sigma_2$ occurs as the $j$ and $j+1$ letters of $\beta$. The arcs $\alpha_j$ and $\alpha_{j+1}$, cusped as $(\leftarrow \ \rightarrow)$, are unlinked.
\end{lemma}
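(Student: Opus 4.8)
The plan is to mimic the proof of Lemma \ref{lemma:linkingarcs}, but now for a subword in which the two bands are of \emph{different} types, so the geometry is that of Figure \ref{fig:sigma1sigma2} rather than Figure \ref{fig:two_bands_linked_arcs}. First I would set up notation: let $\mathbbm{b}_j$ be the band attached between $S_1$ and $S_2$ (the band for the letter $\sigma_1$) and $\mathbbm{b}_{j+1}$ the band attached between $S_2$ and $S_3$ (the band for $\sigma_2$), both appearing consecutively and in the same block $\sigma_1^{a_i}\sigma_2^{b_i}$. Since $\sigma_1$ is the last $\sigma_1$ before a $\sigma_2$ in this block, the arc $\alpha_j^{+}$ is pushed (after standardization) into $S_2$, while $\alpha_{j+1}^{-}$ is isotopic to the co-core of $\mathbbm{b}_{j+1}$ and lives with one endpoint on $S_2$; these are precisely the arcs depicted in Figure \ref{fig:sigma1sigma2}.

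Next I would identify which endpoints contribute maximally to $\tau$ under the prescribed cusping $(\leftarrow \ \rightarrow)$. By Definition \ref{defn:endpoints}, the maximally-contributing endpoint of $\alpha_j^{-}$ is the one where $\widehat{\alpha}_j^{-}$ disagrees with the orientation of $\lambda$; with the cusp direction $\leftarrow$ on the $j$-th letter this is (referring to the bolding conventions of Figure \ref{fig:global_cusping_product_disk}) the upper endpoint of $\alpha_j^{-}$, planted just above the attachment site of $\mathbbm{b}_j$ on $S_1$. Similarly, with $\rightarrow$ on the $(j{+}1)$-st letter, the maximally-contributing endpoint of $\alpha_{j+1}^{-}$ is the lower endpoint of $\alpha_{j+1}^{-}$, sitting near the lower attachment site of $\mathbbm{b}_{j+1}$ on $S_3$. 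The point is that the two relevant endpoints lie near the feet of bands attached to \emph{disjoint} pairs of Seifert disks ($S_1$–$S_2$ versus $S_2$–$S_3$), on opposite sides of $S_2$.

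Then I would run the ``traverse $K$'' argument as in Lemma \ref{lemma:linkingarcs}: starting from a basepoint and reading around $\partial F$ (equivalently, along $K$), I check whether the two maximally-contributing endpoints interleave along $\lambda$ with their partners. Because $\mathbbm{b}_j$ and $\mathbbm{b}_{j+1}$ are bands of different types, the cyclic order in which one meets the four feet ($\mathbbm{b}_j$ top, $\mathbbm{b}_j$ bottom, $\mathbbm{b}_{j+1}$ top, $\mathbbm{b}_{j+1}$ bottom) along $\partial F$ is $\dots, \mathbbm{b}_j^{\mathrm{top}}, \mathbbm{b}_j^{\mathrm{bot}}, \dots, \mathbbm{b}_{j+1}^{\mathrm{top}}, \mathbbm{b}_{j+1}^{\mathrm{bot}}, \dots$ with no interleaving of the $\mathbbm{b}_j$-feet and the $\mathbbm{b}_{j+1}$-feet — exactly the picture in Figure \ref{fig:sigma1sigma2}. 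Hence the four endpoints $\partial\alpha_j^{-}, \partial\alpha_{j+1}^{-}$ occur along $\lambda$ in an \emph{un}-interleaved (nested or disjoint) pattern, so by Definition \ref{defn:linked} the arcs $\alpha_j$ and $\alpha_{j+1}$ are unlinked.

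The main obstacle I expect is purely bookkeeping: pinning down the exact cyclic order of the band feet along $\partial F$ in standard position, and confirming that the standardizing isotopy $\iota_j$ (which by the proof of Lemma \ref{lemma:cuspdirections} preserves the relative positions of the upper and lower endpoints of $\alpha_j^{+}$, and hence does not change whether endpoints of $\alpha_j^{-}$ are bolded) does not secretly introduce a linking. This is essentially the content already packaged in Figure \ref{fig:sigma1sigma2}, so I would lean on that figure, note that the only subtlety — the position of $\alpha_j^{+}$ after standardization — is irrelevant since linking of $\alpha_j$ with $\alpha_{j+1}$ is detected entirely by the $\alpha^{-}$ endpoints, and conclude. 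The argument is genuinely shorter than Lemma \ref{lemma:linkingarcs} precisely because the different-type condition makes non-linking automatic.
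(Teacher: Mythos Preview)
Your approach is essentially the paper's: the entire proof there is a one-line appeal to Figure~\ref{fig:sigma1sigma2}, and your traverse-$\partial F$ argument is just an unpacking of that picture. One small correction that does not affect the conclusion: the endpoints of $\alpha_{j+1}^{-}$ lie on $S_2$ (the left Seifert disk for the $\sigma_2$ band $\mathbbm{b}_{j+1}$), not on $S_3$; with that fix, the non-interleaving of the four $\alpha^{-}$ endpoints along $\lambda$ is exactly what Figure~\ref{fig:sigma1sigma2} records.
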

\begin{proof}
As in Figure \ref{fig:sigma1sigma2}, $\alpha_{j}^{-}$ is unlinked from $\alpha_{j+1}$.
\end{proof}

\subsection{Building Branched Surfaces:} \label{section:buildingbranchedsurfaces}

\begin{defn} \label{defn:typesabc}
$\beta$ has the form described in Equation \ref{3braidstandardform}. Then $\beta$ is one of Types A, B, or C described below:

\begin{tabular}{ll}
\textbf{Type A}: & $k=1$, and $\beta = \sigma_1^{a_1} \sigma_2^{b_1}$. For $\hat{\beta}$ to be a knot, $a_1$ and $b_1$ are both odd. \\
& Note: $\hat{\beta} = T(2,a_1) \# T(2,b_1)$. \\
\textbf{Type B}: & $k= 2$, and $b_1 = b_2 = 1$. So, $\beta = \sigma_1^{a_1} \sigma_2 \sigma_1^{a_2} \sigma_2$ \\
\textbf{Type C}: & all other positive 3-braid closures; namely: \\
& $\bullet \ k=2$ and (up to cyclic rotation) $a_1, a_2, b_1 \geq 2, b_2 \geq 1$ \\
& $\bullet \ k \geq 3, a_i \geq 2, b_i \geq 1$ for all $i$. \\
\end{tabular}
\end{defn}

Given a positive 3-braid knot, we construct a branched surface by fusing $c_1 + c_2 - 2$ product disks to $F \times \{\frac{1}{2}\}$, such that we have exactly one linked pair of arcs. Propositions \ref{prop:typea}, \ref{prop:typeb}, \ref{prop:typec} construct the branched surfaces for \textbf{Types A, B, and C} respectively.

\begin{prop} \label{prop:typea} (Building the branched surface for \textbf{Type A}) \\
Suppose $\beta = \sigma_1^{a_1} \sigma_2^{b_1}$ for $a_1, b_1$ odd, and $K = \hat{\beta}$.  There exists a sink-disk free branched surface $B \subset X_K$, for $K = T(2,a_1) \# T(2,b_1)$, with exactly one pair of linked arcs. Moreover, there exists a sub-train-track $\tau'$ of $\tau$ carrying all rational slopes $r < 2g(K)-1$.
\end{prop}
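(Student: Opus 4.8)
The plan is to treat Type A as the cleanest model case: here $\beta=\sigma_1^{a_1}\sigma_2^{b_1}$ is a single block (so $k=1$), and $\widehat\beta = T(2,a_1)\#T(2,b_1)$ is a connected sum of two $(2,n)$-torus knots, whose genus is $g(K)=\frac{(a_1-1)+(b_1-1)}{2}$, giving $2g(K)-1 = a_1+b_1-3 = c_1+c_2-3$. We have $c_1+c_2-2 = a_1+b_1-2$ product disks $D_1,\dots,D_{a_1+b_1-2}$: the disks $D_1,\dots,D_{a_1-1}$ come from consecutive pairs among the $a_1$ copies of $\sigma_1$ (each $\alpha_j^-$ a co-core of $\mathbbm b_j$, each $\alpha_j^+$ a co-core of $\mathbbm b_{j+1}$, all inside $S_1$ and $S_2$), and the disks $D_{a_1},\dots,D_{a_1+b_1-2}$ come from consecutive pairs among the $b_1$ copies of $\sigma_2$ (living in $S_2$ and $S_3$).

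First I would write down the cusping word. Guided by the $P(-2,3,7)$ computation and by Lemmas~\ref{lemma:sigmasinkdisk}, \ref{lemma:sinkdiskbandsector}, \ref{lemma:linkingarcs}, the natural choice is all arrows pointing the same way within each $\sigma_1$-run and within each $\sigma_2$-run, except for a single ``turnaround'' that produces the one linked pair. Concretely I would try cusp directions
\begin{align*}
&\ \sigma_1^{a_1}\qquad\quad \sigma_2^{b_1}\\
&(\leftarrow)^{a_1-1}(\rightarrow)\ (\leftarrow)^{b_1-2}(\rightarrow)
\end{align*}
(with the understanding that the last $\sigma_i$ in each run is uncusped, since we only get $a_1-1$, resp. $b_1-1$, disks from a run of length $a_1$, resp. $b_1$). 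By Lemma~\ref{lemma:linkingarcs}, the single $(\rightarrow\ \leftarrow)$ adjacency in the $\sigma_1$-run — i.e. between the disk cusped $(\rightarrow)$ and the first disk of... — hmm, I need to be careful about where exactly the one linked pair appears; the cleanest version is to cusp $(\leftarrow)^{a_1-1}$ then a single $(\rightarrow)$ at the last $\sigma_1$, so that the pair $\{\alpha^-_{a_1-2},\alpha^-_{a_1-1}\}$ (the one adjacency of the form $(\rightarrow\ \leftarrow)$ reading the run backwards, equivalently $(\leftarrow\ \rightarrow)$) — I will instead simply state: exactly one adjacent pair of disks in the $\sigma_1$-block is cusped $(\rightarrow\ \leftarrow)$, creating the unique linked pair by Lemma~\ref{lemma:linkingarcs}, and every other adjacency in both blocks is cusped $(\leftarrow\ \leftarrow)$ or $(\leftarrow\ \rightarrow)$, which by Lemmas~\ref{lemma:sigmasinkdisk} and \ref{lemma:linkingarcs} produce unlinked maximally-contributing arcs and no sink disks.

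Next I would verify $B$ is sink disk free. By Lemma~\ref{lemma:productdisksinkdisk}, no product-disk sector is a sink disk, so it remains to check the disk sectors $S_1,S_2,S_3$, the band sectors $\mathbbm b_j$, and the polygon sectors (there is exactly one polygon sector, arising where the last $\sigma_1$ meets the first $\sigma_2$, analogous to the Type~2 intersection in the $P(-2,3,7)$ example). For the band sectors I invoke Lemma~\ref{lemma:sigmasinkdisk}: since no adjacency is cusped $(\leftarrow\ \rightarrow)$ within a run, Lemma~\ref{lemma:sinkdiskbandsector} never applies, so each $\mathbbm b_j$ has some $\widehat\alpha^-$ or $\widehat\alpha^+$ pointing out. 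For $S_1$ I check that the uncusped-end arrow (the unique $\widehat\alpha_j^-$ or $\widehat\alpha_j^+$ pointing out of $S_1$, coming from the turnaround) works; similarly for $S_2$ and $S_3$, exactly as in the itemized check for $P(-2,3,7)$ in Section~\ref{subsection:step4b}. The single polygon sector is handled by exhibiting one cusped arc on its boundary pointing outward. Finally, for the train track: there are $c_1+c_2-2$ endpoints contributing maximally to $\tau$, and by Lemmas~\ref{lemma:linkingarcs} and \ref{lemma:sigma1sigma2} all of them are pairwise unlinked except for the one designated linked pair; hence, taking the sub-train-track $\tau'$ consisting of $\lambda$, all maximally-contributing sectors, and one extra sector $s$ with $\lambda\cup s$ carrying $(-\infty,0]$, Lemma~\ref{lemma:subtraintrackcarries} gives that $\tau'$ (and therefore $\tau$) carries all rational slopes in $(-\infty,(c_1+c_2-2)-1) = (-\infty,2g(K)-1)$.

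The main obstacle I expect is bookkeeping around the junction between the $\sigma_1$-block and the $\sigma_2$-block: the arc $\alpha^+$ of the last $\sigma_1$-disk and the arc $\alpha^-$ of the first $\sigma_2$-disk both pass near $S_2$, creating the lone polygon sector and the lone potential Type~2 intersection, and I must confirm (i) that this polygon sector is not a (half) sink disk under the chosen co-orientations, and (ii) that the two arcs meeting there are not accidentally linked in a way that would force a \emph{second} linked pair (which would cost a slope and break the bound $2g(K)-1$). I also need to double-check the parity/count: a run of $a_1$ copies of $\sigma_1$ yields $a_1-1$ disks, and the genus formula $2g(K)-1 = c_1+c_2-3$ from Section~\ref{FiberSurface3Braids} must match $a_1+b_1-3$, which it does since $c_1=a_1$, $c_2=b_1$. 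Everything else is a direct transcription of the $P(-2,3,7)$ argument, so once the junction is pinned down the proposition follows.
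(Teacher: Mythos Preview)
Your overall framework matches the paper's, but the execution has real gaps.

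First, you never actually fix a cusping word. You write $(\leftarrow)^{a_1-1}(\rightarrow)\,(\leftarrow)^{b_1-2}(\rightarrow)$, then parenthetically say the last $\sigma_i$ in each run is uncusped (which contradicts the $(\rightarrow)$ you just wrote there), then retreat to ``I will instead simply state: exactly one adjacent pair \dots is cusped $(\rightarrow\ \leftarrow)$''. That is not a proof: the disk-sector checks for $S_1,S_2,S_3$ depend on the specific cusping, and you never carry them out. Worse, under the most natural reading of your word --- positions $1,\dots,a_1-2$ cusped $(\leftarrow)$ and position $a_1-1$ cusped $(\rightarrow)$ --- you have a $(\leftarrow\ \rightarrow)$ adjacency at positions $a_1-2,a_1-1$, which by Lemma~\ref{lemma:sinkdiskbandsector} forces $\mathbbm{b}_{a_1-1}$ to be a half sink disk. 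You then compound this by asserting that adjacencies ``$(\leftarrow\ \leftarrow)$ or $(\leftarrow\ \rightarrow)$ \dots produce \dots no sink disks'': the second of these is exactly the forbidden pattern of Lemma~\ref{lemma:sinkdiskbandsector}. The paper avoids all of this by choosing $(\rightarrow)^{a_1-1}(\ \ )(\rightarrow)^{b_1-2}(\leftarrow)(\ \ )$, so every same-type adjacency is $(\rightarrow\ \rightarrow)$ except a single $(\rightarrow\ \leftarrow)$ in the $\sigma_2$-run.

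Second, your anticipated ``main obstacle'' does not exist: in Type~A there are \emph{no} polygon sectors and no Type~2 intersection. With $k=1$, the standardized $\alpha^+$ arcs from the $\sigma_1$-disks sit in $S_2$ near bands $\mathbbm{b}_2,\dots,\mathbbm{b}_{a_1}$, while the $\alpha^-$ arcs from the $\sigma_2$-disks sit in $S_2$ near bands $\mathbbm{b}_{a_1+1},\dots,\mathbbm{b}_{a_1+b_1-1}$; these are vertically disjoint. The Type~2 intersection in the $P(-2,3,7)$ example arises only because $\alpha_7^+$ must cross the $\sigma_2$-region to reach the next $\sigma_1$-band in the \emph{second} block --- impossible here. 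So the polygon-sector analysis you plan is vacuous, while the actual work (checking $S_1,S_2,S_3$ against a fixed cusping) is missing.

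Finally, you omit the degenerate cases $a_1=1$ or $b_1=1$, where $\widehat\beta$ destabilizes to $T(2,n)$ and a separate (shorter) cusping is needed.
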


\begin{proof}
First suppose $a_1, b_1 \geq 3$. We identify $c_1 + c_2 - 2 = a_1 + b_1 - 2$ product disks:
\begin{align} \label{case1cuspings}
\beta = \ \sigma_1^{a_1} \sigma_2^{b_1} \nonumber = & \ \sigma_1^{a_1 - 1} \ \ \ \ \  \sigma_1 \ \ \ \ \sigma_2^{b_1-2}\ \ \ \ \  \sigma_2 \ \ \ \  \sigma_2 \nonumber \\
 & (\rightarrow)^{a_1-1}\ \ (\ \ ) \ \ (\rightarrow)^{b_1-2}\ \ (\leftarrow)\ \ (\ \ )
\end{align}

The spine of the branched surface is built from $F \times \{\frac{1}{2}\}$, fused with the product disks specified. Applying Lemma \ref{lemma:isotopy} puts the product disks into standardized position; cusping as== instructed in (\ref{case1cuspings}) yields a branched surface $B$. In this case, all arcs on $F \times \frac{1}{2}$ are pairwise unlinked (see Figure \ref{fig:TypesABC} for an example). Lemma \ref{lemma:productdisksinkdisk} guarantees no product disk sector is a half sink disk, while Lemmas \ref{lemma:sigmasinkdisk} and \ref{lemma:sinkdiskbandsector} guarantee no band sectors are half sink disks. There are no polygon sectors. We check the disk sectors $S_1, S_2,$ and $S_3$ are not half sink disks. 
\begin{itemize}
\item  $\widehat{\alpha}_{1}^{-}$ points out of $S_1$.
\item $\widehat{\alpha}_{a_1+1}^{-}$ points out of the $S_2$ disk sector.
\item $\widehat{\alpha}_{a_1 + b_1 - 1}^{-}$ points into the $S_2$ disk sector, so $\widehat{\alpha}_{a_1 + b_1 - 1}^{+}$ points out of the $S_3$ disk sector. 
\end{itemize}

$B$ is sink disk free. By Lemma \ref{lemma:linkingarcs}, $\alpha_{a_1 + b_1 - 2}^{-}$ and $\alpha_{a_1 + b_1 - 1}^{-}$ are the unique pair of linked arcs.

Now suppose $a_1 \geq 3 \text{ and } b_1 = 1$, $a_1 = 1 \text{ and } b_3 \geq 1$, or $a_1 = b_1 = 1$. Then $\hat{\beta}$ is isotopic to $T(2, a_1)$, $T(2,b_1)$, or the unknot respectively. The canonical fiber surface for $K$ is produced after destabilization. The following instructions specify a construction of a branched surface for $T(2,n), n \geq 3$:
\begin{align*}
\beta = \ \sigma_1^n = & \ \sigma_1^{n-2} \ \ \ \ \ \sigma_1 \ \ \ \  \sigma_1 \\
 & (\rightarrow)^{n-2}\ \ (\leftarrow)\ \ (\ \ )
\end{align*}

Standardize the disks as in Lemma \ref{lemma:isotopy}. Lemmas \ref{lemma:productdisksinkdisk} and \ref{lemma:sinkdiskbandsector}, and \ref{lemma:sigmasinkdisk} guarantee no product disks or band sectors are half sink disks. There are no polygon sectors. $\widehat{\alpha}_1^{-}$ and $\widehat{\alpha}_{n-1}^{+}$ point out of $S_1$ and $S_2$ respectively, ensuring no disk sectors. Finally, Lemma \ref{lemma:linkingarcs} guarantees only $\alpha_{n-2}^{-}$ and $\alpha_{n-1}^{-}$ are linked. 

Thus for any $\beta = \sigma_1^{a_1} \sigma_2^{b_1}, \ a_1, b_1 \geq 1$ and odd, there exists a sink disk free branched surface $B$ with a unique pair of linked arcs. Including both sectors induced by $\alpha_{1}$ to $\tau'$ ensures that $\tau'$ carries all rational $r < 2g(K)-1$. 
\end{proof}

\begin{rmk} \label{rmk:compact_surface_type_A}
\textup{Eventually, we aim to conclude that $B$ is not just sink-disk-free, but that it is laminar. To do so, we need to modify the proof that $B$ does not fully carry an annulus (our proof of this in Proposition \ref{prop:laminarbranchedsurface} relied on a local model that does not apply for braids of Type $A$). However, this is straightforward: consider Figure \ref{fig:compact_surfaces}, and reverse the orientations on each of the cusp directions shown (i.e. $\widehat{\alpha}_1^-$ and $\widehat{\alpha}_2^-$ point out of $S_1$, and $\widehat{\alpha}_1^+$ points into $S_2$). The resulting local model for a branched surface now matches Type A branched surfaces. We preserve the labelling of the weights of each sector. The new cusp directions, combined with the switch relations at branch sectors, induce the following:
\begin{align*}
w_1 +w_4= w_2 \qquad w_3 +w_4= w_2 \qquad w_1 + w_5 = w_3
\end{align*}
Therefore, $w_1+w_4 = w_3+w_4$, which implies $w_1 = w_3$. Again, we conclude that $w_5=0$. We conclude that $B$ does not carry any compact surface, and therefore does not carry an annulus.}
\end{rmk}

\begin{figure}[h!]\center
\begin{minipage}{\textwidth}
\centering
\includegraphics[scale=.64]{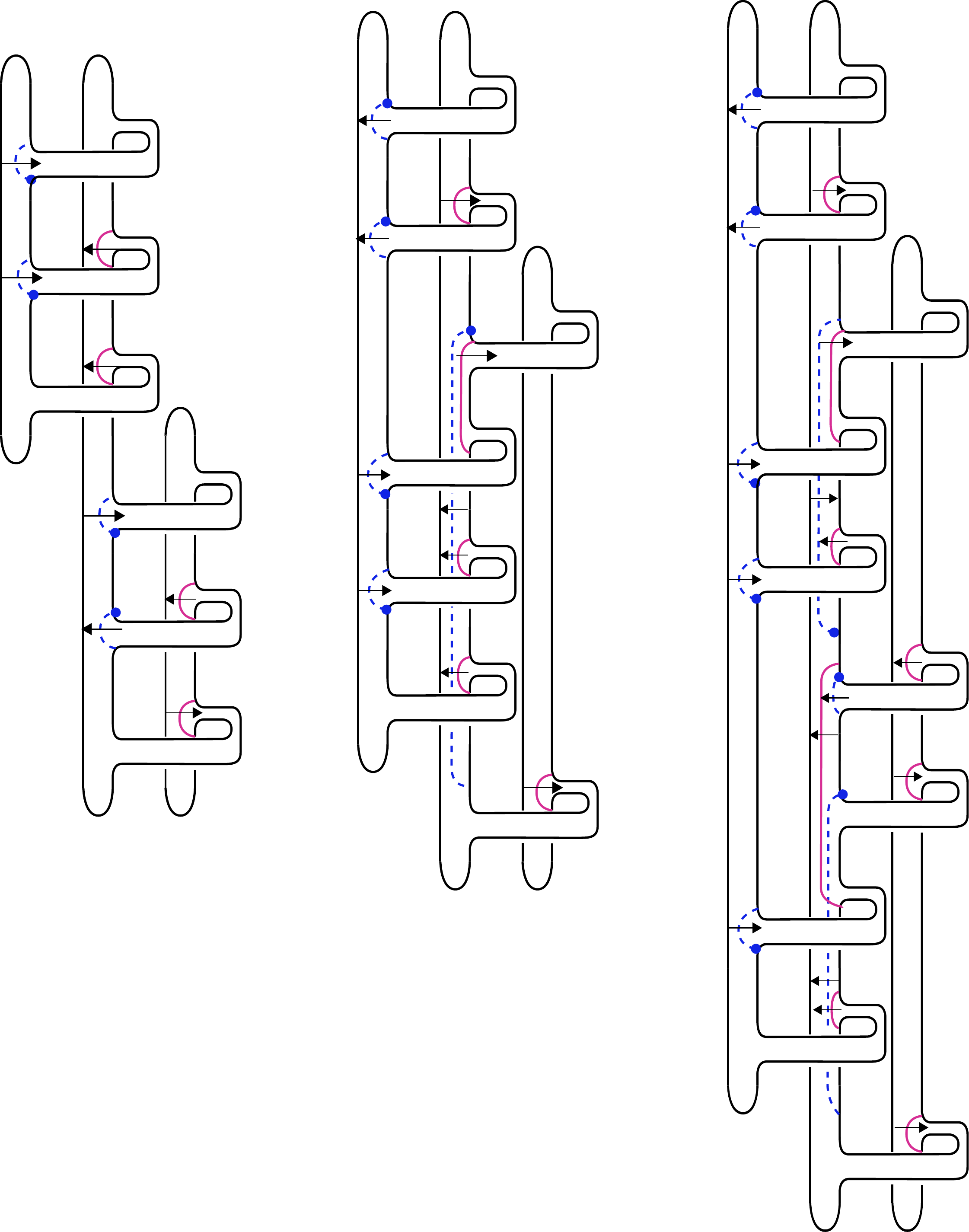}
\caption{From left to right: laminar branched surfaces of Types A, B, and C.} 
\label{fig:TypesABC}
\end{minipage}
\end{figure}

\begin{prop} \label{prop:typeb} (Building the branched surface for \textbf{Type B}) \\
Suppose $\beta = \sigma_1^{a_1} \sigma_2 \sigma_1^{a_2} \sigma_2, \ a_i \geq 2$ and $K = \hat{\beta}$.  There exists a sink-disk free branched surface $B \subset X_K$ with exactly one pair of linked arcs. Moreover, there is a sub-train-track $\tau'$ of $\tau$ carrying all rational slopes $r < 2g(K)-1$.
\end{prop}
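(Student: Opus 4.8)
The plan is to follow the template established in the $P(-2,3,7)$ example and in the proof of Proposition \ref{prop:typea}: exhibit an explicit cusping assignment in tandem with $\beta$, verify that the resulting branched surface $B$ is sink disk free, identify a unique pair of linked arcs, and then produce the sub-train-track $\tau'$. For a Type B braid $\beta = \sigma_1^{a_1}\sigma_2\sigma_1^{a_2}\sigma_2$ with $a_1,a_2\geq 2$, the natural choice mirrors the $P(-2,3,7)$ case (which is exactly $a_1=7, a_2=2$): cusp the first block's $\sigma_1^{a_1}$ by $(\leftarrow)^{a_1}$, then the first $\sigma_2$ by $(\rightarrow)$, then $\sigma_1^{a_2}$ by $(\leftarrow)^{a_2-1}(\ \ )$ (leaving one $\sigma_1$ uncusped, as in the $P(-2,3,7)$ pattern $(\leftarrow)(\rightarrow)$ on the $\sigma_1\sigma_1$), and the final $\sigma_2$ uncusped. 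This uses $a_1 + 1 + (a_2-1) + 0 = a_1 + a_2$ cusped disks; since $c_1+c_2-2 = (a_1+a_2)+2-2 = a_1+a_2$, the count is correct. I would include a figure analogous to Figure \ref{fig:fiber_surface_example} (indeed, Figure \ref{fig:TypesABC} already depicts the Type B branched surface) showing the standardized disks and their cusp directions.

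First I would invoke Lemma \ref{lemma:isotopy} to standardize all disks, so the branch sectors are exactly the three disk sectors $S_1,S_2,S_3$, the band sectors, the polygon sectors, and the product disk sectors. Lemma \ref{lemma:productdisksinkdisk} immediately rules out product disk sectors being (half) sink disks. For the band sectors sitting inside $S_1$: the runs $(\leftarrow)^{a_1}$ and $(\leftarrow)^{a_2-1}(\ \ )$ are handled by Lemma \ref{lemma:sigmasinkdisk} (the relevant subwords $\sigma_i\sigma_i$ cusped $(\leftarrow)^2$ are fine, and we have deliberately avoided the forbidden $(\leftarrow\ \rightarrow)$ pattern flagged by Lemma \ref{lemma:sinkdiskbandsector}), exactly as in the $P(-2,3,7)$ verification. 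The band sector(s) associated to the $\sigma_2$'s are handled similarly. For the disk sectors: $\widehat{\alpha}_1^-$ points out of $S_1$; one of the $\alpha^+$ arcs isotopic to a co-core of a $\sigma_2$-band points out of $S_3$; and one of the arcs in the first block points out of $S_2$ — these are the direct analogues of the three bullet points checked for $P(-2,3,7)$. The polygon sectors (there will be a pair, as in the $P(-2,3,7)$ case, living in $S_2$, with boundary among the arcs near the block junctions) are checked by exhibiting a specific $\widehat{\alpha}_j^-$ pointing out, again copying the $P_u, P_\ell$ analysis. Then Lemma \ref{lemma:linkingarcs} identifies the unique linked pair: the $(\rightarrow)(\leftarrow)$-type interaction between the arc from the first $\sigma_2$ and an arc from the second block produces exactly one linked pair (in $P(-2,3,7)$ this was $\alpha_8^-,\alpha_9^-$), and all the $(\leftarrow)^2$ runs produce only unlinked maximally-contributing arcs by Lemma \ref{lemma:linkingarcs}.

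For the train track statement, I would build $\tau'$ from the $c_1+c_2-2 = a_1+a_2$ endpoints contributing maximally to $\tau$, of which exactly two are linked, so $\tau'$ carries $[0, a_1+a_2-1) = [0, 2g(K)-1)$; appending the upper endpoint of the appropriate arc (giving a sector $s$ with $\lambda \cup s$ carrying $(-\infty,0]$) extends this to $(-\infty, 2g(K)-1)$. By Lemma \ref{lemma:subtraintrackcarries}, $\tau$ carries all these slopes as well. This is verbatim the $P(-2,3,7)$ endgame with $a_1+a_2$ in place of $10$, and with $2g(K)-1 = c_1+c_2-3 = a_1+a_2-1$.

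The main obstacle I anticipate is purely bookkeeping: correctly handling small-case degeneracies (e.g.\ $a_1=2$ or $a_2=2$, where a run $(\leftarrow)^{a_i}$ has length $2$ and one must be careful that the "uncusped" $\sigma_1$ in the second block and the uncusped final $\sigma_2$ still leave enough cusped arcs pointing out of $S_2$ and $S_3$), and verifying that the polygon sectors are configured exactly as in the $P(-2,3,7)$ model rather than in some new way. I expect no genuinely new phenomenon — Type B is the "small" generalization of the worked example — so the proof should be a careful transcription of the Section \ref{section:example} argument with $a_1, a_2$ general, accompanied by the Type B panel of Figure \ref{fig:TypesABC}. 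I would end by noting, as in Remark \ref{rmk:compact_surface_type_A}, that the argument of Proposition \ref{prop:laminarbranchedsurface} that $B$ carries no compact surface applies directly here (the local model near $\sigma_1\sigma_1$ is the standard one), so once sink-disk-freeness is established, Proposition \ref{prop:laminarbranchedsurface} upgrades $B$ to a laminar branched surface.
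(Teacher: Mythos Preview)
There is a genuine gap: your proposed cusping assignment produces a half sink disk at the $S_1$ disk sector. Under your directions $(\leftarrow)^{a_1}(\rightarrow)(\leftarrow)^{a_2-1}(\ \ )(\ \ )$, every cusped $\sigma_1$ is assigned $(\leftarrow)$. After standardization the only branch-locus arcs bounding the $S_1$ sector are the $\alpha_j^-$ arcs associated to $\sigma_1$ bands, and $(\leftarrow)$ on such an arc points \emph{into} $S_1$ (compare the $P(-2,3,7)$ verification, where $S_1$ is rescued by $\widehat{\alpha}_{10}^-$ cusped $(\rightarrow)$, or the Type~A check, where $\widehat{\alpha}_1^-$ with $(\rightarrow)$ points out of $S_1$). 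Your assertion that ``$\widehat{\alpha}_1^-$ points out of $S_1$'' is therefore false under your own assignment. A secondary issue: $P(-2,3,7)=\widehat{\sigma_1^7\sigma_2^2\sigma_1^2\sigma_2}$ has $b_1=2$, so it is Type~C, not Type~B; the worked example does not literally instantiate the Type~B pattern you are trying to mimic.

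The paper's fix is to swap the orientations you assigned to the $\sigma_2$ and the second block: it uses $(\leftarrow)^{a_1}(\leftarrow)(\rightarrow)^{a_2-1}(\ \ )(\ \ )$. Now $\widehat{\alpha}_{a_1+2}^-$ (a $\sigma_1$ cusped $(\rightarrow)$) points out of $S_1$; $\widehat{\alpha}_1^+$ points out of $S_2$; $\widehat{\alpha}_{a_1+1}^+$ points out of $S_3$; and the single polygon sector in $S_2$ has $\widehat{\alpha}_{a_1+1}^-$ pointing out of it. The unique linked pair is $\alpha_{a_1}^-,\alpha_{a_1+1}^-$ (the last $\sigma_1$ of the first block and the adjacent $\sigma_2$, both cusped $(\leftarrow)$), not a $(\rightarrow)(\leftarrow)$ pair of the kind in Lemma~\ref{lemma:linkingarcs}. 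The rest of your outline --- invoking Lemmas~\ref{lemma:productdisksinkdisk}, \ref{lemma:sigmasinkdisk}, \ref{lemma:sinkdiskbandsector} for product/band sectors, counting $c_1+c_2-2$ disks with one linked pair to get $\tau'$ carrying $[0,2g(K)-1)$, and appending a negative-slope sector --- is correct once the cusping is repaired.
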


\begin{proof}
The spine of the branched surface is built from $F \times \{\frac{1}{2}\}$, fused with the product disks specified below:
\begin{align} \label{case2cuspings}
\beta &= \sigma_1^{a_1} \sigma_2 \sigma_1^{a_2} \sigma_2 \nonumber \\
&= \sigma_1^{a_1} \ \ \ \ \ \sigma_2 \ \  \sigma_1^{a_2-1}\ \ \ \sigma_1 \ \ \sigma_2 \nonumber \\ 
&= (\leftarrow)^{a_1} (\leftarrow) (\rightarrow)^{a_2-1}(\ \ )(\ \ )
\end{align}

Lemma \ref{lemma:isotopy} puts the product disks into standardized position. Cusping the disks as specified in (\ref{case2cuspings}) yields a branched surface, as in Figure  \ref{fig:TypesABC}. By Lemma \ref{lemma:productdisksinkdisk}, no product disk sector is a half sink disk. No disk sectors are half sink disks:
\begin{itemize}
\item $\widehat{\alpha}_{a_1+2}^{-}$ points out of the $S_1$ disk sector
\item $\widehat{\alpha}_{1}^{-}$ points into the $S_1$ disk sector, so $\widehat{\alpha}_{1}^{+}$ points out of the $S_2$ disk sector
\item $\widehat{\alpha}_{a_1+1}^{-}$ points into the $S_2$ disk sector, so $\widehat{\alpha}_{a_1+1}^{+}$ points out of the $S_3$ disk sector
\end{itemize}

Lemmas \ref{lemma:sigmasinkdisk} and \ref{lemma:sinkdiskbandsector} guarantee no band sectors are sink disks. It remains to check the single polygon sector $P$, which lies in Seifert disk $S_2$. The boundary of $P$ meets $\alpha_j^{+}, a_1+2 \leq j \leq c_1+c_2-2, \alpha_{a_1 + 1}^{-}, \alpha_{a_1}^{+}$, and no other arcs $\alpha_j^{\pm}$. Since $\widehat{\alpha}_{a_1+1}^{-}$ points out of $P$, it is not a half sink disk. Thus, our branched surface $B$ is sink disk free. 

We are fusing $c_1 + c_2 - 2$ product disks to $F \times \{ \frac{1}{2} \}$, so there exists a sub-train-track $\tau'$ with $c_1 + c_2 -2$ sectors. By Lemmas \ref{lemma:linkingarcs} and \ref{lemma:sigma1sigma2}, $\alpha_{a_1}^{-}$ and  $\alpha_{a_1+1}^{-}$ are the unique pair of linked arcs. Thus $\tau'$ carries all slopes in $[0, c_1 + c_2 - 3) = [0, 2g(K)-1)$. Including both sectors induced by $\alpha_{a_1 + 1}$ to $\tau'$ ensures that $\tau'$ carries all slopes $r<2g(K)-1$. 
\end{proof}

The most nuanced construction arises in \textbf{Case C}:

\begin{prop} \label{prop:typec} (Building the branched surface for \textbf{Case C}) \\
Let $K = \hat{\beta}$, where $\beta$ is of \textbf{Case C} (see Definition \ref{defn:typesabc}). There exists a sink-disk free branched surface $B \subset X_K$ with a unique pair of linked arcs. Moreover, there is a sub-train-track $\tau'$ of $\tau$ carrying all rational slopes $r < 2g(K)-1$.
\end{prop}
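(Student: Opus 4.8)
The plan is to follow the template already established in Propositions \ref{prop:typea} and \ref{prop:typeb}: starting from the $c_1+c_2-2$ product disks supplied by Section \ref{subsection:step1}, I will write down an explicit choice of cusp directions in the notation of Definition \ref{defn:cuspbraidword}, standardize the disks via Lemma \ref{lemma:isotopy}, and then check in turn that (i) the resulting branched surface $B$ is sink disk free, (ii) exactly one pair of linked arcs occurs, and (iii) a sub-train-track $\tau'$ of the induced boundary train track $\tau$ carries every rational slope $r < 2g(K)-1 = c_1+c_2-3$.

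For the cusping, I would leave the two terminal letters of $\beta$ (the last $\sigma_1$ and the last $\sigma_2$, i.e.\ the bands of each type that receive no product disk $D_j$) uncusped, and cusp all the letters inside any one block $\sigma_1^{a_i}$ or $\sigma_2^{b_i}$ consistently in a single direction, with exactly one exception: in one designated block of length $\geq 2$ (for instance any $\sigma_1$-block, all of which have length $a_i\geq 2$ in the standard form \eqref{3braidstandardform}) cusp the last letter opposite to the rest, as in $(\rightarrow)^{a_i-1}(\leftarrow)$. By Lemma \ref{lemma:sigmasinkdisk} this trailing $(\rightarrow\ \leftarrow)$ does not create a half sink disk --- it sits at the end of a block, so the forbidden pattern $(\leftarrow\ \rightarrow\ \star)$ of Lemma \ref{lemma:sinkdiskbandsector} never appears --- and by Lemma \ref{lemma:linkingarcs} it produces precisely one linked pair. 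The orientations of the remaining blocks are then fixed block by block so that $(\leftarrow\ \rightarrow)$ never occurs inside a block and so that each of the three disk sectors $S_1,S_2,S_3$ has some $\widehat\alpha_j^{\pm}$ pointing out of it; I expect this bookkeeping to be organized around the two families in Definition \ref{defn:typesabc}, namely $k\geq 3$ and the case $k=2$ (where $a_1,a_2,b_1\geq 2$, possibly after a cyclic rotation).

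Sink-disk-freeness is then verified sector by sector exactly as in Section \ref{subsection:step4b}: product disk sectors are never sink disks by Lemma \ref{lemma:productdisksinkdisk}; band sectors are excluded by Lemmas \ref{lemma:sigmasinkdisk} and \ref{lemma:sinkdiskbandsector} together with the no-$(\leftarrow\ \rightarrow)$ condition; and the disk sectors are handled by the block-orientation choices above. The delicate part --- and the step I expect to be \emph{the main obstacle} --- is the polygon sectors. Unlike Type B, which has a single polygon sector, a Case C braid produces several polygon sectors in $S_2$, roughly one near each junction of a $\sigma_1$-block with an adjacent $\sigma_2$-block, and the boundary of such a polygon is a mixture of arcs $\alpha_j^{+}$ coming from one side of the junction with arcs $\alpha_\ell^{-}$ coming from the other, so whether a prescribed $\widehat\alpha$ points out of the polygon depends on the orientations chosen for both neighboring blocks. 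My plan is to isolate a local model of a single junction (in the spirit of Figures \ref{fig:two_bands_linked_arcs}, \ref{fig:sigma1sigma2} and \ref{fig:TypesABC}) and show that, for every pair of neighboring orientations consistent with the constraints above, some cusped arc along the boundary of the polygon points out of it; the general assertion then follows by applying this uniformly at every junction.

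Finally, with $B$ sink disk free and carrying a unique linked pair, the sub-train-track $\tau'$ assembled from the $c_1+c_2-2$ maximally-contributing endpoints carries $[0,(c_1+c_2-2)-1)=[0,2g(K)-1)$ by the discussion preceding Definition \ref{defn:linked}, and appending to $\tau'$ one further sector --- the non-maximally-contributing sector of a single arc, so that $\lambda$ together with it carries $(-\infty,0]$ --- extends this to $(-\infty,2g(K)-1)$, exactly as in the $P(-2,3,7)$ computation of Section \ref{subsection:step5a}; Lemma \ref{lemma:subtraintrackcarries} then transfers the conclusion to $\tau$ itself. Beyond the polygon sectors, the remaining care points are confirming that the cyclic rotation used in the $k=2$ sub-case is compatible with leaving the last $\sigma_1$ and last $\sigma_2$ uncusped, and ruling out a second linked pair at any $\sigma_1\sigma_2$ junction via Lemma \ref{lemma:sigma1sigma2}.
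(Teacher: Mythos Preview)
Your plan follows the same template as the paper's proof --- fuse the $c_1+c_2-2$ product disks to $F\times\{\tfrac12\}$, standardize via Lemma~\ref{lemma:isotopy}, choose cuspings so that exactly one $(\rightarrow\ \leftarrow)$ occurs and no $(\leftarrow\ \rightarrow)$ occurs inside a run of the same generator, then verify sector by sector using Lemmas~\ref{lemma:productdisksinkdisk}, \ref{lemma:sigmasinkdisk}, \ref{lemma:sinkdiskbandsector}, \ref{lemma:linkingarcs}, \ref{lemma:sigma1sigma2} and finish with the train-track count. You have also correctly singled out the polygon sectors in $S_2$ as the crux.

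The substantive difference is that the paper does \emph{not} leave the cusping to be determined: it writes down a single explicit scheme
\[
(\leftarrow)^{a_1}\,(\rightarrow)(\leftarrow)^{b_1-1}\,(\rightarrow)^{a_2}\,(\leftarrow)^{b_2}\,(\rightarrow)^{a_3}\,(\leftarrow)^{b_3}\cdots(\rightarrow)^{a_k-1}(\ \ )(\leftarrow)^{b_k-1}(\ \ ),
\]
and then checks that scheme directly. In particular, the unique linked pair lives in the \emph{first $\sigma_2$-block} (the first two occurrences of $\sigma_2$, cusped $(\rightarrow)(\leftarrow)$), not in a $\sigma_1$-block as you propose, and every other $\sigma_1$-block gets $(\rightarrow)$ while every other $\sigma_2$-block gets $(\leftarrow)$. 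This uniform alternation is what drives the polygon analysis: each Type~2 intersection point in $S_2$ splits its polygon region into an \emph{upper} piece $P_{u,m}$ and a \emph{lower} piece $P_{\ell,m}$, and the paper shows (separately for block $1$ and for blocks $t\ge2$) that a specific arc --- $\widehat\alpha_{a_1+1}^{\,-}$, $\widehat\alpha_{a_1+b_1}^{\,-}$, $\widehat\alpha_{a_1+\cdots+a_t}^{\,+}$, or $\widehat\alpha_{a_1+\cdots+a_t+b_t}^{\,-}$ --- points out of each.

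Your alternative of placing the linked pair in a $\sigma_1$-block and then trying to prove the polygon claim ``for every pair of neighboring orientations consistent with the constraints'' is riskier than it sounds: the out-pointing arc for $P_{u,m}$ versus $P_{\ell,m}$ comes from \emph{opposite} sides of the junction, so not all consistent block-orientation pairs will work simultaneously, and you would end up having to commit to a single global pattern anyway. The paper's move is to make that commitment first and let the polygon check be a routine verification; your plan postpones exactly the step where the content lies.
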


\begin{proof}
The spine of the branched surface is built from $F \times \{\frac{1}{2}\}$, fused with the product disks specified by:
\begin{align} \label{case3cuspings}
\beta &= \sigma_1^{a_1} \sigma_2^{b_1} \sigma_1^{a_2} \sigma_2^{b_2} \ldots \sigma_1^{a_k} \sigma_2^{b_k} \nonumber \\
&= \sigma_1^{a_1} \ \ \ (\sigma_2)\ ( \sigma_2^{b_1-1}) \ \ \sigma_1^{a_2} \ \ \ \sigma_2^{b_2} \ \ \   \sigma_1^{a_3}\ \ \ \  \sigma_2^{b_3}\ \ \ \ldots (\sigma_1^{a_k-1})( \sigma_1) (\sigma_2^{b_k-1})(\sigma_2)  \nonumber \\
&= (\leftarrow)^{a_1} (\rightarrow)(\leftarrow)^{b_1-1} (\rightarrow)^{a_2} (\leftarrow)^{b_2} (\rightarrow)^{a_3} (\leftarrow)^{b_3}\ldots (\rightarrow)^{a_k-1}(\ \ )(\leftarrow)^{b_k-1}(\ \ )
\end{align}

Applying Lemma \ref{lemma:isotopy} puts the product disks into standardized position. Cusping the disks as specified in (\ref{case3cuspings}) yields a branched surface $B$. See Figure \ref{fig:TypesABC} for an example. 

We check for half sink disks: by Lemma \ref{lemma:productdisksinkdisk}, no product disk sector is a half sink disk. No disk sector is a half sink disk:
\begin{itemize}
\item $\widehat{\alpha}_{a_1+b_1+1}^{-}$ points out of the $S_1$ disk sector
\item $\widehat{\alpha}_{1}^{-}$ points into the $S_1$ disk sector, $\widehat{\sigma}_{1}^{+}$ points out of the $S_2$ disk sector
\item whether $k=2$ or $k=3$, there exists a $\sigma_2$ letter in $\beta$ cusped via $(\leftarrow)$. The corresponding image arc will point out of the $S_3$ disk sector
\end{itemize}

Lemmas \ref{lemma:sigmasinkdisk} and \ref{lemma:sinkdiskbandsector} guarantee no band sectors are sink disks. 

It remains to analyze polygon sectors. Unlike the cases analyzed in Propositions \ref{prop:typea} and \ref{prop:typeb}, there may be intersection points between $\alpha^{+}$ and $\alpha^{-}$ arcs. Each intersection point will occur between consecutive blocks.
Moreover, each intersection point indicates the existence of two polygon sectors. Reading from top-to-bottom, we number the intersection points $i_1, \ldots, i_m, \ldots i_{n}$. We note that $n$ is bounded above by $k-1$, where $k$ is the total number of blocks in $\beta$. Moreover, $n=k-1$ if and only if for every $t$, $b_t \geq 2$. In particular, the intersection point $i_m$ does \textit{not} have to occur between the blocks $m$ and $m+1$. For example, in the rightmost diagram in Figure \ref{fig:TypesABC}, the unique intersection point $i = i_1$ occurs between blocks 2 and 3.

As an intersection point indicates the existence of a pair of polygon sectors, we will identify the individual polygon sectors by their relative position. The polygon sectors associated to the intersection point $i_m$ are labelled $P_{u,m}$ and $P_{\ell,m}$, and called \textit{upper polygon} and \textit{lower polygon} sectors respectively. 

We first analyze the behavior of $b_1$. If $b_1 = 1$, then we have a single polygon sector $P$. It is not a half sink disk, as $\widehat{\alpha}_{a_1}^{+}$ points out of the region; see (Figure \ref{fig:polygon_b1}, left). If $b_1 \geq 2$, we have a pair of polygon sectors to analyze; see (Figure \ref{fig:polygon_b1}, right): 

\begin{minipage}[t]{0.5\textwidth}
$\bullet$ The boundary of $P_{u,1}$ meets the arcs
\begin{itemize}
\item[$\circ$] $\alpha_{j}^{-}, a_1+1 \leq j \leq a_1+b_1$
\item[$\circ$] $\alpha_{a_1}^{+}$, 
\end{itemize}
\end{minipage}
\begin{minipage}[t]{0.5\textwidth}
$\bullet$ The boundary of $P_{\ell, 1}$ meets the arcs
\begin{itemize}
\item[$\circ$] $ \alpha_{a_1 + b_1}^{-}$,
\item[$\circ$] $\alpha_{a_1}^{+}$, 
\item[$\circ$] $\alpha_j^{+}, a_1+b_1+1 \leq j \leq a_1+b_1+a_2-1$, 
\end{itemize}
\end{minipage}

\begin{figure}[h!]\center
\labellist
\pinlabel {$\mathbbm{b}_{a_1}$} at 600 1230
\pinlabel {$\mathbbm{b}_{a_1+1} = \mathbbm{b}_{a_1+b_1}$} at 960 920
\pinlabel {$\mathbbm{b}_{a_1+b_1+1}$} at 660 630
\pinlabel {$\mathbbm{b}_{a_1+b_1+2}$} at 660 300
\pinlabel {$\alpha_{a_1}^-$} at 10 1200
\pinlabel {$\alpha_{a_1+b_1+1}^-$} at -55 610
\pinlabel {$\alpha_{a_1+b_1+2}^-$} at -55 300
\pinlabel {$P$} at 450 100
\pinlabel {$\mathbbm{b}_{a_1}$} at 1850 1220
\pinlabel {$\mathbbm{b}_{a_1+1}$} at 2070 925
\pinlabel {$\mathbbm{b}_{a_1+b_1}$} at 2080 550
\pinlabel {$\mathbbm{b}_{a_1+b_1+1}$} at 1900 280
\pinlabel {$P_{u,1}$} at 1715 725
\pinlabel {$P_{\ell,1}$} at 1715 100
\pinlabel {$\alpha_{a_1}^-$} at 1260 1200
\pinlabel {$\alpha_{a_1+b_1+1}^-$} at 1200 250
\endlabellist
\includegraphics[scale=.2]{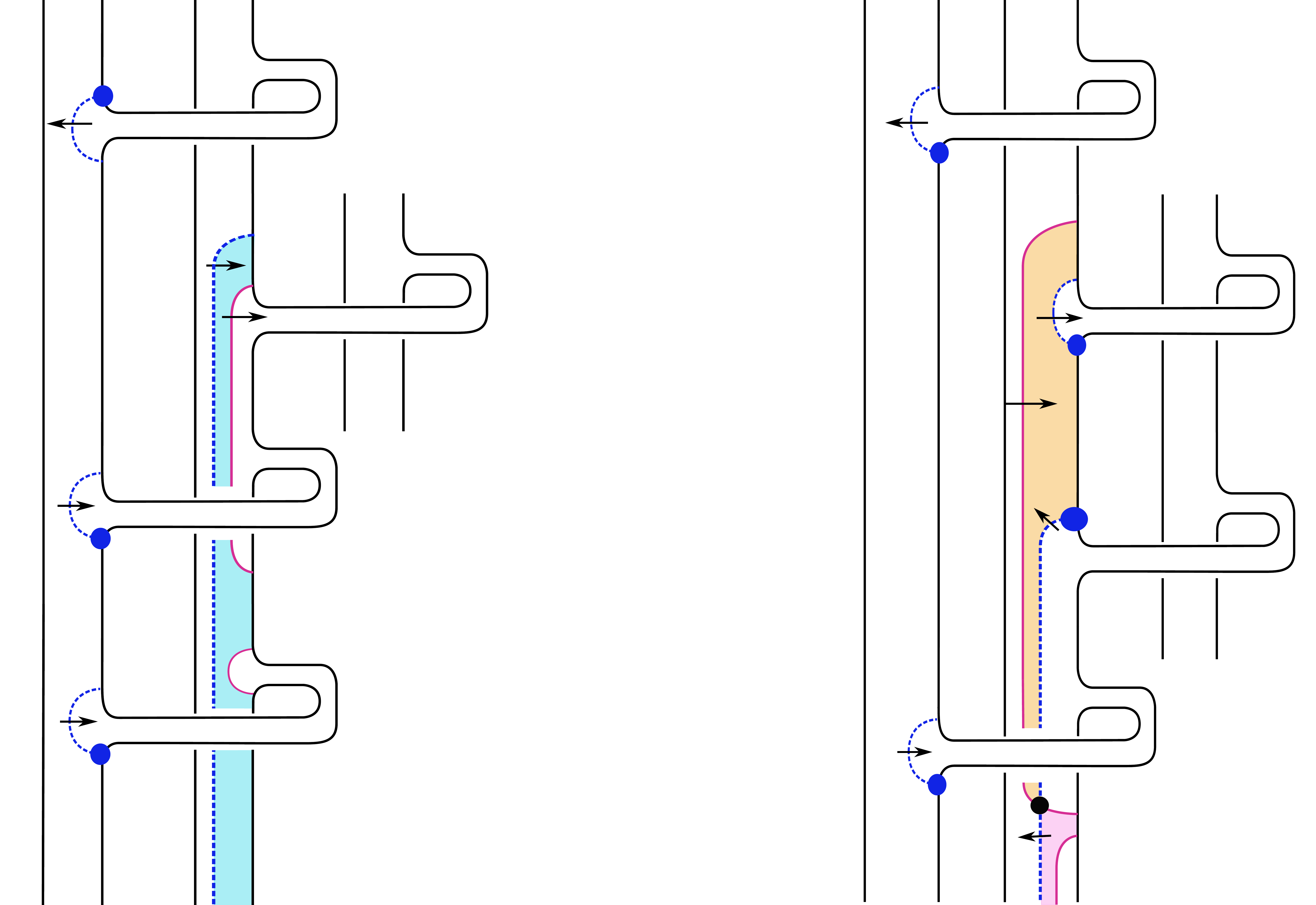}
\caption{Studying $b_1$. On the left, we have $b_1=1$. The shaded region is the single polygon sector $P$, which is not a sink disk, as $\widehat{\alpha}_{a_1}^{+}$ points out of it. On the right, an example with $b_1=2$. The upper and lower polygon sectors, $P_{u,1}$ and $P_{\ell,1}$, are shaded; these polygon sectors meet at the point $i_1$ (not labelled, but indicated in the diagram). $P_{u,1}$ is not a sink disk because $\widehat{\alpha}_{a_1+1}^-$ points out of it. $P_{\ell,1}$ is not a sink disk because $\widehat{\alpha}_{a_1+b_1}^-$ points out of it.} 
\label{fig:polygon_b1}
\end{figure}

Since $\widehat{\alpha}_{a_1+1}^{-}$ points out of $P_{u,1}$, and $\widehat{\alpha}_{a_1+b_1}^{-}$ points out of $P_{\ell, 1}$, neither are half sink disks. 

We now analyze the remaining polygon sectors. If, for $q \geq 2$, the $q^{\text{th}}$ block has $b_q = 1$, there will be a single polygon region. It is not a half sink disk because $\widehat{\alpha}_{a_1+b_1+\ldots+a_q}^{-}$ points out of it region (see Figure \ref{fig:polygon_b2}, left). If $b_q \geq 2$, then the polygon sectors come in pairs; all such pairs can be analyzed simultaneously (see Figure \ref{fig:polygon_b2}, right). Suppose $i_m$ is the intersection point between $P_{u,m}$ and $P_{\ell,m}$, which occur at the transition from block $t$ to block $t+1$. For the pair $P_{u,m}$ and $P_{\ell,m}$:

\newpage
\begin{itemize}
\item the boundary of $P_{u,m}$ meets the arcs
\begin{itemize}
\item[$\circ$] $\alpha_j^{-}$, where $a_1+b_1 + \ldots a_t + 1 \leq j \leq a_1+b_1 + \ldots + a_t + b_t$
\item[$\circ$] $\alpha_{a_1+b_1 +\ldots + a_t}^{+}$ \\
\end{itemize}

\item the boundary of $P_{\ell, m}$ meets the arcs
\begin{itemize}
\item[$\circ$] $\alpha_{a_1 + b_1+\ldots+a_t+b_t}^{-}$
\item[$\circ$] $\alpha_{a_1+b_1 +\ldots + a_t}^{+}$
\item[$\circ$] $\alpha_{j}^{+}$, where $a_1+b_1+\ldots+b_t+1 \leq j \leq a_1+b_1+\ldots+b_t+a_{t+1} -1$
\end{itemize}
\end{itemize}

For each $2 \leq m \leq n$, $P_{u,m}$ is not a sink disk: $\widehat{\alpha}_{a_1+b_1+\ldots+a_t}^{+}$ points out of it. Furthermore, $P_{\ell,m}$ has $\widehat{\alpha}_{a_1+b_1+\ldots+a_t+b_t}^{-}$ pointing out of it. Thus $B$ is sink disk free. 

\begin{figure}[h!]\center
\labellist
\pinlabel {$\mathbbm{b}_{a_1+b_1+\ldots+a_q}$} at 710 1220
\pinlabel {$\mathbbm{b}_{a_1+b_1+\ldots+a_q+b_q}$} at 960 920
\pinlabel {$\mathbbm{b}_{a_1+b_1+\ldots+a_q+b_q+1}$} at 780 630
\pinlabel {$\mathbbm{b}_{a_1+b_1+\ldots+a_q+b_q+2}$} at 780 300
\pinlabel {$\alpha_{a_1+b_1+\ldots+a_q}^-$} at -110 1200
\pinlabel {$\mathbbm{b}_{a_1+b_1+\ldots+a_t}$} at 1950 1220
\pinlabel {$\mathbbm{b}_{a_1+\ldots+a_t+1}$} at 2150 925
\pinlabel {$\mathbbm{b}_{a_1+\ldots+a_t+b_t}$} at 2170 550
\pinlabel {$\mathbbm{b}_{a_1+\ldots+b_t+1}$} at 1950 270
\pinlabel {$P_{u,m}$} at 1715 725
\pinlabel {$P_{\ell,m}$} at 1715 100
\pinlabel {$\alpha_{a_1+\ldots+a_t}^-$} at 1180 1200
\endlabellist
\includegraphics[scale=.185]{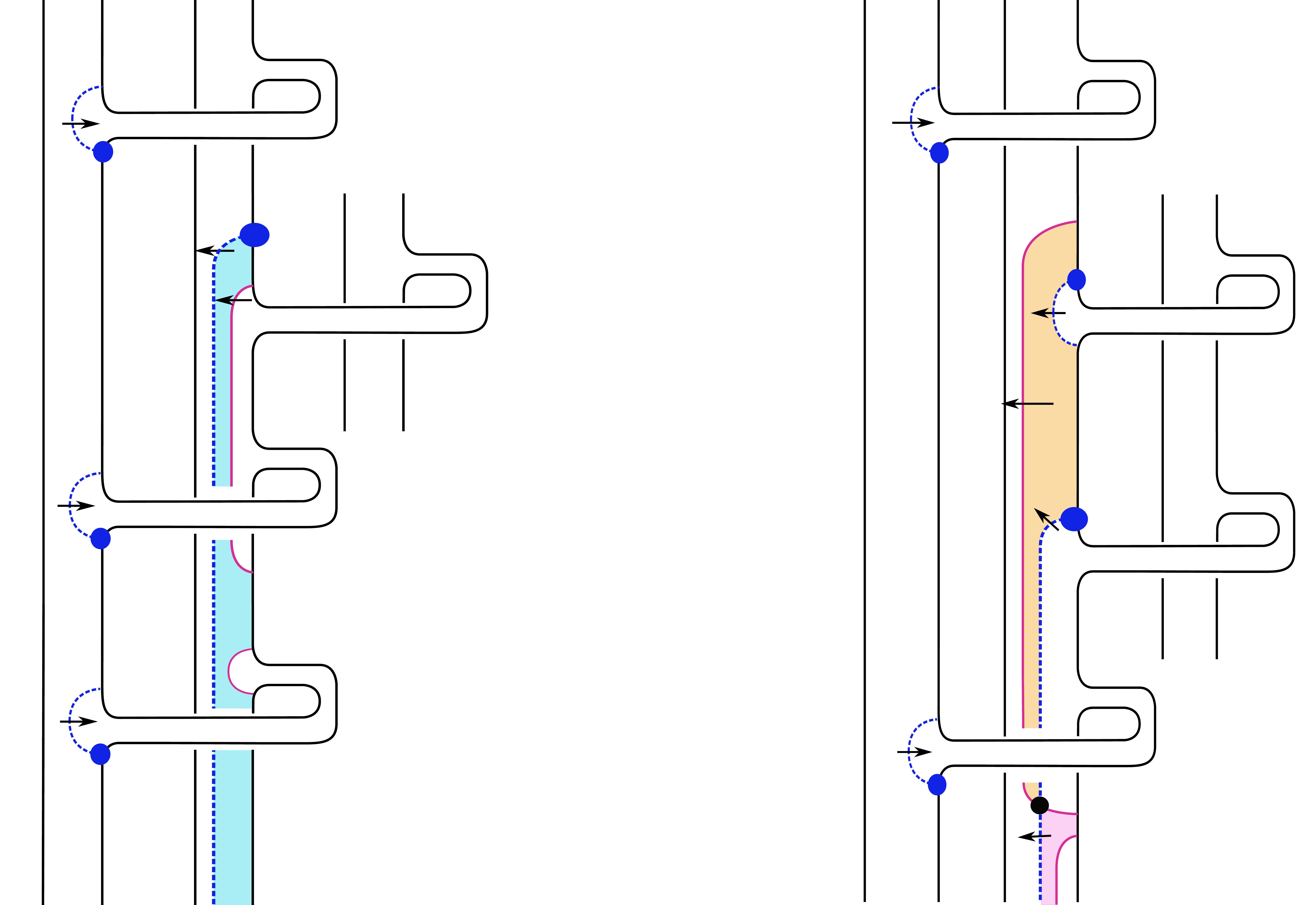}
\caption{On the left: we have $b_q=1$. The shaded region is the single polygon sector, which is not a sink disk because $\widehat{\alpha}_{a_1+b_1+\ldots+a_q}^{+}$ points out of it. On the right: an example with $b_m=2$. The upper and lower polygon sectors, $P_{u,m}$ and $P_{\ell,m}$, are shaded; they meet at the point $i_m$ (not labelled, but indicated in the diagram). $P_{u,m}$ is not a sink disk because $\widehat{\alpha}_{a_1+\ldots+a_t}^+$ points out of it. $P_{\ell,m}$ is not a sink disk because $\widehat{\alpha}_{a_1+\ldots+a_t+b_t}^-$ points out of it.} 
\label{fig:polygon_b2}
\end{figure}

We cusped $(c_1 - 1) + (c_2 - 1)$ arcs. By Lemma \ref{lemma:linkingarcs}, there exists a single linked pair, arising from the arcs associated to the first two occurrences of $\sigma_2$ in $\beta$. Thus, there exists a sub-train-track $\tau'$ carrying all slopes in $[0, c_1 + c_2 - 3) = [0, 2g(K)-1)$.  Including the sectors induced by $\alpha_{a_1 + 1}$ to $\tau'$ ensures that $\tau'$ carries all rational $r < 2g(K)-1$. 
\end{proof}

\subsection{Finale}

We conclude this section with the proof of the main theorem. 

\begin{proof}[Proof of Theorem \ref{thm:main}]
Let $K \subset S^3$ be the closure of a positive 3-braid $\beta$. After isotopy, $\beta$ has the form specified by Equation \ref{3braidstandardform}, and by Definition \ref{defn:typesabc} is Type A, B or C. By Propositions \ref{prop:typea}, \ref{prop:typeb}, \ref{prop:typec}, there exists a branched surface $B \subset X_K$
inducing a sub-train-track $\tau'$ carrying all rational slopes in the interval $(-\infty, \ 2g(K)-1)$. $B$ is laminar Proposition \ref{prop:laminarbranchedsurface} (we note that if $\beta$ is Type A, then we additionally apply Remark \ref{rmk:compact_surface_type_A}). Applying Theorem \ref{thm:taolisinkdisk} yields a family of essential laminations $\{ \mathcal{L}_r  \ | \ r \in (-\infty, 2g(K)-1) \cap \Q \}$, where $\mathcal{L}_r$ meets $\partial X_K$ in simple closed curves of slope $r$. Proposition \ref{prop:extendlamination} extends the essential lamination $\mathcal{L}_r$ to a taut foliation $\mathcal{F}_r$ in $X_K$, foliating $\partial X_K$ by simple closed curves of slope $r$. Performing $r$-framed Dehn filling yields $S_r^3(K)$ endowed with a taut foliation.
\end{proof}

\section{Proof of Theorem \ref{thm:1bridgebraids}} \label{section:1bridgebraids}

We generalize the techniques developed in Sections \ref{section:example} and \ref{section:3braids} to produce taut foliations in 1-bridge braid exteriors. Gabai defines a 1-bridge braid $K(w, b, t)$ in $D^2 \times S^1$ to be a knot, realized as the closure of a positive braid $\beta$, which is specified by three parameters: $w$, the braid index; $b$, the bridge width; and $t$, the twist number:
$\beta = (\sigma_b \sigma_{b-1} \ldots \sigma_2 \sigma_1) (\sigma_{w-1} \sigma_{w-2} \ldots \sigma_2 \sigma_1)^t$
where $1 \leq b \leq w-2, \ 1 \leq t \leq w-2$ \cite{Gabai:1BridgeBraids}. We consider a slightly more general definition:

\begin{defn} \label{defn:1bridgebraid}
A (positive) 1-bridge braid $K$ in $S^3$ is a knot realized as the closure of a braid $\beta$ on $w$-strands, where $$\beta = \underbrace{(\sigma_b \sigma_{b-1} \ldots \sigma_2 \sigma_1)}_{\textup{bridge subword}} (\sigma_{w-1} \sigma_{w-2} \ldots \sigma_2 \sigma_1)^t$$ for $ w \geq 3, \ 1 \leq b \leq w-2, \ t \geq 1$. We call the first $b$ letters of $\beta$ the \textbf{bridge subword}.
\end{defn}

In particular, we allow a 1-bridge braid in $S^3$ to have arbitrarily large twist number.

\begin{rmk}
There are no 1-bridge braids with $w=3$; we may assume $w \geq 4$.
\end{rmk}

\noindent \textbf{Theorem \ref{thm:1bridgebraids}.}
\textit{Let $K$ be a (positive) 1-bridge braid in $S^3$. Then for every $r \in (-\infty, g(K)) \cap \Q$, the knot exterior $X_K := S^3 - \accentset{\circ}{\nu}(K)$ admits taut foliations meeting the boundary torus $T$ in parallel simple closed curves of slope $r$. Moreover, the manifold obtained by $r$-framed Dehn filling, $S^3_r(K)$, admits a taut foliation.}

Every 1-bridge braid $K$ is a fibered knot in $S^3$. As in Theorem \ref{thm:main}, proving Theorem \ref{thm:1bridgebraids} requires building a laminar branched surface $B$ from a copy of the fiber surface $F$ and a collection of product disks. 

\begin{defn}
Let $\mathcal{B}_w$ denote the braid group on $w$ strands. Suppose $\beta' \in \mathcal{B}_w$ such that $\beta' = \sigma_m \sigma_{m-1} \sigma_{m-2} \ldots \sigma_2 \sigma_1$, with $1 \leq m \leq w-1$. We call the canonical fiber surface $F'$ for $\beta'$, built from $w$ disks and $m$ 1-handles, a \textbf{horizontal slice}. 
\end{defn}

We can view the canonical fiber surface $F$ for a 1-bridge braid $K(w,b,t)$ as built by vertically stacking $t+1$ horizontal slices, $\mathbbm{h}_0, \mathbbm{h}_1, \ldots, \mathbbm{h}_{t+1}$: numbering the horizontal slices from top-to-bottom, the horizontal slice $\mathbbm{h}_0$ comes from the bridge subword; the remaining $t$ horizontal slices $\mathbbm{h}_1, \ldots \mathbbm{h}_{t}$ come from the $t$ occurrences of the subword $\sigma_{w-1} \sigma_{w-2} \ldots \sigma_{2} \sigma_1$ in $\beta$; see (Figure \ref{fig:1BB_horizontal_both}, upper) for an example.

\begin{figure}[h!]\center
\labellist
\pinlabel {$\mathbbm{h}_s$} at 400 730
\pinlabel {$\mathbbm{h}_{s+1}$} at 410 620
\pinlabel {$\mathbbm{h}_s$} at 460 440
\pinlabel {$\mathbbm{h}_{s+1}$} at 470 330
\endlabellist
\includegraphics[scale=.605]{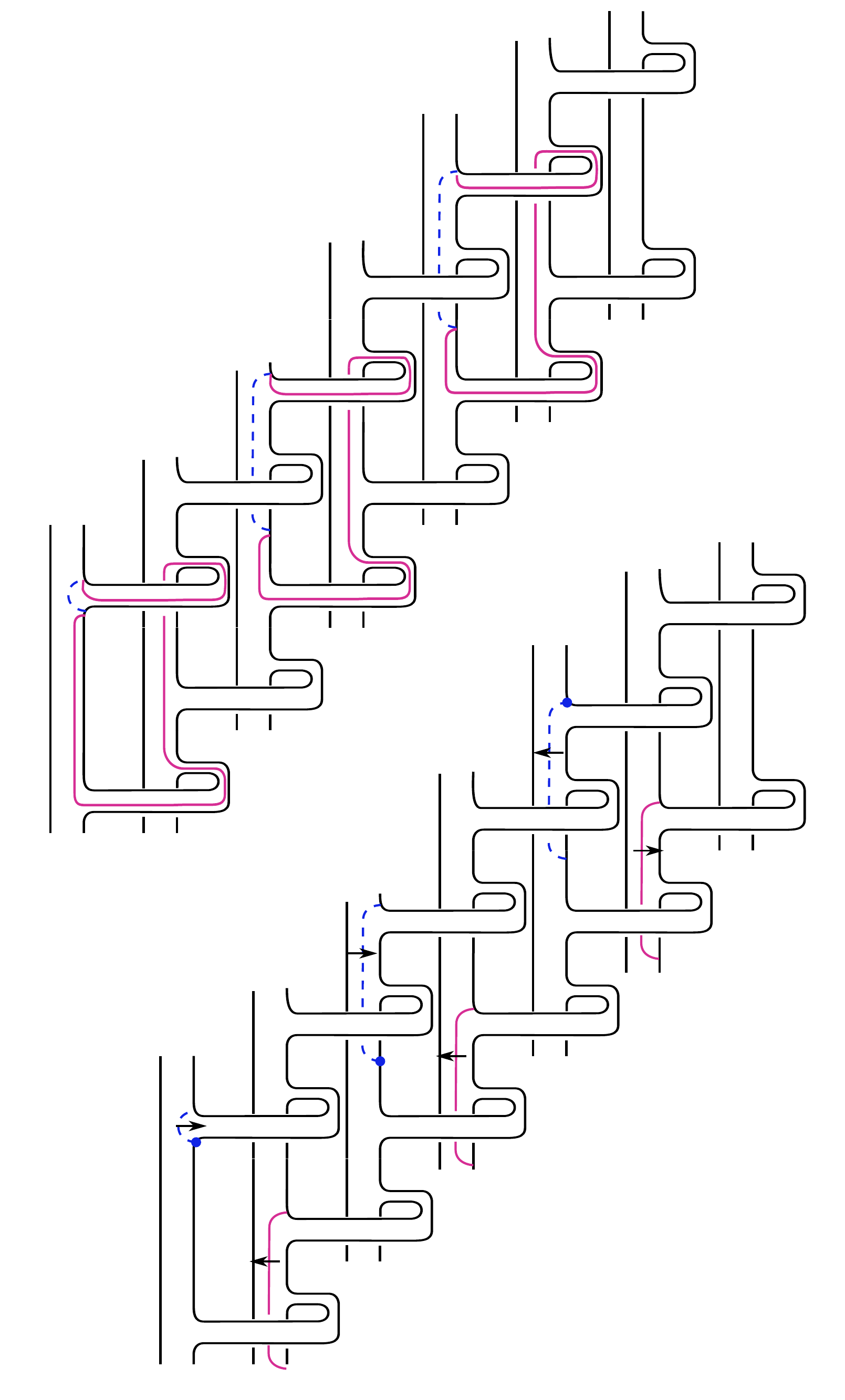}
\caption{Upper: the consecutive horizontal slices $\mathbbm{h}_s$ and $\mathbbm{h}_{s+1}$ of a 1-bridge braid fiber surface. As in Figure \ref{fig:2productdisks}, we have identified three product disks, by indicating where the disks meet $F^{-} \cup F^{+}$. The disks look like those in Figure \ref{fig:eg_productdisk}. Lower: we first performed a spinal isotopy so that the $\alpha^{+}$ arcs lie only in Seifert disks, and then co--oriented the disks as shown. The result is a portion of a branched surface.} 
\label{fig:1BB_horizontal_both}
\end{figure}

\begin{defn}
A Seifert disk $S_i$ is \textbf{odd (even)} if $i$ is odd (even). 
\end{defn}

As in Sections \ref{section:example} and \ref{section:3braids}, we provide cusping directions alongside $\beta$. That is, given a 1-bridge braid $K$ with braid word presented as in Definition \ref{defn:1bridgebraid}, we will choose disjoint product disks as in Section \ref{subsection:step1}. The boundaries of these disks will lie entirely in consecutive Seifert disks and consecutive bands of the same type; see (Figure \ref{fig:1BB_horizontal_both}, upper). As in Definition \ref{defn:cuspbraidword} (with the paragraph preceding it) and Section \ref{section:3braids}, the data of the disks and their co-orientations are recorded in tandem with the braid word; see (Figure \ref{fig:1BB_horizontal_both}, lower) for example of a portion of the resulting branched surface.

\begin{prop} \label{prop:1BB_branchedsurface}
For $K$ a 1-bridge braid in $S^3$, the following cusping directions specify a sink disk free branched surface:
\begin{itemize}
\item $\sigma_i$ is cusped via $( \ \ ) \iff i$ is even, or $i$ is odd and $\sigma_i$ is associated to a 1-handle used to build $\mathbbm{h}_{t}$.
\item Otherwise, $\sigma_i$ is cusped via $( \leftarrow )$ or $( \rightarrow )$, as specified below:
\begin{itemize}
\item[$\circ$]  The first occurrence of $\sigma_i$ in $\beta$ is cusped $(\leftarrow)$.
\item[$\circ$] All other occurrences of $\sigma_i$ in $\beta$ are cusped via $(\rightarrow)$.
\end{itemize}
\end{itemize}
\end{prop}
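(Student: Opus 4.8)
The plan is to verify that the branched surface $B$ determined by the stated cusping directions is sink disk free by checking, one family of branch sectors at a time, that each contains a cusped arc pointing out of it; this is exactly the strategy used for Propositions \ref{prop:typea}, \ref{prop:typeb}, and \ref{prop:typec}. First I would set up the bookkeeping: after choosing the disjoint product disks as in Section \ref{subsection:step1} and standardizing them via the analogue of Lemma \ref{lemma:isotopy}, the fiber surface $F$ is built from Seifert disks $S_1, \ldots, S_w$ and bands grouped into the horizontal slices $\mathbbm{h}_0, \mathbbm{h}_1, \ldots, \mathbbm{h}_t$ (see Figure \ref{fig:1BB_horizontal_both}), and the branch sectors of $B$ fall into four types exactly as in Definition \ref{defn:typesofbranchsectors}: product-disk sectors, the Seifert-disk sectors containing $S_1, \ldots, S_w$, band sectors, and polygon sectors, the last arising from the finitely many intersection points between $\alpha^{+}$ and $\alpha^{-}$ arcs.

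The product-disk sectors are immediate: Lemma \ref{lemma:productdisksinkdisk} applies verbatim, since its proof only invokes Lemma \ref{lemma:cuspdirections}, which holds for the product disks of any fibered knot. For the band sectors, I would record that each relevant local picture is a copy of $\sigma_i\sigma_i$ or $\sigma_i\sigma_i\sigma_i$ occurring in consecutive horizontal slices, and then apply the analogues of Lemmas \ref{lemma:sigmasinkdisk} and \ref{lemma:sinkdiskbandsector}. The cusping rule --- first occurrence of $\sigma_i$ cusped $(\leftarrow)$, all later occurrences cusped $(\rightarrow)$, with even-indexed $\sigma_i$ and the odd-indexed $\sigma_i$ of $\mathbbm{h}_t$ uncusped --- is chosen precisely so that no band sector ever sees the forbidden pattern $(\leftarrow\ \rightarrow)$ of Lemma \ref{lemma:sinkdiskbandsector}, only the permitted patterns $(\leftarrow)^2$, $(\rightarrow)^2$, $(\rightarrow\ \leftarrow)$ of Lemma \ref{lemma:sigmasinkdisk}; the two ``ends'' of the stack (band sectors meeting $\mathbbm{h}_0$, where a given $\sigma_i$ may first appear only in $\mathbbm{h}_1$, and band sectors meeting $\mathbbm{h}_t$, whose odd-indexed bands are uncusped) must be treated by hand, exhibiting in each a free cusped arc pointing out.

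For the Seifert-disk sectors I would argue that, for each $i$, some cusped $\widehat{\alpha}^{\pm}$ points out of the sector containing $S_i$. Here the parity in the cusping rule does the work: because even-indexed $\sigma_i$ are uncusped, the arcs $\alpha^{+}$ that get standardized into a given $S_i$ are controlled, and the rule ``first occurrence of an odd $\sigma_i$ is $(\leftarrow)$'' places a cusp of some $\alpha^{-}$ pointing out of $S_i$ (equivalently, by Lemma \ref{lemma:cuspdirections}, a cusp of some $\alpha^{+}$ pointing out of $S_{i+1}$); assembling these over $i$, with $S_1$ and $S_w$ handled separately using $\mathbbm{h}_0$ and $\mathbbm{h}_t$ respectively, shows no Seifert-disk sector is a half sink disk. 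Finally, the polygon sectors: each intersection point of an $\alpha^{+}$ with an $\alpha^{-}$ occurs at the transition between consecutive horizontal slices (or inside $\mathbbm{h}_0$), and gives an upper/lower pair of polygon sectors whose bounding arcs are identified exactly as in the Type C analysis (Figures \ref{fig:polygon_b1} and \ref{fig:polygon_b2}); in each case the cusping rule leaves a cusped arc pointing out, so no polygon sector is a half sink disk.

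I expect the Seifert-disk and polygon-sector bookkeeping to be the main obstacle: with $w \geq 4$ strands the pattern of which arcs lie in, and point out of, each disk sector is considerably more intricate than in the $3$-braid case, and the two distinguished slices $\mathbbm{h}_0$ (the bridge subword) and $\mathbbm{h}_t$ (whose odd bands are uncusped) break the otherwise uniform behavior. The heart of the argument is to show that the single parity-based rule simultaneously (a) keeps every Seifert-disk sector from being a half sink disk and (b) never produces the forbidden $(\leftarrow\ \rightarrow)$ band pattern --- the tension between these two requirements is exactly why even-indexed bands are left uncusped and why $\mathbbm{h}_t$ is treated differently from $\mathbbm{h}_1, \ldots, \mathbbm{h}_{t-1}$.
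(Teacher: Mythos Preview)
Your overall strategy---verify that each branch sector has an outward-pointing cusped arc---is correct and matches the paper's approach. But you have missed the central structural simplification, and your band-sector analysis is miscalibrated.

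\textbf{There are no polygon sectors.} After standardization, every $\alpha_j^{-}$ lies in an \emph{odd} Seifert disk (it is the co-core of an odd-indexed band), and by Lemma \ref{lemma:cuspdirections} every $\alpha_j^{+}$ lies in an \emph{even} Seifert disk. Since no Seifert disk contains both a $-$ arc and a $+$ arc, there are no intersection points and hence no polygon sectors whatsoever. The parity in the cusping rule is designed precisely to achieve this. So the part of your plan you flag as ``the main obstacle'' simply does not arise; the Type~C polygon analysis is irrelevant here.

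\textbf{The band sectors are not local.} Because all even-indexed $\sigma_i$ are uncusped, the bands within a single horizontal slice $\mathbbm{h}_s$ are all fused into \emph{one} band sector $\mathbbm{B}_s$; there are at most $t+1$ band sectors in total, one per slice. Consequently Lemmas \ref{lemma:sigmasinkdisk} and \ref{lemma:sinkdiskbandsector} do not transfer directly: your claim that the rule avoids the pattern $(\leftarrow\ \rightarrow)$ is false (the first two occurrences of any odd $\sigma_i$ are cusped exactly $(\leftarrow)(\rightarrow)$), but this does not produce a half sink disk because the relevant band sector is the entire slice, which has many other cusped arcs on its boundary. The paper handles this by exhibiting, for each $\mathbbm{B}_s$, a specific arc (e.g.\ $\widehat{\alpha}_b^{-}$ for $\mathbbm{B}_0$, $\widehat{\alpha}_{b+(i-1)(w-1)}^{-}$ for $\mathbbm{B}_i$ with $i\geq 2$) pointing out, with $\mathbbm{B}_1$ treated separately via a case analysis on $t\in\{1,2\}$ versus $t\geq 3$ and on the parities of $w$ and $b$. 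Your Seifert-disk analysis would also need this case split: for small $t$ or when $b=w-2$, several Seifert disks merge with each other or with band sectors, and the paper's Lemma \ref{lemma:1bbdisksectors} only covers $S_1,\ldots,S_{b}$ or $S_{b+1}$ directly.
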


\begin{proof}
Following Sections \ref{section:example} and \ref{section:3braids}, the directions above specify arcs $\alpha_{j}^{-}$; applying the monodromy to these arcs produces the product disks $\{D_j\}$. Build the spine for a branched surface from $F \times \{\frac{1}{2}\}$ and $\{D_j\}$. Applying the proof of Lemma \ref{lemma:isotopy} splits the spine of $B$, putting the disks in standard position. After standardizing, all $\alpha_j^{-}$ lie in odd Seifert disks $S_i$, and all $\alpha_{j}^{+}$ lie in even Seifert disks. Choosing co-orientations for $\{D_j\}$ as specified by the instructions provided yields a branched surface $B$. See Figure \ref{fig:1BB_example} for an example of such a branched surface.

\begin{figure}[h!]\center
\begin{minipage}{\textwidth}
\centering
\includegraphics[scale=.6]{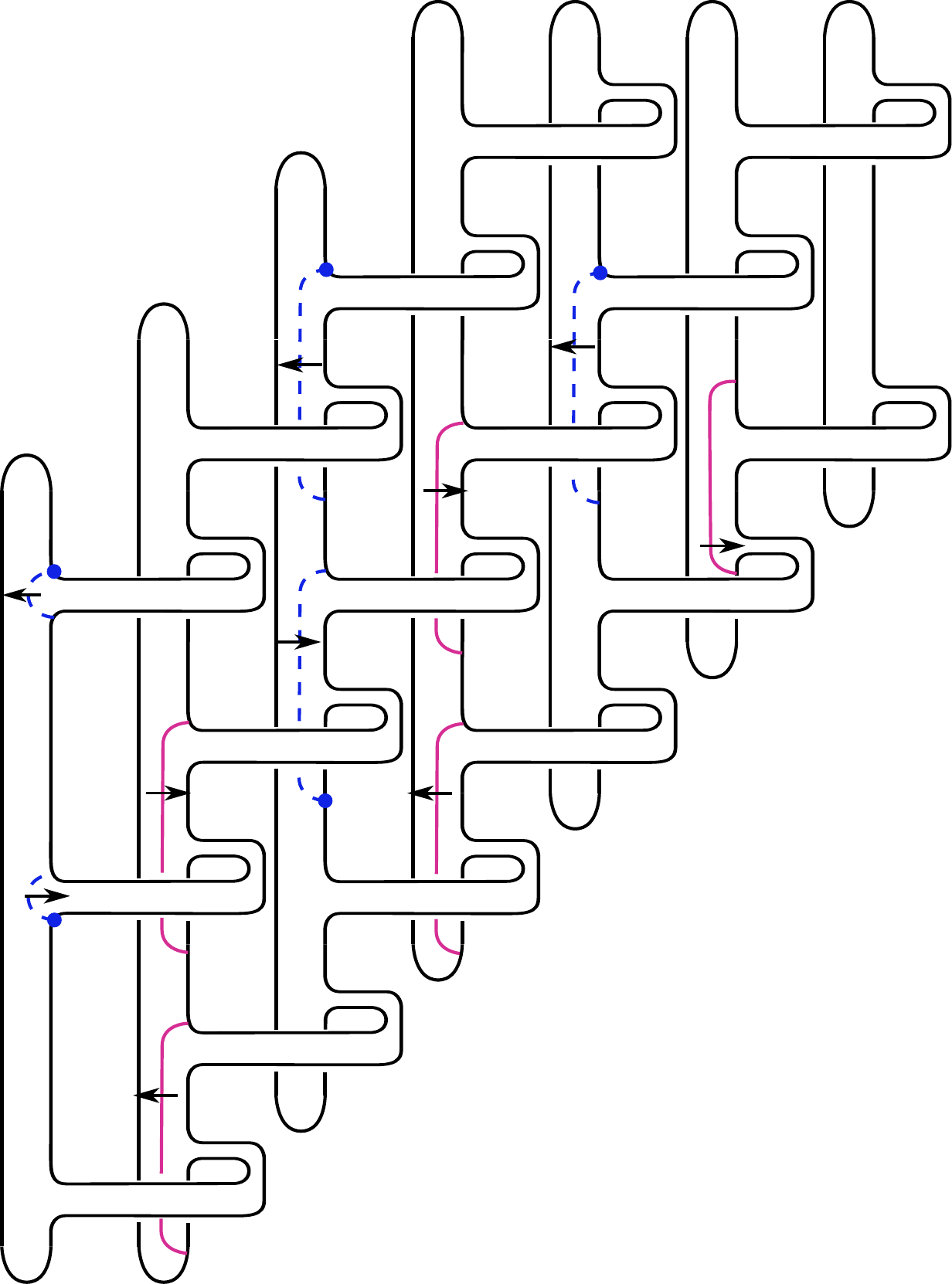}
\caption{A laminar branched surface for the 1-bridge braid $K(7,4,2)$} 
\label{fig:1BB_example}
\end{minipage}
\end{figure}

We check $B$ has no sink disks. No Seifert disk $S_i$ contains both $\alpha_j^{-}$ and $\alpha_\ell^{+}$ arcs, thus there are no polygon sectors. It suffices to check that no disk and band sectors are sink disks. There are at most $t+1$ band sectors: one for each horizontal slice $\mathbbm{h}_0, \mathbbm{h}_1, \ldots, \mathbbm{h}_{t}$.

\begin{defn}
The branch sector containing the bands in $\mathbbm{h}_{i}$ is the \textbf{$i^{\text{th}}$ band sector}, and denoted $\mathbbm{B}_i$.
\end{defn}

We consider 3 cases: $t = 1$, $t = 2$, and $t \geq 3$. 

If $t=1$, then after destabilizing, $K = K(w,b,1) \approx T(b+1,2) \approx T(2,b+1)$ as knots in $S^3$. In Proposition \ref{prop:typea}, we constructed a laminar branched surface $B$ for any knot $K = T(2,n)$, where the induced train track $\tau$ carried all slopes $(-\infty, 2g(K)-1)$. Appealing to Theorem \ref{thm:main} yields a stronger result than the one we seek for Theorem \ref{thm:1bridgebraids}.

Before treating the $t=2$ and $t \geq 3$ cases, we prove:

\begin{lemma} \label{lemma:1bbdisksectors}
Let $B$ be the branch surface described above, for $K(w,b,t)$ with $t \geq 2$. If $b$ is odd (resp. even), the disk sectors $S_1, \ldots S_{b+1}$ (resp. $S_1, \ldots S_b$) are not half sink disks.
\end{lemma}
\vspace{-.6cm}
\begin{proof} \phantom{\qedhere}
If $b$ is odd (resp. even), then every odd Seifert disk among $S_1, \ldots, S_b$ (resp. $S_1, \ldots, S_{b-1}$) contains arcs $\alpha_j^{-}$ cusped via both $(\leftarrow)$ and $(\rightarrow)$ (this is guarenteed since $t \geq 2$). Lemma \ref{lemma:cuspdirections} guarantees all Seifert disks $S_1, S_2, \ldots, S_{b+1}$ (resp. $S_1, S_2, \ldots, S_{b}$) contain arcs cusped via both $(\leftarrow)$ and $(\rightarrow)$. Each of these disks contains an outward pointing cusped arc, hence they are not half sink disks. This completes the proof of Lemma \ref{lemma:1bbdisksectors}.
\end{proof}

We return to the proof of Proposition \ref{prop:1BB_branchedsurface}. 

If $t=2$, we have a three subcases:
\begin{itemize}
\item $b=w-2$, $b \equiv w \equiv 0 \mod 2$ 

No band sectors are half sink disks: $\widehat{\alpha}_b^-$, $\widehat{\alpha}_{b+1}^{-}$, and $\widehat{\alpha}_{b+w-1}^{+}$ point out $\mathbbm{B}_0, \mathbbm{B}_1$ and $\mathbbm{B}_2$ respectively.

By Lemma \ref{lemma:1bbdisksectors}, the disk sectors $S_1, S_2, \ldots, S_{w-2}$ are not half sink disks. $S_{w-1}$, $S_{w-2}$, and $\mathbbm{B}_{1}$ are part of the same branch sector; we already determined $\mathbbm{B}_{1}$ is not a half sink disk. Finally, $\widehat{\alpha}_{b+1}^{+}$ points out of $S_w$, and $B$ is sink disk free. \\

\newpage
\item $b=w-2$, $b \equiv w \equiv 1 \mod 2$

No band sectors are half sink disks: $\widehat{\alpha}_b^{-}$ and $\widehat{\alpha}_{b+w-1}^{+}$ point out of $\mathbbm{B}_0$ and $\mathbbm{B}_2$ respectively. $\mathbbm{B}_1$ and $\mathbbm{B}_{2}$ are in the same branch sector, so $\mathbbm{B}_2$ is not a half sink disk.

By Lemma \ref{lemma:1bbdisksectors}, the Seifert disks $S_1, S_2, \ldots, S_{w-1}$ are not half sink disks. $S_w$ and $\mathbbm{B}_{2}$ are in the same branch sector. $B$ is sink disk free.\\

\item $b < w-2$

$\widehat{\alpha}_b^{-}$ and $\widehat{\alpha}_{b+w-1}^{+}$ point out of $\mathbbm{B}_0$ and $\mathbbm{B}_{2}$ respectively. Either $\widehat{\alpha}_{b+1}^{-}$ (if $w \equiv 0 \mod 2$) or $\widehat{\alpha}_{b+2}^{-}$ (if $w \equiv 1 \mod 2$) points out of  $\mathbbm{B}_{1}$. No band sectors are half sink disks.

If $b \equiv 0 \mod 2$, then by Lemma \ref{lemma:1bbdisksectors}, $S_1, S_2, \ldots, S_{b}$ are not half sink disks. Every even Seifert disk $S_{i}$ with $i \geq b+2$ contains an image arc cusped via $(\rightarrow)$. $S_{b+1}$ is in the same branch sector as $S_1$. All other Seifert disks $S_i, i \geq b+3$ are in the same branch sector as $\mathbbm{B}_1$, which we know has an outwardly cusped arc. B is sink disk free. 

Alternatively, if $b \equiv 1 \mod 2$, then by Lemma \ref{lemma:1bbdisksectors}, $S_1, S_2, \ldots, S_{b+1}$ are not half sink disks. Every even Seifert disk $S_{i}, i \geq b+3$ contains an image arc cusped via $(\rightarrow)$. Every odd Seifert disk $S_{i}, i \geq b+2$ is in the same branch sector as $S_1$. $B$ is sink disk free.
\end{itemize}

Consider a 1-bridge braid with $t \geq 3$. Every odd Seifert disk $S_i$ contains arcs cusped via both $(\leftarrow)$ and $(\rightarrow)$. If $w$ is even (resp. odd), the proof of Lemma \ref{lemma:1bbdisksectors} guarantees $S_1, \ldots, S_w$ (resp. $S_1, S_2, \ldots, S_{w-1}$) are not half sink disks. If $w$ is odd, $S_{1}$ and $S_{w}$ will be in the same disk sector. We conclude no disks sectors are half sink disks. 

Finally, we verify no band sectors are sink disks: $\widehat{\alpha}_b^-$ points out of $\mathbbm{B}_0$. For each $2 \leq i \leq t$, $\widehat{\alpha}_{b+(i-1)(w-1)}^-$ points out of $\mathbbm{B}_{i}$. We need only confirm $\mathbbm{B}_1$ is not a half sink disk. If $b < w-2$, $\widehat{\alpha}_{b+2}^-$ points out of $\mathbbm{B}_1$ (if $w$ is odd) or $\widehat{\alpha}_{b+1}^-$ does (if $w$ is even). If $b=w-2$ and $w \equiv 1 \mod 2$, then $\mathbbm{B}_1$ and $S_w$ are in the same branch sector; we know $\mathbbm{B}_1$ is not a half sink disk. If $b=w-2$ and $w \equiv 0 \mod 2$, then $\widehat{\alpha}_{b+1}^-$ points out of $\mathbbm{B}_1$. We conclude $B$ is sink disk free. 
\end{proof}

\begin{figure}[h!]
\includegraphics[scale=.6]{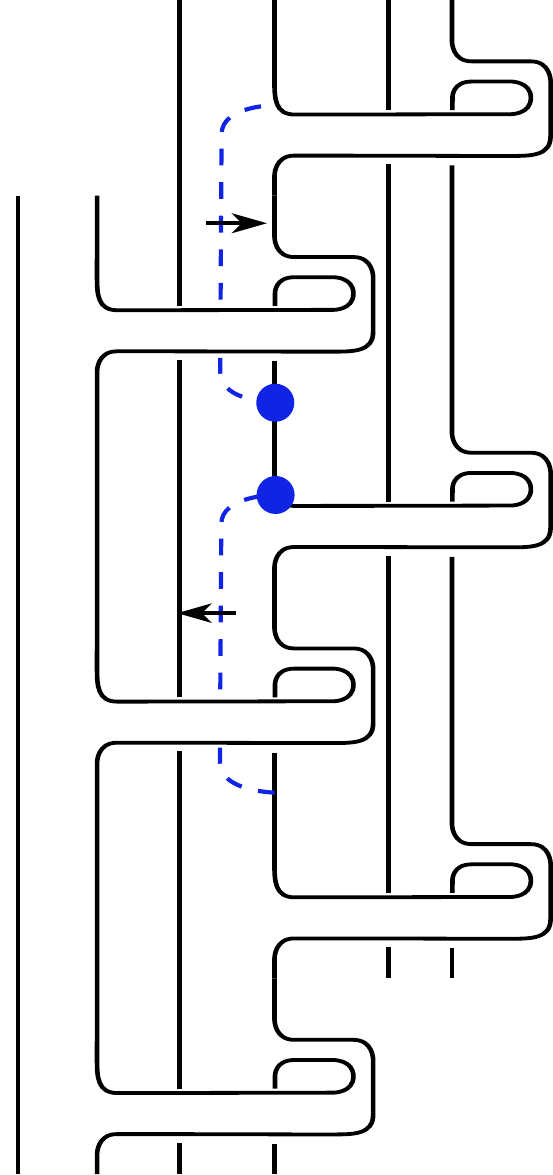}
\caption{These cusping instructions for $\alpha_{m}^{-}$ and $\alpha_{m+w-1}^{-}$ yield a linked pair.}
\label{fig:1BB_NoLinkedArcs}
\end{figure}

\begin{lemma} \label{lemma:1BB_nolinkedarcs}
The train track $\tau$, induced by $B$, admits no linked pairs of arcs.
\end{lemma}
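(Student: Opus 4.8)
The plan is to show that the cusping scheme of Proposition~\ref{prop:1BB_branchedsurface} never realizes the one configuration that can produce a linked pair: two consecutive bands of the same type cusped ``$(\rightarrow)$ then $(\leftarrow)$'', the $1$-bridge analogue of the $(\rightarrow\,\leftarrow)$ case of Lemma~\ref{lemma:linkingarcs}. Figure~\ref{fig:1BB_NoLinkedArcs} records exactly this pitfall for the pair $\alpha_m^-$, $\alpha_{m+w-1}^-$, and the scheme is designed to avoid it.

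\emph{Step 1: reduce to consecutive same-type bands.} After the spinal isotopy in the proof of Proposition~\ref{prop:1BB_branchedsurface}, each $\alpha_j^-$ is a (possibly obstructed) co-core of a band $\mathbbm{b}_j$ lying in an odd Seifert disk, and the monodromy $\varphi$ carries $\alpha_j^-$ onto $\alpha_j^+$, the co-core of the \emph{next} band of the same type, which sits directly below it in the next horizontal slice. Hence the product disk $D_j$, and with it the two sectors of $\tau$ traced by the endpoints of $\alpha_j^-$ under $\varphi$, is supported in just two consecutive horizontal slices $\mathbbm{h}_s$, $\mathbbm{h}_{s+1}$. Two disks can therefore be linked only if the bands they come from lie in a common slice or in adjacent slices; bands in a common slice have distinct types (each $\sigma_i$ occurs once per slice) and their maximal sectors lie over disjoint arcs of $\lambda$, so the only candidate linked pairs are $\alpha_m^-$ and $\alpha_{m+w-1}^-$ associated to bands of the same odd type $\sigma_i$ in consecutive slices. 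Making this ``disjoint support'' claim precise — carefully locating the $\varphi$-images of the relevant boundary points, slice by slice — is the step I expect to be the main obstacle.

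\emph{Step 2: enumerate the cuspings.} Fix a candidate pair $\mathbbm{b}_m\subset\mathbbm{h}_s$, $\mathbbm{b}_{m+w-1}\subset\mathbbm{h}_{s+1}$ of common odd type $\sigma_i$. If $\mathbbm{h}_{s+1}=\mathbbm{h}_t$, then $\mathbbm{b}_{m+w-1}$ is uncusped and there is no disk $D_{m+w-1}$, so no pair. Otherwise both are cusped, and since the first occurrence of $\sigma_i$ in $\beta$ (in $\mathbbm{h}_0$ if $i\le b$, in $\mathbbm{h}_1$ if $i>b$) is cusped $(\leftarrow)$ while every later occurrence is cusped $(\rightarrow)$, the pair is cusped either $(\leftarrow)(\rightarrow)$ — when $\mathbbm{h}_s$ is the first slice containing $\sigma_i$ — or $(\rightarrow)(\rightarrow)$. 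Reading $\beta$ from top to bottom, the unique $(\leftarrow)$-cusped occurrence of $\sigma_i$ precedes every $(\rightarrow)$-cusped one, so the pattern $(\rightarrow)(\leftarrow)$ — the one producing a linked pair, by Lemma~\ref{lemma:linkingarcs} and Figure~\ref{fig:1BB_NoLinkedArcs} — never occurs.

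\emph{Step 3: neither surviving pattern links.} Finally, adapting the local analysis of Lemma~\ref{lemma:linkingarcs} to the $1$-bridge fiber surface (Figures~\ref{fig:1BB_horizontal_both} and~\ref{fig:1BB_NoLinkedArcs}), one checks directly that for both $(\leftarrow)(\rightarrow)$ and $(\rightarrow)(\rightarrow)$ the maximally contributing endpoints (Definition~\ref{defn:endpoints}) of $\alpha_m^-$ and $\alpha_{m+w-1}^-$ are unlinked in $\lambda$ in the sense of Definition~\ref{defn:linked}: traversing $K=\partial F$ from just past the maximal endpoint of the upper band, one meets the maximal endpoint of the lower band and the images of both before encountering any other maximally contributing endpoint, so the four points appear nested rather than interleaved. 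Since by Step~1 these are the only candidate pairs and by Steps~2--3 none of them is linked, $\tau$ admits no linked pair of arcs — indeed its arcs are pairwise unlinked — which is what the sequel needs to push the carried slopes up to $g(K)$.
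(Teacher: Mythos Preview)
Your proposal is correct and follows essentially the same approach as the paper: reduce to consecutive same-type arcs in odd Seifert disks, observe that the only linking pattern is $(\rightarrow)(\leftarrow)$, and note the cusping scheme never produces it. The paper's proof is far terser---it simply asserts the reduction and that the scheme avoids the bad pattern---so your Steps~2--3 (explicitly checking that $(\leftarrow)(\rightarrow)$ and $(\rightarrow)(\rightarrow)$ are unlinked, which Lemma~\ref{lemma:linkingarcs} did not cover) and your honest flagging of the boundary-tracing in Step~1 make explicit what the paper leaves to the reader.
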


\begin{proof}
All arcs $\alpha_{j}^{-}$ contributing maximally to $\tau$ lie in odd Seifert disks $S_i$. Therefore, the only way to produce a linked pair of arcs is if $\sigma_{m}^{-}$ and $\sigma_{m+w-1}^{-}$ are cusped via $(\rightarrow)$ and $(\leftarrow)$ respectively, as in Figure \ref{fig:1BB_NoLinkedArcs}. Our cusping directions avoid these instructions.
\end{proof}

\begin{defn} \label{defn:NumberCuspedArcs}
Let $K$ be a 1-bridge braid, and $B$ the sink disk free branched surface built in Proposition \ref{prop:1BB_branchedsurface}. Define $\Gamma$ to be the number of product disks used to build $B$. 
\end{defn}

\begin{lemma} \label{lemma:1BB_tau}
The induced train track $\tau$ carries all rational slopes in $(-\infty, g(K))$. 
\end{lemma}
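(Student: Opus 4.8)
The plan is to mirror the slope-counting argument from Section~\ref{subsection:step5a}, now applied to the branched surface $B$ of Proposition~\ref{prop:1BB_branchedsurface}. First I would recall that, exactly as in the 3-braid case, the boundary train track $\tau$ always carries slope $0$, since $B$ is built by fusing product disks to $F\times\{\frac12\}$; each product disk $D_j$ contributes a pair of sectors to $\tau$, one of which carries $[0,1)$ and the other $(-\infty,0]$, and precisely one endpoint of each $\alpha_j^-$ contributes maximally to $\tau$ (Definition~\ref{defn:endpoints}). By Lemma~\ref{lemma:1BB_nolinkedarcs}, there are \emph{no} linked pairs of arcs, so the naive count is not corrected downward: a sub-train-track $\tau'$ consisting of all $\Gamma$ maximally-contributing sectors, together with one extra sector $s$ for which $\lambda\cup s$ carries $(-\infty,0]$, carries all rational slopes in $(-\infty,\Gamma)$. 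By Lemma~\ref{lemma:subtraintrackcarries}, every such slope is carried by $\tau$ as well.

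The heart of the proof is therefore the bookkeeping identity $\Gamma = g(K)$, where $\Gamma$ is the number of product disks used (Definition~\ref{defn:NumberCuspedArcs}). I would compute this directly from the cusping recipe in Proposition~\ref{prop:1BB_branchedsurface}. The fiber surface $F$ for $K(w,b,t)$ is built from $w$ Seifert disks and $b + t(w-1)$ bands, so $\chi(F) = w - b - t(w-1)$ and hence $2g(K) = 1 - \chi(F) = b + t(w-1) - w + 1 = (b-1) + (t-1)(w-1)$, giving $g(K) = \tfrac12\bigl((b-1) + (t-1)(w-1)\bigr)$. On the other side, $\Gamma$ is the number of letters $\sigma_i$ of $\beta$ that receive a $(\leftarrow)$ or $(\rightarrow)$ cusp, i.e.\ the total number of letters minus the number of uncusped letters. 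An occurrence of $\sigma_i$ is uncusped exactly when $i$ is even, or when $i$ is odd and the band belongs to $\mathbbm{h}_t$ (the last horizontal slice). I would count: among the $b$ letters of the bridge subword $\sigma_b\cdots\sigma_1$ and each of the $t$ copies of $\sigma_{w-1}\cdots\sigma_1$, the even-indexed letters are uncusped, and in the final copy (forming $\mathbbm{h}_t$) the odd-indexed letters are uncusped as well, so that whole slice contributes $0$ cusped arcs. Carrying this out case-by-case on the parities of $b$ and $w$ yields, after simplification, $\Gamma = \tfrac12\bigl((b-1) + (t-1)(w-1)\bigr) = g(K)$; I expect the parity case analysis (four combinations of $b,w\bmod 2$) to be the main obstacle, since one must be careful that the counts of even indices in a string $\{1,\dots,m\}$ depend on the parity of $m$, and that the ``destabilization'' subtleties near $\mathbbm{h}_t$ do not double-count a shared band.

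Finally, I would assemble the pieces: since $B$ is sink disk free (Proposition~\ref{prop:1BB_branchedsurface}) and, by the argument of Proposition~\ref{prop:laminarbranchedsurface} adapted as in Remark~\ref{rmk:compact_surface_type_A} if needed, laminar, and since $\tau$ carries all rational $r < \Gamma = g(K)$, appending to $\tau'$ the extra $(-\infty,0]$-carrying sector $s$ upgrades the carried interval from $[0,g(K))$ to $(-\infty,g(K))$. Applying Lemma~\ref{lemma:subtraintrackcarries} then gives that $\tau$ carries all of $(-\infty,g(K))\cap\Q$, which is the statement of Lemma~\ref{lemma:1BB_tau}. (The subsequent application of Theorem~\ref{thm:taolisinkdisk} and Proposition~\ref{prop:extendlamination} to produce the taut foliations in $X_K$ and $S^3_r(K)$ is then identical to the 3-braid case and belongs to the proof of Theorem~\ref{thm:1bridgebraids} rather than to this lemma.)
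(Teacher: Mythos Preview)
Your strategy is exactly the paper's: invoke Lemma~\ref{lemma:1BB_nolinkedarcs} to rule out linked pairs, count the product disks $\Gamma$, and conclude that a sub-train-track $\tau'$ (with one extra negatively-contributing sector appended) carries all rational slopes in $(-\infty,\Gamma)$, hence so does $\tau$ by Lemma~\ref{lemma:subtraintrackcarries}.

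There are, however, two arithmetic slips in your bookkeeping. First, your simplification $b + t(w-1) - w + 1 = (b-1) + (t-1)(w-1)$ is off by $1$: the left side is $b+tw-t-w+1$ while the right side is $b+tw-t-w$. The correct genus is $g(K)=\tfrac12\bigl((w-1)(t-1)+b\bigr)$. Second, and more importantly, the identity $\Gamma = g(K)$ that you aim for is \emph{not} what falls out of the parity case analysis. Carrying it through (number of odd indices in $\{1,\dots,w-1\}$ is $w/2$ when $w$ is even and $(w-1)/2$ when $w$ is odd; similarly for the bridge subword) yields four values of $\Gamma$ depending on the parities of $w$ and $b$, and equality with $g(K)$ holds only when $w$ is odd and $b$ is even. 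In the remaining admissible cases one gets $\Gamma > g(K)$ (for instance $\Gamma - g(K) = (t-1)/2$ when $w,b$ are both even). The paper accordingly proves only the inequality $\Gamma \ge g(K)$, which is all that is needed for the conclusion; once you replace your claimed equality by this inequality, your argument goes through unchanged and matches the paper's.
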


\begin{proof}
By Lemma \ref{lemma:1BB_nolinkedarcs}, we have no linked arcs; therefore, we need only count the total number of product disks $\Gamma$ used to build $B$, and verify $\Gamma \geq g(K)$. It is straightforward to compute the genus of any 1-bridge braid $K$:
\begin{align*}
\chi(F) &= w - ((w-1)t + b) \implies g(K)= \frac{-\chi(F)+1}{2} = \frac{wt-w-t+b+1}{2}
\end{align*}

The value of $\Gamma$ depends on the parity of $w$ and $b$; we analyze the 4 possible cases below:
\begin{center}
\begin{tabular}{c|c|ll}
parity of $w$ & parity of $b$ & \qquad \qquad \qquad \qquad \qquad $\Gamma$  \\
\hline
\hline 
& & \\
even & even & \multirow{2}{*}{$\displaystyle \frac{(t-1)w}{2} + \frac{b}{2}= \frac{wt + b - w}{2}$} \\ 
& & \\
& & \\
\hline 
& & \\
even & odd &  \multirow{2}{*}{$\displaystyle \frac{(t-1)w}{2} + \frac{b+1}{2} = \frac{wt-w+b+1}{2}$} \\ 
& & \\
& & \\
\hline 
& & \\
odd & even & $\displaystyle \frac{(w-1)(t-1)}{2} + \frac{b}{2} = \frac{wt-w-t+b+1}{2}$ \\
& & \\
\hline
& & \\
odd & odd & $\displaystyle \frac{(w-1)(t-1)}{2} + \frac{b+1}{2} = \frac{wt-w-t+b+2}{2}$ \\
& & \\
\end{tabular}
\end{center}

In each case above, $\Gamma \geq g(K)$. Including both sectors of $\tau$ induced by $\alpha_b$ yields a sub-train track $\tau'$ carrying all slopes in $(-\infty, g(K))$. Therefore, for any $K$, the train track $\tau$ induced by the branched surface $B$ carries all rational slopes $r < g(K)$.
\end{proof}

\noindent \textit{Proof of Theorem \ref{thm:1bridgebraids}}. 
By Proposition \ref{prop:1BB_branchedsurface}, for any 1-bridge braid $K \subset S^3$, there exists a sink disk free branched surface $B \subset X_K$. We want to prove that $B$ is laminar by applying Proposition \ref{prop:laminarbranchedsurface}. However, we need to modify the proof of said proposition to show that $B$ cannot fully carry an annulus (our proof of (3) in Proposition \ref{prop:laminarbranchedsurface} relied on a local model that does not apply to 1-bridge braids).

This is straightforward. The cusping directions provided in Proposition \ref{prop:1BB_branchedsurface} focuses our attention to the first Seifert disk; see Figure \ref{fig:1BB_compact_surfaces}. Let the weights of the disk sectors $S_1$ and $S_2$ be $w_1$ and $w_2$ respectively, the weight for the horizontal slice $\mathbbm{h}_1$ is $w_3$, and the weight of the isotoped product disks associated to the first two occurrences of $\alpha_1$ are $w_4$ and $w_5$ respectively. The switch relations for a branched surface to carry a compact surface imply the following:
\begin{align*}
w_1 = w_2 + w_4 \qquad w_3 = w_1 + w_5 \qquad w_3 = w_2 + w_4
\end{align*}
This implies that $w_3= w_1$, thus $w_5=0$. This contradicts that $S$ is fully carried by $B$. We conclude that $B$ cannot carry any compact surface, and therefore does not carry an annulus. Thus, $B$ is laminar.

\begin{figure}[h!]
\labellist
\pinlabel {$w_1$} at 22 200
\pinlabel {$w_2$} at 74 200
\pinlabel {$w_3$} at 75 51
\pinlabel {$w_4$} at 75 140
\pinlabel {$w_4$} at 50 182
\pinlabel {$w_5$} at 45 82
\endlabellist
\includegraphics[scale=1]{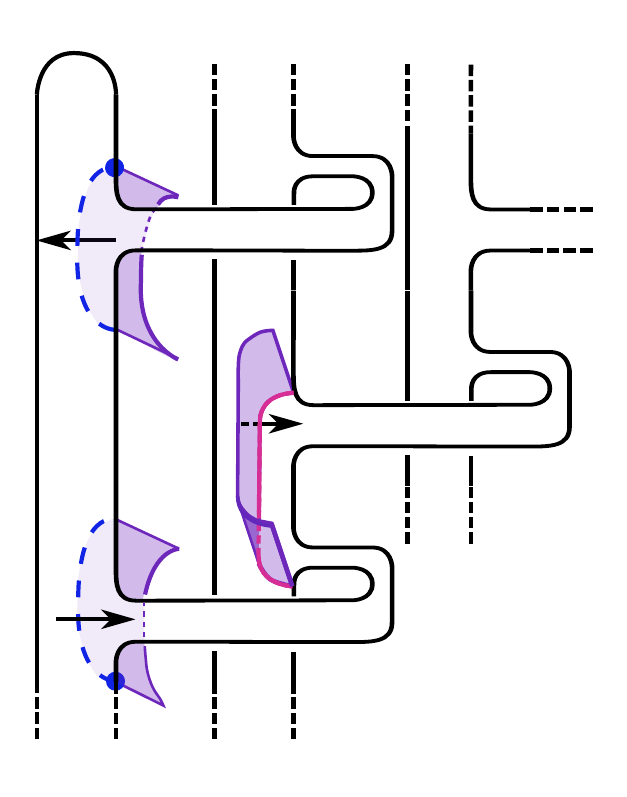}
\caption{A local picture of the branched surface near the first Seifert disk $S_1$. For simplicity, the first product disk appears broken in our figure; it has weight $w_4$. We do not see all of the second product disk, which has weight $w_5$. The standard switch relations induced by the branch loci indicate that $w_5=0$. Thus $B$ cannot carry a compact surface.}
\label{fig:1BB_compact_surfaces}
\end{figure}

By Lemma \ref{lemma:1BB_tau}, the boundary train track $\tau$ carries all rational slopes $r<g(K)$. Applying Theorem \ref{thm:taolisinkdisk} yields a family of essential laminations $\mathcal{L}_r$ carried by $B$, where $r < g(K)$. Proposition \ref{prop:extendlamination} extends each essential lamination $\mathcal{L}_r$ to a taut foliation $\mathcal{F}_r$ meeting $\partial X_K$ in simple closed curves of slope $r$. Performing $r$-framed Dehn filling produces $S_r^3(K)$ endowed with a taut foliation. \hfill $\Box$

\newpage
\bibliographystyle{amsalpha2}
\bibliography{../../masterbiblio}

\end{document}